\tikzstyle{vertex}=[circle, draw, inner sep=0pt, minimum size=6pt]
\tikzset{
  treenode/.style = {align=center, inner sep=0pt, text centered,
    font=\sffamily},
  arn_n/.style = {treenode, circle, black, text width=1em},
  arn_r/.style = {treenode, circle, blue, draw=blue, 
    text width=1em, very thick}}
\theoremstyle{plain}
\newtheorem{theorem}{Theorem}[section]
\newtheorem{lemma}[theorem]{Lemma}
\newtheorem{corollary}[theorem]{Corollary}
\newtheorem{proposition}[theorem]{Proposition}
\theoremstyle{definition}
\newtheorem{defi}[theorem]{Definition}
\newcommand{\C}{\mathbb{C}}
\newcommand{\D}{\mathbb{D}}
\newcommand{\N}{\mathbb{N}}
\newcommand{\R}{\mathbb{R}}
\newcommand{\ds}{\displaystyle}
\newcommand{\Lip}{\mathcal{L}}
\renewcommand{\mod}[1]{\left|#1\right|}
\newcommand{\calB}{\mathcal{B}}
\newcommand{\calF}{\mathcal{F}}
\newcommand{\calM}{\mathcal{M}}
\newcommand{\dist}{\operatorname{d}}
\newcommand{\parent}[1]{\operatorname{par}({#1})}
\newcommand{\nparent}[2]{\operatorname{par}^{{#2}}({#1})}
\newcommand{\Chi}[1]{{\operatorname{Chi}}({#1})}
\newcommand{\nChi}[2]{{\operatorname{Chi}}^{#2}({#1})}
\newcommand{\Lp}{\mathbf L^p}
\renewcommand\root{\operatorname{o}}
\begin{document}
\date{\today}

\title[Shifts on $\Lip$] 
 {The Forward and Backward Shift on the Lipschitz Space of a Tree}

\author[R.~A. Mart\'inez-Avenda\~no]{Rub\'en A. Mart\'inez-Avenda\~no}
\address{Departamento Acad\'emico de Matem\'aticas \\ Instituto
  Tecnol\'ogico Aut\'onomo de M\'exico \\ Mexico City, Mexico}
\email{rubeno71@gmail.com}
\author[E. Rivera-Guasco]{Emmanuel Rivera-Guasco}
\address{Centro de Investigaci\'on en Matem\'aticas \\ Guanajuato, Mexico}
\email{emmanuel.rivera@cimat.mx}
\subjclass[2010]{47A16, 47B37, 05C05, 05C63}
\keywords{Hypercyclicity, Lipschitz space, trees, shifts}

\thanks{We would like to thank the
  reviewers for their suggestions, which greatly improved this
  paper. The first author's research is partially supported by the
  Asociaci\'on Mexicana de Cultura A.C}

\begin{abstract} 
We initiate the study of the forward and backward shifts on the
Lipschitz space of an undirected tree, $\Lip$, and on the little Lipschitz space of
an undirected tree, $\Lip_0$. We determine that the forward shift is bounded both
on $\Lip$ and on $\Lip_0$ and, when the tree is leafless, it is an
isometry; we also calculate its spectrum. For the backward shift, we
determine when it is bounded on $\Lip$ and on $\Lip_0$, we find the
norm when the tree is homogeneous, we calculate the spectrum for the
case when the tree is homogeneous, and we determine, for a general
tree, when it is hypercyclic.
\end{abstract}

\maketitle

\section{Introduction}\label{sec_intro}

In \cite{CoEa1}, Colonna and Easley introduced the Lipschitz space of
a tree, $\Lip$. This is the Banach space of complex-valued
functions on a rooted, countably infinite, locally finite and
undirected tree (from now on simply referred to as a tree) which are
Lipschitz functions, when the tree is endowed with the edge-counting
metric. This space may be considered as the discrete analogue of the
classical Bloch space: the space of functions $f:\D \to \C$ which are
Lipschitz when the unit disk $\D$ is given the hyperbolic or Bergman
metric (see, e.g., \cite{zhu}) and the set of complex numbers $\C$ is
given the usual Euclidean metric.

As it turns out, the Lipschitz space of the tree is, roughly speaking, the
space of funtions on the tree whose ``derivative'' remains bounded on
the tree. Therefore, there is also the little Lipschitz space,
$\Lip_0$, defined as the space of functions on the tree whose
``derivative'' tends to zero when far away from the root of the tree
(i.e.,  on the ``boundary'' of the tree).

The motivation for investigating spaces of functions on trees comes
mainly from harmonic analysis. Early studies of harmonic functions on
regular trees were done by Cartier in \cite{cartier1, cartier2}. Also, Cohen
and Colonna studied the Bloch space of harmonic functions on a
regular tree in \cite{CoCo1}, characterizing several properties of functions on this
space. Later, in \cite{CoCo2} Cohen and Colonna showed how to embed
certain homogeneous trees in the hyperbolic disk in a ``nice way'':
for example, in such a way that bounded harmonic functions on the disk
correspond to harmonic functions on the tree.

Several operators on the Lipschitz space of a tree have been
studied. For instance, in \cite{CoEa1}, Colonna and Easley characterize
boundedness of multiplication operators on $\Lip$ and on $\Lip_0$, as
well as establishing other operator-theoretical properties of such
operators. In \cite{ACE4}, Allen, Colonna and Easley study properties
of the composition operators on the Lipschitz space of a tree. There
have also been several studies of multiplication and other operators
defined on $\Lip$ and on other Banach spaces on trees \cite{ACE1,
  ACE2, ACE3, CoEa2, CoMA1, CoMA2}.

The shift operators (both the forward and backward shifts) on $\ell^p$
have been studied for a long time. There are several reasons why
researchers have been interested in shift operators: one of them is
that they provide a wealth of examples and counterexamples in operator
theory (see, e.g., \cite{shields}). In \cite{JJS},
Jab{\l}o\'nski, Jung and Stochel initiated the study of shifts on
directed trees. In their paper, they investigate several operator
theoretic properties of weighted (forward) shifts on the $L^2$ space of an
infinite directed tree. Later, in \cite{MA}, the first author defined
(inspired by a result in \cite{JJS}) the backward shift operator on a
weighted $L^p$ space of a directed tree and characterized its hypercyclicity.

The study of hypercyclic operators goes as far back as the papers of
Birkhoff~\cite{birkhoff} and MacLane~\cite{maclane}, but the first
example of a hypercyclic operator on a Banach space was given by
Rolewicz~\cite{rolewicz}: it is a multiple of the backward shift on
$\ell^p$. For the basic definitions and the history of hypercycicity,
we recommend the texts \cite{GEP} and \cite{BaMa}.

The purpose of this paper is to introduce the study of the forward and
backward shift operator on the Lipschitz space $\Lip$ and on the little
Lipschitz space $\Lip_0$. The paper is organized as follows. After
giving the basic definitions and notations we will use throughout
this paper in Section \ref{sec_prelim}, we define the forward and
backward shifts in Section \ref{sec_forward_shift}. We observe that
the forward shift is always an isometry, when the tree is leafless,
and find its spectrum when it acts on $\Lip$ and on $\Lip_0$. Also, we
establish that the backward shift is the adjoint operator of the
forward shift. In Section \ref{sec_backward_shift}, we give a
sufficient and necessary condition to ensure that the backward shift
is bounded: it turns out the backward shift is bounded exactly when
the tree is homogeneous by sectors. We summarize this characterization
in Theorem~\ref{th:main}, and we also find some lower estimates for
the norm. Later, in Section \ref{sec_norm},
we find the value of the norm of the backward shift and the value of
the norm for powers of the backward shift, in the case of
homogeneous trees. In Section \ref{sec_spectrum}, we obtain the
spectrum for the backward shift in the case where the tree is
homogeneous, both for $\Lip$ (Theorem~\ref{th:sigmaB_L}) and for
$\Lip_0$ (Theorems~\ref{th:sigmaB_L0} and~\ref{th:sigmap_L0}). Lastly, in Section
\ref{sec_hyper}, we establish that the forward shift can never be
hypercyclic, but the backward shift is hypercyclic exactly when the tree has no free ends
(Theorem~\ref{th:main_hyper}): this result is analogous to the one found in
\cite{MA}.

\section{Preliminaries}\label{sec_prelim}

As is customary, $\N$, $\N_0$, $\R$, $\C$ and $\D$ will denote the set of
natural numbers, the set of nonnegative integers, the set of real
numbers, the set of complex numbers, and the open unit disk in $\C$
centered at the origin, respectively.

Recall that a {\em graph} $G=(V,E)$ consists of a nonempty set of {\em vertices} $V$
and a set of {\em edges} $E \subseteq \left\{ \{ u, v \} \, : \, u, v \in V, u \neq v
\right\}$. In this paper, the set of vertices $V$ will always be countably
infinite. If $\{u,v \} \in E$, we say that $u$ and $v$ are {\em adjacent} and we
denote this by $u \sim v$. For each $u \in V$, the {\em degree} of
$u$, denoted by $\deg(u)$, is the number of vertices adjacent to
$u$. In this paper, all of our graphs will be {\em locally finite}; i.e.,
$\deg(u) < \infty$ for every $u \in V$.

A {\em path of lenght $n$} joining two vertices $u$ and $v$ is a
finite sequence of $n+1$ distinct vertices $u=u_0 \sim u_1 \sim u_2
\sim \cdots \sim u_n=v$. A graph is a {\em tree} if for each pair of
vertices there is one and only one path between them. In this paper, for a tree $T$ we will
denote its set of vertices also by the letter $T$, which should cause no confusion.

Every tree $T$ we consider here has a distinguished vertex, which we call
the {\em root} of $T$ and denote by $\root$. For a tree $T$, we denote
by $\dist(u,v)$ the length of the unique path between the vertices $u,
v \in T$. For $v \in T$ we use the notation $|v|:=\dist(\root,v)$. We also
denote by $T^*$ the set of all vertices minus the root; i.e., $T^*:=\{
v \in T \, : \, |v|\geq 1 \}$. Ocasionally, we will denote by $T^{**}$
the set $\{ v \in T \, : \, |v|\geq 2 \}$.

For each $v \in T^*$, we define the {\em parent} of $v$, denoted by
$\parent{v}$, as the unique vertex $w$ in the path from $\root$ to $v$
with $|w|=|v|-1$. Observe that every vertex in $T$ has a parent,
except for the root $\root$. Inductively, for $n \in \N$, we define the {\em
  $n$-parent} of $v$, denoted by $\nparent{v}{n}$, as follows:
$\nparent{v}{1}:=\parent{v}$ if $v \neq \root$, and for $n\geq 2$, we set
$\nparent{v}{n}:=\parent{\nparent{v}{n-1}}$, if $v$ has a
$(n-1)$-parent and $\nparent{v}{n-1} \neq \root$. The set of all
vertices that have $n$-parents is denoted by $T^n$.

{\bf Note:} {\em We should point out that with the above definitions we
are giving the tree a directed structure, in the manner of the
definitions in \cite{JJS}: each edge can be thought of
as a directed edge going to a vertex $v \in T^*$ from its parent
$\parent{v}$. Also, we have chosen a fixed vertex and we called it a
root, which has no parent and thus coincides with the definition of a
root in \cite{JJS} (and hence it is unique). We choose not to follow
this point of view in this work, since the spaces we study ahead, were
originally defined on undirected trees.}

If $w$ is the parent of $v$, we say that $v$ is a {\em child} of
$w$ and we denote the set of all children of $w$ by $\Chi{w}$. If $w$ is
the $n$-parent of $v$ we say that $v$ is an {\em $n$-child} of $w$ and we
denote the set of all $n$-children of $w$ by $\nChi{w}{n}$. For a
vertex $v$, we denote by $\gamma(v)$ the number of children it has;
i.e., $\gamma(v)$ is the cardinality of $\Chi{v}$. Also, $\gamma(v,n)$
is the number of $n$-children of $v$; i.e., $\gamma(v,n)$ is the
cardinality of $\nChi{v}{n}$. We will say a tree is {\em homogeneous
  of order $\gamma$} if $\gamma(v)=\gamma$ for all $v\in T$ (this differs a bit from
the use of the term in the literature). 

If a vertex $v \in T$ satisfies that $\gamma(v)=0$ (i.e., $v$ has no
children) we will say that $v$ is a {\em leaf} of $T$. A tree with no
leaves will be called {\em leafless}. Observe that in a leafless tree,
every vertex is the parent of some other vertex.

Lastly, for every $v \in T$, we denote by $S_v$ the {\em sector}
determined by $v$, which consists of $v$ and all its $n$-children;
i.e., $S_v:= \bigcup_{n=0}^\infty \nChi{v}{n}$, where we will agree
that $\nChi{v}{0}=\{v\}$ and $\nChi{v}{1}=\Chi{v}$. Sometimes we will
refer to a sector as a {\em subtree}.

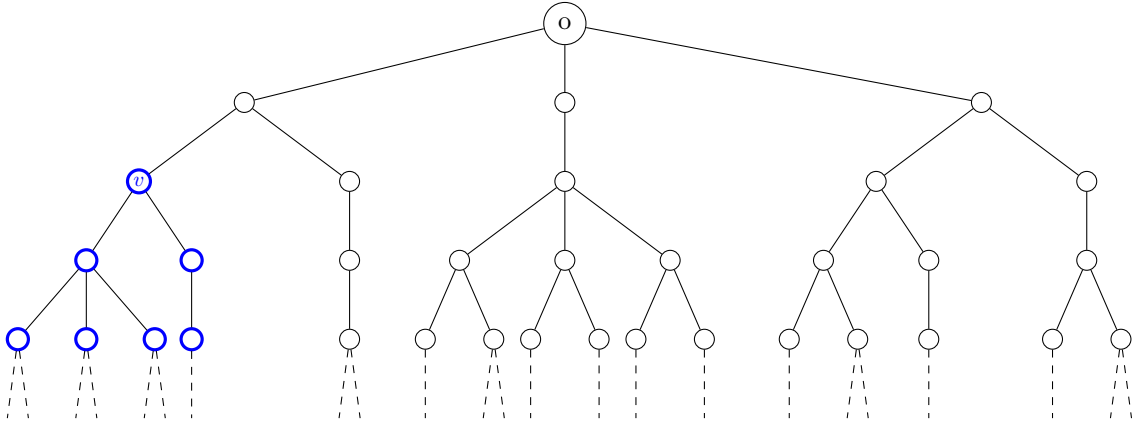
\begin{figure}[!htbp]
\centering
\begin{tikzpicture}[level 1/.style={sibling distance=7cm}, level
  2/.style={sibling distance=4cm}, level 3/.style={sibling
    distance=2cm}, level 4/.style={sibling distance=1.3cm}, level
  5/.style={sibling distance=0.4cm}, emph/.style={edge from parent/.style={black, dashed,draw}},
  scale=0.7]
  \node[draw,circle, scale=1] {$\root$} [grow'=down] 
child{node[draw,circle, scale=.8,right=0.5cm]  {} 
    child{node[draw,circle, scale=.8] {}
      child{node[draw,circle, scale=.8] {}
        child{node[draw,circle, scale=.8] {}
          child[emph]{}
          child[emph]{}}
        child{node[draw,circle, scale=.8] {}
                child[emph]{}}
              }
            }
        child{node[draw,circle, scale=.8] {}
           child{node[  draw,circle, scale=.8] {}
             child{node[  draw,circle, scale=.8] {}
               child[emph]{}}                 
                 }
             child{node[  draw,circle,
               scale=.8] {}
               child{node[  draw,circle, scale=.8] {}
                 child[emph]{}
                 child[emph]{}}
               child{node[  draw,circle, scale=.8] {}
                 child[emph]{}}                 
               }
            }}
child {node[draw,circle, scale=.8]  {} 
    child {node[draw,circle, scale=.8]{}
           child{node[  draw,circle, scale=.8] {}
               child{node[  draw,circle, scale=.8] {}
                 child[emph]{}}
               child{node[  draw,circle, scale=.8] {}
                 child[emph]{}}                 
                 }
              child{node[  draw,circle, scale=.8] {}
               child{node[  draw,circle, scale=.8] {}
                 child[emph]{}}
               child{node[  draw,circle, scale=.8] {}
                 child[emph]{}}                  
               }
             child{node[  draw,circle, scale=.8] {}
               child{node[  draw,circle, scale=.8] {}
                 child[emph]{}
                 child[emph]{}}
               child{node[  draw,circle, scale=.8] {}
                 child[emph]{}}                  
               }
            }
       }
child {node[draw,circle, scale=.8,right=0.5cm]  {}
  child{node[draw,circle, scale=.8] {}
    child{node[draw,circle, scale=.8] {} 
              child{node[draw,circle, scale=.8]{}
                child[emph]{}
                child[emph]{}
              }
            }
          }
    child{node[arn_r,draw,circle, scale=.8] {$v$} 
             child{node[arn_r, draw,circle, scale=.8] {}
             child{node[arn_r, draw,circle, scale=.8] {}
               child[emph]{}}                 
                 }
             child{node[arn_r, draw,circle, scale=.8] {}
               child{node[arn_r, draw,circle, scale=.8] {}
                 child[emph]{}
                 child[emph]{}}
               child{node[arn_r, draw,circle, scale=.8] {}
                 child[emph]{}
                 child[emph]{}}
               child{node[arn_r, draw,circle, scale=.8] {}
                 child[emph]{}
                 child[emph]{}}                 
            }
       }
     };
 \end{tikzpicture}
\caption{Sector at $v$: vertices in blue (and below).}
\end{figure}

Let $T$ be a countably infinite, locally finite tree. We denote by $\calF$ the set of all functions $f: T
\to \C$. In \cite{CoEa1}, Colonna and Easley define the Lipschitz
space of a tree as follows.

\begin{defi}
  The {\em Lipschitz space of $T$} is the set of all
  complex-valued Lipschitz functions on $T$; i.e., $f \in \calF$ is Lipschitz if
\[
\sup_{u\neq v} \frac{\mod{f(u)-f(v)}}{\dist(u,v)} < \infty.
\]
\end{defi}

In \cite{CoEa1}, Colonna and Easley show that
\[
\sup_{u\neq v} \frac{\mod{f(u)-f(v)}}{\dist(u,v)} = \sup_{v\in T^*} \mod{f(v)-f(\parent{v})}
\]
and hence the Lipschitz space consists of all functions for which
\[
\sup_{v\in T^*} \mod{f(v)-f(\parent{v})} < \infty.
\]
For $f \in \calF$, we define $f'$ as the function
\[
  f'(v)=\begin{cases}
    f(\root),& \text{ if } v = \root\\
    f(v)-f(\parent{v}),& \text{ if } v \neq \root.
  \end{cases}
\]
Thus $f$ is in the Lipschitz space if $f'$ is bounded. We denote
by $\Lip$ the set of all such functions endowed with the norm
\[
  \|f \|=\sup_{v \in T} |f'(v)|.
\]
Colonna and Easley showed in \cite{CoEa1} that $\Lip$, with an equivalent norm, is a
Banach space. We use the present norm, following \cite{CoMA1}, to make
some calculations cleaner.

The following lemma, which we will use later, can be easily obtained using the results in
\cite{CoEa1}. For the sake of completeness, we give the proof here (which is a slight
modification of the proof in \cite{CoEa1}) since we are using a
different norm.

\begin{lemma}\label{le:co_ea}
  Let $f \in \Lip$ and $v \in T$. Then
  \[
    |f(v)|\leq (|v|+1) \| f \|.
  \]
\end{lemma}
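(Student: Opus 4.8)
The plan is to write $f(v)$ as a telescoping sum along the geodesic from the root to $v$ and then use the triangle inequality together with the bound $|f'(w)|\le \|f\|$, which holds for every $w\in T$ by the definition of the norm.

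First I would dispose of the case $v=\root$: here $f(v)=f(\root)=f'(\root)$, so $|f(v)|=|f'(\root)|\le \|f\|=(|\root|+1)\|f\|$ since $|\root|=0$. Now suppose $|v|=n\ge 1$ and let $\root=v_0\sim v_1\sim\cdots\sim v_n=v$ be the unique path joining $\root$ and $v$, so that $v_{k-1}=\parent{v_k}$ and $|v_k|=k$ for $1\le k\le n$. Telescoping gives
\[
f(v)=f(\root)+\sum_{k=1}^{n}\bigl(f(v_k)-f(\parent{v_k})\bigr)=f'(\root)+\sum_{k=1}^{n}f'(v_k).
\]
Applying the triangle inequality and bounding each of the $n+1$ summands by $\|f\|=\sup_{w\in T}|f'(w)|$ yields $|f(v)|\le (n+1)\|f\|=(|v|+1)\|f\|$, as desired.

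Alternatively, one could argue by induction on $|v|$: the base case $|v|=0$ is handled as above, and for the inductive step one writes $|f(v)|\le |f(\parent{v})|+|f(v)-f(\parent{v})|=|f(\parent{v})|+|f'(v)|\le (|\parent{v}|+1)\|f\|+\|f\|=(|v|+1)\|f\|$, using $|\parent{v}|=|v|-1$.

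There is no genuine obstacle here; the only point requiring a little care is the treatment of the root, where $f'(\root)$ is defined to equal $f(\root)$ rather than a difference. This is precisely what makes the telescoping sum have $n+1$ terms instead of $n$, and hence what produces the constant $|v|+1$ (rather than $|v|$) in the bound.
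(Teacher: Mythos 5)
Your proof is correct and follows essentially the same route as the paper's: both bound $f(v)$ by accumulating the increments $f'(v_k)$ along the unique path from the root, contributing one factor of $\|f\|$ per edge plus one for the root value. Your direct telescoping (exploiting that $f'(\root)=f(\root)$ is already included in the supremum defining $\|f\|$) streamlines away the paper's preliminary normalization to $f(\root)=0$ and $\|f\|=1$, but the underlying induction on $|v|$ is the same.
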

\begin{proof}
First, we claim that if $f \in \Lip$, $f(\root)=0$ and $\| f \| \leq
1$, then $|f(v)|\leq |v|$ for every $v \in T$. We prove this by
induction on $|v|$. It is clear that the claim is true for
$|v|=0$. Assume that the claim holds for all $v \in T$ with $|v|=n \in \N_0$, and let $w
\in T$ with $|w|=n+1$. Then
\[
  |f(w)| \leq |f(w)-f(\parent{w})| + |f(\parent{w})| =  |f'(w)| +
  |f(\parent{w})| \leq \| f \| + |\parent{w}| \leq 1 + n = |w|,
\]
which completes the induction step and finishes the proof of the claim.

Now, observe that the lemma is trivial if $f$ is the zero function. So
assume that $f$ is not identically zero. For the moment, assume $\|f
\|=1$. Define $g$ as $g(v)=f(v)-f(\root)$. Clearly, $g \in
\Lip$. Observe that $g'(v)=f'(v)$ for all $v \in T^*$ and
$g(\root)=g'(\root)=0$. Hence
\[
  \| g \| = \sup_{v\in T} |g'(v)| = \sup_{v\in T^*} |f'(v)| \leq \| f \| = 1,  
\]
so aplying the claim to $g$ we obtain
\[
|f(v)-f(\root)|=|g(v)|\leq |v|,
\]
for every $v \in T$. But from this we obtain
\[
|f(v)|\leq |f(v)-f(\root)|+ |f(\root)| \leq |v|+1,
\]
which proves the theorem for functions $f \in \Lip$ with $\| f
\|=1$. Now let $f$ be an arbitrary nonzero function in
$\Lip$. Applying the previous argument to $\frac{f}{\|f\|}$ we obtain
\[
\frac{|f(v)|}{\|f\|} \leq |v|+1,
\]
which finishes the proof.
\end{proof}

Also of interest is the {\em little Lipshitz space of $T$}, denoted by
$\Lip_0$, defined as the
set of all $f \in \calF$ for which
\[
  \lim_{|v|\to \infty} f'(v)=0.
\]
Clearly $\Lip_0$ is a subset of $\Lip$ and it can be shown
(see \cite{CoEa1}) that it is a separable closed subspace of $\Lip$.

In Section \ref{sec_hyper}, we will talk about hypercyclicity. Recall
that a bounded operator $A$ on a Banach space $\calB$ is hypercyclic if there
exists a vector $f \in \calB$ (called a hypercyclic vector for $A$) such that
the orbit of $f$ under $A$ is dense in the Banach space; i.e., the set
\[
\{ f, Af, A^2 f, A^3 f, \dots \}
\]
is dense in $\calB$. Clearly, if $A$ is hypercyclic, then $\calB$ must
be separable. Also observe that if $f$ is a hypercyclic vector, then
so is $A^n f$, for any $n \in \N$. Thus, if $A$ is hypercyclic, then
the set of its hypercyclic vectors is dense in $\calB$.

One way to prove
that an operator is hypercyclic is to apply the hypercyclicity
criterion. We include here the version we will use in this paper.

\begin{theorem}[Hypercyclicity Criterion]\label{th:hyp_cri}
Let $\calB$ be a separable Banach space and $A$ a bounded operator on
$\calB$. Assume there exists a set $X$, dense in $\calB$, and for each
$n \in \N$ there exists a function $R_n : X \to \calB$ such that, for every $f \in
X$ we have
\begin{itemize}
\item $A^n f \to 0$ as $n \to \infty$,
\item $R_n f \to 0$  as $n \to \infty$, and
\item $A^n R_n f \to f$  as $n \to \infty$.
\end{itemize}
Then $A$ is hypercyclic.
\end{theorem}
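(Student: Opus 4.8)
The plan is to deduce this from the Baire Category Theorem, which is the classical route and fits the present hypotheses exactly. Since $\calB$ is a separable Banach space, it is a complete separable metric space, so I may fix a countable base $\{V_j : j \in \N\}$ of nonempty open subsets of $\calB$. The starting point is the elementary observation that $f \in \calB$ is hypercyclic for $A$ if and only if its orbit meets each $V_j$, i.e.\ if and only if for every $j$ there is some $n \in \N_0$ with $A^n f \in V_j$; equivalently, the set of hypercyclic vectors is
\[
\operatorname{HC}(A) \;=\; \bigcap_{j \in \N} \; \bigcup_{n \in \N_0} A^{-n}(V_j).
\]
Since $A$ is bounded, each $A^n$ is continuous, so every set $U_j := \bigcup_{n \in \N_0} A^{-n}(V_j)$ is open. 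The whole proof therefore reduces to showing that each $U_j$ is \emph{dense}: Baire's theorem then gives that $\operatorname{HC}(A)$ is a dense $G_\delta$, in particular nonempty, and any vector in it is hypercyclic.

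The core step is thus the density of $U_j$ for fixed $j$, and this is exactly where the three hypotheses get used. Let $W$ be an arbitrary nonempty open set; I want to produce a point of $W$ lying in some $A^{-n}(V_j)$. Using that $X$ is dense, pick $g \in X \cap W$ and $h \in X \cap V_j$, and for $n \in \N$ set
\[
x_n := g + R_n h,
\]
which makes sense because $R_n$ maps $X$ into $\calB$. The hypothesis $R_n h \to 0$ gives $x_n \to g$, so $x_n \in W$ once $n$ is large; and from
\[
A^n x_n = A^n g + A^n R_n h
\]
together with $A^n g \to 0$ (as $g \in X$) and $A^n R_n h \to h$ (as $h \in X$), we get $A^n x_n \to h \in V_j$, so $A^n x_n \in V_j$ once $n$ is large. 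Choosing a single $n$ large enough for both conditions yields $x_n \in W \cap A^{-n}(V_j) \subseteq W \cap U_j$. As $W$ was arbitrary, $U_j$ is dense.

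Finally I would invoke the Baire Category Theorem for the complete metric space $\calB$ and the countable family of dense open sets $\{U_j\}_{j \in \N}$ to conclude that $\operatorname{HC}(A) = \bigcap_j U_j$ is dense and nonempty, and pick any $f \in \operatorname{HC}(A)$. I do not expect a genuine obstacle here: the only points that need a little care are the identification of $\operatorname{HC}(A)$ with the displayed countable intersection (immediate since $\{V_j\}$ is a base for the topology) and the fact that the density argument must meet two ``for all large $n$'' conditions at once, which is handled by taking the larger threshold. One could instead give a fully constructive proof, assembling a hypercyclic vector directly as a rapidly convergent series $f = \sum_{k} R_{n_k} u_k$ along a sufficiently sparse sequence $(n_k)$, with $(u_k)$ running over a countable dense subset of $X$ and each element repeated infinitely often; but the category argument above is shorter and cleaner, so that is the one I would present.
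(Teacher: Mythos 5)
Your proof is correct. The paper does not actually prove this criterion---it defers to \cite[p.~74]{GEP}---and your argument (writing the set of hypercyclic vectors as the countable intersection $\bigcap_j \bigcup_n A^{-n}(V_j)$ over a countable base, using $x_n = g + R_n h$ with $g,h \in X$ to show each open set $\bigcup_n A^{-n}(V_j)$ is dense, and then invoking Baire) is precisely the standard transitivity-theorem proof given in that reference, with all three hypotheses used exactly where they are needed.
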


The proof (of a more general version) of this theorem can be found in
\cite[p.~74]{GEP}. A lot more information about the fascinating topic
of hypercyclicity can be found in \cite{GEP} and \cite{BaMa}.

\section{The forward shift and its adjoint}\label{sec_forward_shift}

We now present the two main objects of study in this note. The first
operator was originally defined, on $\ell^2(V)$, in \cite{JJS}.

\begin{defi}
  Let $T$ be a a rooted, countably infinite and localy finite
  tree. The {\em forward shift} operator $S: \calF \to \calF$ is defined as
  \[
    (Sf)(v) = \begin{cases}
      0, & \text{ if } v = \root, \\
      f(\parent{v}), & \text{ if } v \neq \root
    \end{cases}
  \]
\end{defi}

It is clear that $S$ is a linear operator on $\calF$. In
\cite[Proposition 3.4.1 (iii)]{JJS}, Jab{\l}onski, Jung and Stochel
found the form for the (Hilbert space) adjoint of $S$ on
$\ell^2(V)$. Inspired by this result, the first author made the
following definition in \cite{MA}.

\begin{defi}
  Let $T$ be a rooted, countably infinite and locally finite tree. The {\em backward shift} operator $B: \calF \to
  \calF$ is defined as
  \[
    (Bf)(v) = \sum_{w \in \Chi{v}} f(w),
  \]
  where if $v$ has no children, the sum is understood to be empty and hence $(Bf)(v)=0$.
\end{defi}

Also, it is clear that $B$ is a linear operator on $\calF$. We should
point out that the operator $S+B$ is the {\em adjacency operator (or
  adjacency matrix)} of the graph $T$. We will see at the end of this
section that $B$ is the Banach space adjoint of $S$ (on an appropriate
space). We start with some results about the forward shift.

\begin{theorem}\label{th:isometryL}
Let $T$ be a rooted, countable infinite and locally finite tree and let $S$ be the
forward shift. Then $S:\Lip \to \Lip$ is bounded and $\|S \| \leq 1$. If $T$ is leafless, then $\| S f
\|= \|f \|$. 
\end{theorem}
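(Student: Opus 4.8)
The plan is to compute $(Sf)'(v)$ explicitly for every $v\in T$ in terms of $f'$ and then take suprema. Write $g=Sf$. At the root, $g(\root)=0$, so $g'(\root)=0$. For $v\in T^*$ with $|v|=1$ (i.e.\ $\parent{v}=\root$), we have $g(v)=f(\parent{v})=f(\root)$ and $g(\parent{v})=g(\root)=0$, so $g'(v)=f(\root)=f'(\root)$. For $v\in T^*$ with $|v|\geq 2$, both $v$ and $\parent{v}$ lie in $T^*$, so $g(v)=f(\parent{v})$ and $g(\parent{v})=f(\parent{\parent{v}})=f(\nparent{v}{2})$; hence $g'(v)=f(\parent{v})-f(\nparent{v}{2})=f'(\parent{v})$. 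Summarizing, $(Sf)'$ takes the value $0$ at $\root$, the value $f'(\root)$ at each child of $\root$, and the value $f'(\parent{v})$ at each $v\in T^{**}$.

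From this description, $\norm{Sf}=\sup_{v\in T}|(Sf)'(v)|$ is the supremum of $|f'(w)|$ as $w$ ranges over all vertices that are the parent of at least one vertex, together with $0$. Every such $w$ satisfies $|f'(w)|\leq\norm f$, so $\norm{Sf}\leq\norm f$; taking the supremum over $f$ with $\norm f\leq 1$ gives $\norm S\leq 1$. When $T$ is leafless, every vertex of $T$ is the parent of some vertex (as noted right after the definition of leafless), so the set of vertices $w$ appearing above is \emph{all} of $T$; therefore the supremum of $|f'(w)|$ over those $w$ equals $\sup_{w\in T}|f'(w)|=\norm f$, giving $\norm{Sf}=\norm f$ and proving $S$ is an isometry on $\Lip$.

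There is no real obstacle here; the only thing to be careful about is the bookkeeping at the first two levels ($\root$ and its children), where the formula $g'(v)=f'(\parent v)$ would reference $\nparent{v}{2}$ which does not exist, so those cases genuinely must be split off as above. One should also confirm that $Sf\in\Lip$ whenever $f\in\Lip$, but this is immediate from the boundedness estimate $\norm{Sf}\leq\norm f<\infty$.
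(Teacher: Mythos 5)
Your proposal is correct and follows essentially the same route as the paper: the paper compresses your case analysis at the root, the children of the root, and $T^{**}$ into the single identity $(Sf)'=Sf'$ (stated as a ``straightforward calculation'') and then takes the same suprema, using the same observation that in a leafless tree every vertex is a parent. Your version simply writes out that identity explicitly, which is a fine (and arguably more careful) presentation of the identical argument.
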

\begin{proof}
Let $f \in \Lip$. It is a straightforward calculation to check that
$(S f)'=S f'$. Hence we get
\[
\| S f \| = \sup_{v \in T} |(Sf)'(v)| = \sup_{v \in T} |(S f')(v)| =
\sup_{v \in T^*} |(S f')(v)| =  \sup_{v \in T^*} |f'(\parent{v})| \leq 
\sup_{w \in T} |f'(w)| = \| f \|,
\]
and hence $\|S\| \leq 1$. If $T$ is leafless, the 
the inequality in the expression above is an equality. Hence $S$ is an isometry, as desired. 
\end{proof}

The same result holds for $S$ as an operator on $\Lip_0$.

\begin{theorem}\label{th:isometryL0}
Let $T$ be a rooted, countable infinite and locally finite tree and let $S$ be the
forward shift. Then $S:\Lip_0 \to \Lip_0$ is bounded 
and $\|S \| \leq 1$. If $T$ is leafless, then $\| S f
\|= \|f \|$. 
\end{theorem}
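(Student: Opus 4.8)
The plan is to reduce the $\Lip_0$ statement to the computation already carried out for $\Lip$ in Theorem~\ref{th:isometryL}. The only genuinely new point is that $S$ maps $\Lip_0$ into itself; once that is established, the norm estimates are inherited verbatim. So first I would verify that if $f \in \Lip_0$, then $Sf \in \Lip_0$. Using the identity $(Sf)' = S(f')$ established in the proof of Theorem~\ref{th:isometryL}, we have $(Sf)'(v) = f'(\parent{v})$ for $v \in T^*$ and $(Sf)'(\root) = 0$. Since $|\parent{v}| = |v| - 1 \to \infty$ as $|v| \to \infty$, and since $f'(w) \to 0$ as $|w| \to \infty$ because $f \in \Lip_0$, it follows that $(Sf)'(v) \to 0$ as $|v| \to \infty$; hence $Sf \in \Lip_0$.

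With the mapping property in hand, the norm bound $\|Sf\| \le \|f\|$ for $f \in \Lip_0$ is an immediate consequence of the same chain of equalities and the single inequality used in Theorem~\ref{th:isometryL}: the $\Lip_0$ norm is the restriction of the $\Lip$ norm, so nothing changes. Thus $S : \Lip_0 \to \Lip_0$ is bounded with $\|S\| \le 1$.

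For the leafless case, I would again invoke the argument of Theorem~\ref{th:isometryL}: when $T$ is leafless every vertex $w \in T$ is the parent of some vertex, so $\sup_{v \in T^*}|f'(\parent{v})| = \sup_{w \in T}|f'(w)|$, turning the one inequality into an equality and giving $\|Sf\| = \|f\|$ for every $f \in \Lip_0$. Hence $S$ acts isometrically on $\Lip_0$ when $T$ is leafless.

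I do not expect any real obstacle here; the proof is essentially a one-line observation that $\Lip_0$ is $S$-invariant together with a pointer back to Theorem~\ref{th:isometryL}. The only thing to be careful about is making the invariance argument explicit (the behavior of $f'$ along parents as one moves toward the boundary), since that is the sole ingredient not already contained in the $\Lip$ result. Consequently the written proof can be quite short, reading roughly: ``Since $(Sf)' = S(f')$, if $f \in \Lip_0$ then $(Sf)'(v) = f'(\parent{v}) \to 0$ as $|v| \to \infty$, so $Sf \in \Lip_0$; the norm statements then follow exactly as in the proof of Theorem~\ref{th:isometryL}.''
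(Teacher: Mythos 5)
Your proposal is correct and follows essentially the same route as the paper: both establish the invariance $S(\Lip_0)\subseteq\Lip_0$ via the identity $(Sf)'=Sf'$ and the fact that $f'(\parent{v})\to 0$ as $|v|\to\infty$, and then inherit the norm computation (and the leafless isometry claim) directly from Theorem~\ref{th:isometryL}. No gaps.
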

\begin{proof}
We first observe that if $f \in \Lip_0$, then $Sf \in \Lip_0$. Indeed, 
\[
\lim_{|v|\to \infty} (Sf)'(v)= \lim_{|v|\to \infty} (Sf')(v)=
\lim_{|v|\to \infty} f'(\parent{v})= \lim_{|w|\to \infty} f'(w)=0.
\]
Since $\Lip_0$ is a closed subspace of $\Lip$, it follows that $S$ is
a bounded operator on $\Lip_0$. The calculation in the theorem above then
shows that $\|S\| \leq 1$ and that $S$ is an isometry if $T$ is leafless.
\end{proof}

If $T$ has leaves, we will show later (right after Proposition~\ref{prop:empty_spec}) that $S$ has nontrivial kernel (both as an operator on $\Lip$ and on $\Lip_0$) and hence it is not an isometry.

If $T$ is leafless, we have shown that $S$ is an isometry on $\Lip$. By a
theorem of Koehler and Rosenthal (see, for example,
\cite[p.~11]{FlJa}) there exists a semi-inner product $[\cdot,\cdot]$
in $\Lip$ such that $[Sf,Sg]=[f,g]$ for all $f, g \in \Lip$. This fact
raises a lot of questions about the structure of $\Lip$ with this
seminorm. We plan to explore this in future research.

On the other hand, if $T$ has leaves, one may ask how ``far'' is $S$
from an isometry.\footnote{We thank a referee for suggesting this
  question and for providing the reference in the previous paragraph.} It is not hard to see (for example, use
Theorem~\ref{th:specS} below and mimic the
argument in \cite[Problem 150]{halmos}) that $\|S-U\|=2$, if $U$ is a
surjective isometry. Is it possible that there exists a (non
surjective) isometry $U$ such that $\|S-U\|< 2$? We leave this
question open for future research.

We should point out that if $T$ has leaves, we can define
\[
  \calM:=\{f \in \Lip \, : \, f(v)=0 \text{ for all } v \in A \},
\]
where $A:=\{ w \in T: w \text{ is a leaf or } w=\nparent{v}{n} \text{ for some leaf } v \in T \text{
  and } n \in \N \}$; i.e., $\calM$ is the subspace of all functions
that vanish on every leaf and every ancestor of a leaf. Then clearly
$\calM$ is an invariant subspace for $S$ (of finite codimension if $T$ has
finitely many leaves) and $S \big|_\calM$ is an isometry. Thus, even
if $T$ has leaves, there is a subspace in which $S$ acts isometrically.

We will now study the spectrum of $S$. We first show that the forward
shift has no eigenvalues, not even on $\calF$. Recall that for an
operator $A$, the set of eigenvalues, the approximate point spectrum and the
spectrum, are denoted by $\sigma_{\operatorname{p}}(A)$,
$\sigma_{\operatorname{ap}}(A)$,  and $\sigma(A)$, respectively. (The
relevant definitions can be found in, e.g., \cite{Conway}.)

\begin{proposition}\label{prop:empty_spec}
Let $T$ be a rooted, leafless, countably infinite and locally finite tree and let $S$ be the
forward shift on $\calF$. Then $\sigma_{\operatorname{p}}(S)=\varnothing$.
\end{proposition}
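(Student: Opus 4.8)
The plan is to suppose that $Sf = \lambda f$ for some $f \in \calF$ and some $\lambda \in \C$, and derive that $f$ must be identically zero. Writing out the eigenvalue equation vertex by vertex, at the root we get $(Sf)(\root) = 0$, so $\lambda f(\root) = 0$; and for $v \neq \root$ we get $f(\parent{v}) = \lambda f(v)$. I would split into two cases according to whether $\lambda = 0$ or $\lambda \neq 0$.

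If $\lambda = 0$, then the equation $f(\parent{v}) = 0$ for every $v \in T^*$ forces $f$ to vanish on every vertex that is a parent of some other vertex; since $T$ is leafless, every vertex has a child, so every vertex is a parent, and hence $f \equiv 0$. If $\lambda \neq 0$, then from $\lambda f(\root) = 0$ we get $f(\root) = 0$, and then the recursion $f(v) = \lambda^{-1} f(\parent{v})$ propagates this outward: by induction on $|v|$, if $f$ vanishes on all vertices of length $n$ then it vanishes on all vertices of length $n+1$, so again $f \equiv 0$. In either case $f$ is the zero function, so $\lambda$ is not an eigenvalue, and $\sigma_{\operatorname{p}}(S) = \varnothing$.

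There is no real obstacle here; the only thing to be slightly careful about is the role of the leafless hypothesis, which is exactly what is needed in the $\lambda = 0$ case (if $T$ had a leaf $v$, a function supported at $v$ alone would be in $\ker S$). For $\lambda \neq 0$ leaflessness is not needed. I would present the argument as a short direct computation, organized by the two cases above, with the induction in the $\lambda \neq 0$ case stated explicitly since it is the mechanism that kills the eigenfunction.
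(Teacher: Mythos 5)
Your proposal is correct and follows essentially the same route as the paper: the case $\lambda=0$ uses leaflessness to conclude that every vertex is a parent and hence $f\equiv 0$, and the case $\lambda\neq 0$ starts from $f(\root)=0$ and propagates zero outward by induction on $|v|$. No gaps.
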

\begin{proof}
Let $f \in \calF$. Clearly, $(Sf)(v)=0$ for all $v \in T$ implies that
$f(\parent{v})=0$ for all $v \in T^*$ and hence $f(w)=0$ for all $w
\in T$. Thus $\lambda=0$ is not eigenvalue.

Assume there exists $\lambda \neq 0$ such that $Sf=\lambda f$, with $f
\in \calF$. Since $0=(Sf)(\root)=\lambda f(\root)$ it follows that
$f(\root)=0$. Now, for every $v \in \Chi{\root}$ we have $\lambda f(v) =
(Sf)(v)=f(\parent{v})=f(\root)=0$ and hence $f(v)=0$. Continuing in
this manner, we obtain that $f(v)=0$ for all $v \in T$ and thus $\lambda$ is not an eigenvalue.
\end{proof}

Observe that if the tree has a leaf, then $0$ is an eigenvalue of $S$ as an operator on $\calF$:
indeed, if $v$ is a leaf, then $S \chi_{\{v\}}=0$, where
$\chi_{\{v\}}$ is the characteristic function of $v$. The proof above
shows that, in this case, $0$ is the unique eigenvalue of $S$.

As a corollary, it should be noted that in the leafless case, $S$ has
no eigenvalues as an operator on $\Lip$ and as an operator on
$\Lip_0$. If $T$ has a leaf, $0$ is an eigenvalue for $S$ both on
$\Lip$ and on $\Lip_0$, with eigenvector $\chi_{\{v\}}$. (By the way,
this also shows that if $T$ has a leaf, then $S$ is not an isometry on
$\Lip$ nor on $\Lip_0$; see Theorems \ref{th:isometryL} and \ref{th:isometryL0}).

In the leafless case, since $S$ is an isometry, its approximate point spectrum lies in the
unit circle. Indeed, let $\lambda \in \C$ with $|\lambda|\neq 1$; then
\[
  \| (S-\lambda) f \| \geq \mod{ \|Sf\| - \| \lambda f \| } =
  \mod{ 1-|\lambda| } \,  \|  f \|,
\]
and hence $S-\lambda$ is bounded below. Therefore, $\lambda \notin
\sigma_{\operatorname{ap}}(S)$. 

The following theorem gives a full description of the spectrum of
$S$ in the leafless case. It should be noted that it is known that the spectrum of a
noninvertible isometry is always $\overline{\D}$
(e.g. \cite[p.~213]{Conway}), we prefer to give an independent proof
since it gives more information about $S$.

\begin{theorem}\label{th:specS}
Let $T$ be a rooted, countable infinite and locally finite tree and let $S$ be the
forward shift on $\Lip$ or on $\Lip_0$. Then,
$\sigma(S)=\overline{\D}$. If $T$ is leafless, then $\sigma_{\operatorname{ap}}(S)=\partial \D$.
\end{theorem}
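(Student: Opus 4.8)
The plan is to obtain $\sigma(S)\subseteq\overline{\D}$ for free from $\|S\|\le 1$ (Theorems~\ref{th:isometryL} and~\ref{th:isometryL0}), and then to establish the reverse inclusion by showing that $S-\lambda$ is \emph{not surjective} whenever $|\lambda|<1$; since the spectrum is closed, this gives $\overline{\D}\subseteq\sigma(S)$ at once. The value $\lambda=0$ is handled first and separately: since $(Sf)(\root)=0$ for every $f$, no function that is nonzero at the root (for instance the constant function $1$, which lies in $\Lip_0$) belongs to the range of $S$, so $0\in\sigma(S)$ for $S$ acting on $\Lip$ and on $\Lip_0$, regardless of whether $T$ has leaves.

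For $0<|\lambda|<1$ the idea is to exploit the linear growth estimate of Lemma~\ref{le:co_ea}. First I would observe that, because $\lambda\neq 0$, the equation $(S-\lambda)f=g$ determines $f$ uniquely in $\calF$ from $g$, through the recursion $f(\root)=-g(\root)/\lambda$ and $f(v)=\lambda^{-1}\bigl(f(\parent{v})-g(v)\bigr)$ for $v\neq\root$ (in particular $S-\lambda$ is injective, so no such $\lambda$ is an eigenvalue). Next, using K\"onig's lemma I would fix a ray $\root=v_0\sim v_1\sim v_2\sim\cdots$ with $|v_n|=n$ (such a ray exists because $T$ is infinite and locally finite), and take $g$ supported on this ray with $g(v_0)=0$ and $g(v_n)=\lambda^{n}$ for $n\ge 1$; a direct inspection of the values of $g'$ shows $g\in\Lip_0\subseteq\Lip$. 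Solving the recursion along the ray yields a closed form (a finite geometric sum) from which $\mod{f(v_n)}$ is seen to grow like $\mod{\lambda}^{1-n}$, i.e.\ exponentially in $n$. But Lemma~\ref{le:co_ea} forces $\mod{h(v_n)}\le(n+1)\norm{h}$ for every $h\in\Lip$, so $f\notin\Lip$; hence $g$ is in the range of $S-\lambda$ neither on $\Lip$ nor on $\Lip_0$, and $\lambda\in\sigma(S)$. Combining the cases gives $\D\subseteq\sigma(S)$, and taking closures, $\sigma(S)=\overline{\D}$ in both settings and for every tree.

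For the final assertion, assume $T$ is leafless. The inclusion $\sigma_{\operatorname{ap}}(S)\subseteq\partial\D$ is exactly the computation recorded immediately before the theorem (it uses that $S$ is an isometry, Theorems~\ref{th:isometryL} and~\ref{th:isometryL0}). For the reverse inclusion I would invoke the general fact that the topological boundary of the spectrum is contained in the approximate point spectrum (see, e.g., \cite{Conway}); since $\sigma(S)=\overline{\D}$, that boundary is $\partial\D$, so $\partial\D\subseteq\sigma_{\operatorname{ap}}(S)$ and equality follows. One could instead give a self-contained argument by constructing explicit approximate eigenvectors for $\mod{\lambda}=1$, supported on an initial segment of a ray and modeled on the classical unilateral-shift computation on $\ell^2$; this is a little more delicate in the supremum norm of $\Lip$ but carries no essential difficulty.

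The step I expect to be the crux is the construction for $0<\mod{\lambda}<1$: choosing $g$ so that it sits in $\Lip_0$ while the forced $\calF$-preimage $f$ blows up faster than linearly, in contradiction with Lemma~\ref{le:co_ea}. The remaining pieces — the recursion, the verification that $g\in\Lip_0$, the geometric-series computation, and the assembly of the inclusions — are routine.
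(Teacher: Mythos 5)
Your argument is correct and follows essentially the same route as the paper: the inclusion $\sigma(S)\subseteq\overline{\D}$ from $\|S\|\le 1$, membership of each $\lambda\in\D$ via non-surjectivity of $S-\lambda$ (the unique formal preimage in $\calF$ of a suitable $g\in\Lip_0$ grows too fast to be Lipschitz), and $\sigma_{\operatorname{ap}}(S)=\partial\D$ from the boundary-of-spectrum fact combined with the isometry estimate. The only difference is cosmetic: the paper takes $g=\chi_{\{\root\}}$, whose preimage is the explicit radial function $f(v)=-\lambda^{-(|v|+1)}$ with visibly unbounded $f'$, whereas you take $g$ supported on a ray and detect the blow-up of $f$ itself against the linear bound of Lemma~\ref{le:co_ea}; both work.
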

\begin{proof}
  Since $\|S\|\leq 1$, it follows that $\sigma(S) \subseteq
  \overline{\D}$. It is easily verified that the equation $Sf =
  \chi_{\{\root\}}$ has no solution $f \in \calF$ (just evaluate at
  the root $\root$), hence $0 \in \sigma(S)$.

  Let $\lambda \in \D$, $\lambda \neq 0$. Then the equation
  \[
    (S-\lambda) f = \chi_{\{\root\}}
  \]
  has a unique solution $f \in \calF$ given by $f(v)= \frac{-1}{\lambda^{|v|+1}}$, as a
  straightforward calculation shows. But in this case, $f'(v)$, for $v
  \in T^*$, is given by
  \[
    f'(v)=(1-\lambda^{-1}) \lambda^{-|v|}.
  \]
  But since $\lambda \in \D$, the function $f'$ is unbounded, and thus
  $f \notin \Lip$ (and $f \notin \Lip_0$). Hence $S-\lambda$ is not
  surjective and thus $\D \setminus \{ 0 \} \subseteq \sigma(S)$. It
  then follows that $\overline{\D} \subseteq  \sigma(S)$ and hence
  $\sigma(S)=\overline{\D}$.

  Lastly, assume $T$ is leafless. Recall that for any operator $A$ we have $\partial \sigma(A) \subseteq
  \sigma_{\operatorname{ap}}(A)$ (e.g. \cite[Prop. 6.7]{Conway}
  and hence $\partial \D = \partial \sigma(S) \subseteq \sigma_{\operatorname{ap}}(S)
  \subseteq \partial \D$ (since $S$ is an isometry, as noted above). Therefore $
  \sigma_{\operatorname{ap}}(S)= \partial \D$, as desired.
\end{proof}
If $T$ has leaves, as we showed before, $0$ is an eigenvalue of $S$
and thus $  \sigma_{\operatorname{ap}}(S) \neq  \partial \D$. Also
observe that, as a corollary, we obtain that $\|S\|=1$, even in the
case where $T$ has leaves.

In \cite{CoMA1} it is shown that the dual space of $\Lip_0$ is
(isometrically isomorphic to) the space $L^1(T)$ and the dual space of
$L^1(T)$ is (isometrically isomorphic to) $\Lip$. Using the
identification in \cite{CoMA1} we can make the following observations.

\begin{proposition}
Let $T$ be a rooted, countable infinite and locally finite tree. If $S:\Lip_0 \to
\Lip_0$ is the forward shift, then $S^*: L^1(T) \to L^1(T)$ is given
by $S^*=B$, where $B$ is the backward shift restricted to $L^1(T)$.
\end{proposition}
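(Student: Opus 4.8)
The plan is to work directly with the duality pairing from \cite{CoMA1} and verify the defining relation $\langle Sf, \varphi\rangle = \langle f, S^*\varphi\rangle$ for all $f \in \Lip_0$ and $\varphi \in L^1(T)$, then read off that $S^*$ coincides with $B$ on $L^1(T)$. First I would recall explicitly the form of the pairing between $\Lip_0$ and $L^1(T)$ as given in \cite{CoMA1}: it is expressed in terms of the ``derivative'' $f'$, so that $\langle f, \varphi\rangle = \sum_{v \in T} f'(v)\,\overline{\varphi(v)}$ (or the appropriate bilinear version), with $L^1(T)$ carrying the counting-measure $\ell^1$-norm on $T$. This is the natural object to use because the norm on $\Lip$ is defined via $f'$, and because the key computational identity $(Sf)' = S(f')$ was already established in the proof of Theorem~\ref{th:isometryL}.

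The main computation is then short. Using $(Sf)' = S f'$, we get
\[
\langle Sf, \varphi\rangle = \sum_{v \in T} (Sf)'(v)\,\overline{\varphi(v)} = \sum_{v \in T} (Sf')(v)\,\overline{\varphi(v)} = \sum_{v \in T^*} f'(\parent{v})\,\overline{\varphi(v)}.
\]
Reindexing the last sum over $w = \parent{v}$ and collecting all $v \in \Chi{w}$, this equals $\sum_{w \in T} f'(w)\,\overline{\sum_{v \in \Chi{w}} \varphi(v)} = \sum_{w \in T} f'(w)\,\overline{(B\varphi)(w)} = \langle f, B\varphi\rangle$, where $B\varphi(w) = \sum_{v \in \Chi{w}}\varphi(v)$ is exactly the backward shift applied to $\varphi$. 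Since this holds for every $f \in \Lip_0$, and the pairing identifies $L^1(T)$ with $(\Lip_0)^*$, we conclude $S^*\varphi = B\varphi$.

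There are two points that need care, and the genuine obstacle is the first. One must check that $B$ actually maps $L^1(T)$ into $L^1(T)$, i.e., that $\sum_{w \in T} |(B\varphi)(w)| \leq \sum_{w \in T}\sum_{v \in \Chi{w}} |\varphi(v)| = \sum_{v \in T^*} |\varphi(v)| \leq \|\varphi\|_{L^1(T)}$; this is immediate from the triangle inequality and the fact that every non-root vertex has exactly one parent, so each $\varphi(v)$ is counted at most once. (In fact $B$ is a contraction on $L^1(T)$, consistent with $\|S\| \leq 1$.) The second, more delicate point is justifying the interchange of summation in the reindexing step: since locally finiteness does not bound the total number of children globally, one should invoke absolute convergence — the double series $\sum_{v \in T^*} |f'(\parent v)|\,|\varphi(v)| \leq \|f\|\sum_{v\in T^*}|\varphi(v)| < \infty$ — to apply Fubini/Tonelli for counting measures and rearrange freely. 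With these two observations in place, the identity $\langle Sf,\varphi\rangle = \langle f, B\varphi\rangle$ is rigorous, and uniqueness of the adjoint gives $S^* = B$ on $L^1(T)$.
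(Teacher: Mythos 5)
Your proof is correct, and it uses the same duality pairing from \cite{CoMA1} that the paper uses, but the computation is organized differently and is in fact shorter. You exploit the intertwining identity $(Sg)'=Sg'$ (already established in the proof of Theorem~\ref{th:isometryL}) to write $\Phi_f(Sg)=\sum_{v\in T^*} f(v)\,g'(\parent{v})$ in one step, and then a single reindexing over $w=\parent{v}$ collects the children and produces $\sum_{w\in T}\bigl(\sum_{v\in\Chi{w}}f(v)\bigr)g'(w)=\Phi_{Bf}(g)$. The paper instead expands $(Sg)'(w)=(Sg)(w)-(Sg)(\parent{w})$ from the definition, obtaining a difference of two sums involving $g(\parent{w})$ and $g(\nparent{w}{2})$, and then performs a lengthier regrouping to reassemble the expression into $g'$ terms. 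Your route buys brevity and reuses an identity already in hand; the paper's route is self-contained within the proposition. You also make explicit two points the paper leaves implicit: that $B$ is a contraction on $L^1(T)$ (so the formula genuinely defines an operator on $L^1(T)$), and that the reindexing is licensed by absolute convergence of the double sum, which is dominated by $\|f\|_{L^1}\,\|g\|$. Both are worth recording.
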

\begin{proof}
It is shown in \cite{CoMA1} that, for every $f \in L^1(T)$, the
functional $\Phi_f: \Lip_0 \to \C$, defined as
\[
  \Phi_f(g)=\sum_{v \in T} f(v) g'(v),
\]
for each $g \in \Lip_0$, is bounded and the mapping $f \mapsto \Phi_f$
is an isometric isomorphism from $L^1(T)$ onto $\Lip_0^*$. Using this
identification we obtain
\begin{equation}\label{eq:duality}
\begin{split}
  (S^* \Phi_f)(g)
  &= \Phi_f(Sg) \\
  &= \sum_{w\in T} f(w) (Sg)'(w) \\
  &= f(\root) (Sg) (\root) + \sum_{w\in T^*} f(w) \left( (Sg)(w) - (Sg)(\parent{w}) \right) \\
  &= \sum_{v\in T^*} f(w) \left( (Sg)(w) - (Sg)(\parent{w} \right)\\
  &= \sum_{w\in T^{*}} f(w) g(\parent{w}) - \sum_{w\in T^{**}} f(w) g(\nparent{w}{2}).
\end{split}
\end{equation}
Observe that for every vertex $v \in T$ either $\Chi{v}$ is empty and hence 
\[
\sum_{w\in \Chi{v}} f(w)=0,
\]
or there are vertices $w \in T^*$ with $v=\parent{w}$. Therefore,
\[
\sum_{w\in T^{*}} f(w) g(\parent{w})=\sum_{v \in T} g(v) \left( \sum_{w\in \Chi{v}} f(w)\right).
\]
Hence, Equation \eqref{eq:duality} implies that
    \begin{align*} 
      (S^* \Phi_f)(g)
  &= \sum_{v \in T} g(v) \left( \sum_{w\in \Chi{v}} f(w)\right) - \sum_{v \in T^*}
      g(\parent{v}) \left( \sum_{w\in \Chi{v}} f(w) \right) \\
  &= \left( \sum_{w\in \Chi{\root}} f(w)\right) g(\root) + 
      \sum_{v \in T^*} \left( \sum_{w\in \Chi{v}} f(w)\right) (g(v)-
      g(\parent{v}))\\
  &= \sum_{v \in T} \left( \sum_{w\in \Chi{v}} f(w)\right) g'(v) \\
  &= \sum_{v \in T} (Bf)(v) g'(v) \\
  &= \Phi_{Bf}(g).
\end{align*}
Hence $S^*$ can be identified with $B$ on $L^1(T)$.
\end{proof}

Using the same technique as above, the following result can be shown.
\begin{proposition}
Let $T$ be a rooted, countable infinite and locally finite tree. If $B:L^1(T) \to
L^1(T)$ is the backward shift, then $B^*: \Lip \to \Lip$ is given by $B^*=S$.
\end{proposition}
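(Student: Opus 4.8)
The strategy is to mimic the computation of the previous proposition, now using the other half of the duality from \cite{CoMA1}, namely the identification of $\Lip$ with the dual of $L^1(T)$. First I would recall that, by \cite{CoMA1}, the map $h \mapsto \Psi_h$, where
\[
\Psi_h(f) := \sum_{v \in T} f(v)\, h'(v) \qquad (f \in L^1(T)),
\]
is an isometric isomorphism of $\Lip$ onto $L^1(T)^*$; this functional is well defined and bounded because $|h'(v)| \le \|h\|$ while $\sum_{v\in T} |f(v)| = \|f\|_{L^1(T)} < \infty$. I would also note that $B$ maps $L^1(T)$ into itself with $\|B\|\le 1$, since $\|Bf\|_{L^1(T)} = \sum_{v\in T} \bigl|\sum_{w\in\Chi{v}} f(w)\bigr| \le \sum_{v\in T}\sum_{w\in\Chi{v}} |f(w)| = \sum_{w\in T^*} |f(w)| \le \|f\|_{L^1(T)}$, so that $B^*$ is a genuine bounded operator on $L^1(T)^* \cong \Lip$.

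The main computation is then as follows: for $h \in \Lip$ and $f \in L^1(T)$,
\[
(B^*\Psi_h)(f) = \Psi_h(Bf) = \sum_{v\in T} (Bf)(v)\, h'(v) = \sum_{v\in T} \left( \sum_{w\in\Chi{v}} f(w) \right) h'(v).
\]
Since $\sum_{v\in T}\sum_{w\in\Chi{v}} |f(w)|\,|h'(v)| \le \|h\| \sum_{w\in T^*} |f(w)| < \infty$, the order of summation may be interchanged; because every $w \in T^*$ lies in $\Chi{v}$ for exactly one $v$ (namely $v = \parent{w}$) and every $w \in \Chi{v}$ lies in $T^*$, the double sum collapses to $\sum_{w\in T^*} f(w)\, h'(\parent{w})$.

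To finish, I would recall (as in the proof of Theorem~\ref{th:isometryL}) that $(Sh)' = S h'$, so that $(Sh)'(v) = h'(\parent{v})$ for $v \in T^*$ and $(Sh)'(\root) = 0$. Hence $\sum_{w\in T^*} f(w)\, h'(\parent{w}) = \sum_{v\in T} f(v)\,(Sh)'(v) = \Psi_{Sh}(f)$, and since $f \in L^1(T)$ is arbitrary this gives $B^*\Psi_h = \Psi_{Sh}$; under the identification $\Lip \cong L^1(T)^*$ this is precisely $B^* = S$. The only real obstacle is bookkeeping: having the absolute-convergence estimate in place before swapping the two sums, and keeping straight that the single bilinear pairing $\langle f, h\rangle = \sum_{v\in T} f(v)\,h'(v)$ realizes both $\Lip_0^* = L^1(T)$ and $L^1(T)^* = \Lip$. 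No idea beyond that of the preceding proposition is required.
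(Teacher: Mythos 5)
Your argument is correct and is exactly the proof the paper intends: the paper omits the details, saying only that the result follows ``using the same technique as above,'' and your computation — pairing $\Lip$ with $L^1(T)^*$ via $\Psi_h(f)=\sum_{v\in T} f(v)h'(v)$, reindexing the absolutely convergent double sum over parent--child pairs, and invoking $(Sh)'=Sh'$ — is precisely that technique. The added care about absolute convergence and the boundedness of $B$ on $L^1(T)$ is a small but welcome tightening of what the paper leaves implicit.
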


The above results show why we choose to call $B$ the ``backward''
shift, as an analogy of what happens with the classical forward and
backward shifts on $\ell^p(\N)$. We study this operator in greater
depth in the next section.

\section{The backward shift}\label{sec_backward_shift}

It will turn out that the backward shift operator is not always
bounded on $\Lip$ or on $\Lip_0$, as was the case for the forward
shift. We will need the following definition (see Figure~\ref{fig:homogeneous} for an example).

\begin{defi}
Let $T$ be a rooted, countably infinite and locally finite tree. We say that
$T$ is {\em homogenous by sectors (at the level $N$)} if there exists $N \in \N_0$ such
that for all $v \in T$ with $|v|=N$, we have $\gamma(v)=\gamma(w)$ for
each $w \in S_v$.
\end{defi}

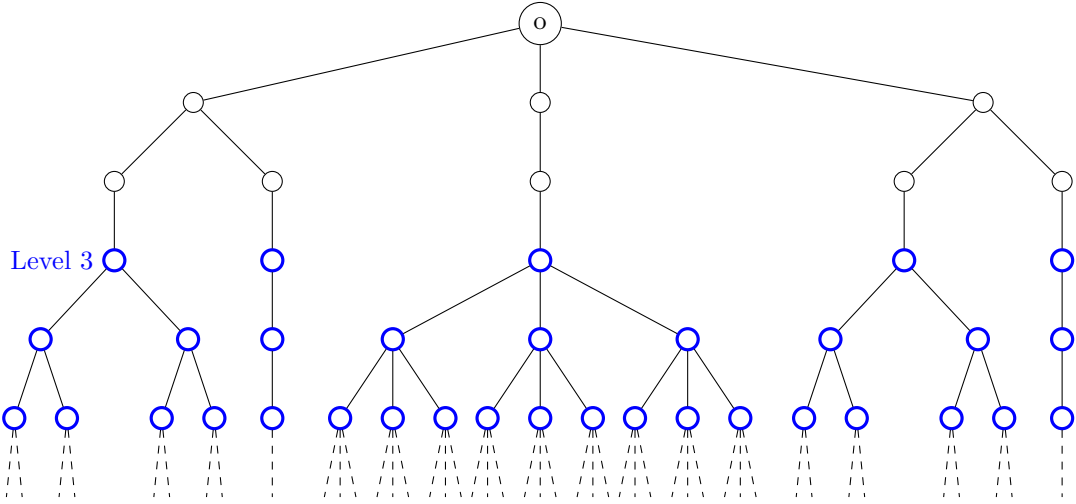
\begin{figure}[!htbp]
\centering
\begin{tikzpicture}[level 1/.style={sibling distance=7.5cm}, level
  2/.style={sibling distance=3cm}, level 3/.style={sibling
    distance=1.5cm}, level 4/.style={sibling distance=2.8cm}, level
  5/.style={sibling distance=1cm}, level 6/.style={sibling
    distance=0.3cm}, emph/.style={edge from parent/.style={black, dashed,draw}},
  scale=0.7]
\node[draw,circle, scale=1] {$\root$} [grow'=down] 
child {node[draw,circle, scale=.8,right=0.5cm]  {} 
    child{node[draw,circle, scale=.8] {}
         child{node[arn_r, draw,circle, scale=.8] {} 
              child{node[arn_r, draw,circle, scale=.8]{}
                child{node[arn_r, draw,circle, scale=.8]{}
                child[emph]{
                }
              }
            }
          }
        }
        child{node[draw,circle, scale=.8] {}
           child{node[arn_r, draw,circle, scale=.8] {}
             child{node[arn_r, draw,circle, scale=.8] {}
               child{node[arn_r, draw,circle, scale=.8] {}
                 child[emph]{}
               child[emph]{}}
             child{node[arn_r, draw,circle, scale=.8] {}
               child[emph]{}
             child[emph]{}}                 
                 }
             child{node[arn_r, draw,circle,
               scale=.8] {}
               child{node[arn_r, draw,circle, scale=.8] {}
                 child[emph]{}
               child[emph]{}}
               child{node[arn_r, draw,circle, scale=.8] {}
                 child[emph]{}
               child[emph]{}}                 
               }
            }}}
child {node[draw,circle, scale=.8]  {} 
    child {node[draw,circle, scale=.8]{}
           child{node[arn_r, draw,circle, scale=.8] {}
              child{node[arn_r, draw,circle, scale=.8] {}
                child{node[arn_r, draw,circle, scale=.8] {}
                 child[emph]{}
                 child[emph]{}
                 child[emph]{}}
               child{node[arn_r, draw,circle, scale=.8] {}
               child[emph]{}
                 child[emph]{}
                 child[emph]{}}
               child{node[arn_r, draw,circle, scale=.8] {}
               child[emph]{}
                 child[emph]{}
                 child[emph]{}}                 
                 }
              child{node[arn_r, draw,circle, scale=.8] {}
                child{node[arn_r, draw,circle, scale=.8] {}
                child[emph]{}
                 child[emph]{}
                 child[emph]{}}
               child{node[arn_r, draw,circle, scale=.8] {}
               child[emph]{}
                 child[emph]{}
                 child[emph]{}}
               child{node[arn_r, draw,circle, scale=.8] {}
               child[emph]{}
                 child[emph]{}
                 child[emph]{}}                  
               }
             child{node[arn_r, draw,circle, scale=.8] {}
               child{node[arn_r, draw,circle, scale=.8] {}
               child[emph]{}
                 child[emph]{}
                 child[emph]{}}
               child{node[arn_r, draw,circle, scale=.8] {}
               child[emph]{}
                 child[emph]{}
                 child[emph]{}}
               child{node[arn_r, draw,circle, scale=.8] {}
               child[emph]{}
                 child[emph]{}
                 child[emph]{}}                  
               }
            }
         }
       }
child {node[draw,circle, scale=.8,right=0.5cm]  {}
  child{node[draw,circle, scale=.8] {}
    child{node[arn_r, draw,circle, scale=.8] {} 
              child{node[arn_r, draw,circle, scale=.8]{}
                child{node[arn_r, draw,circle, scale=.8]{}
                child[emph]{
                }
              }
            }
          }
          }
    child{node[draw,circle, scale=.8] {} 
        child{node[arn_r, draw,circle, scale=.8,label=left:{\textcolor{blue}{Level $3$}}] {}
             child{node[arn_r, draw,circle, scale=.8] {}
               child{node[arn_r, draw,circle, scale=.8] {}
               child[emph]{}
               child[emph]{}}
             child{node[arn_r, draw,circle, scale=.8] {}
             child[emph]{}
               child[emph]{}}                 
                 }
             child{node[arn_r, draw,circle,
               scale=.8] {}
               child{node[arn_r, draw,circle, scale=.8] {}
                 child[emph]{}
               child[emph]{}}
             child{node[arn_r, draw,circle, scale=.8] {}
             child[emph]{}
               child[emph]{}}                 
            }
         }
       }
     };
 \end{tikzpicture}
\caption{A tree which is homogeneous by sectors at the level $3$.\label{fig:homogeneous}}
\end{figure}

Intuitively, a tree is homogeneous by sectors if after some level
every subtree is a homogeneous tree. For a tree $T$, homogeneous by sectors at the level $N$, we define
\[
  \Lambda:=\sup\{\gamma(v) +\gamma(\parent{v}) -1 + |\gamma(v)-1|+ |\gamma(v)-\gamma(\parent{v})| \, |v|  \, : \, v \in T^* \}.
\]
Since the tree is homogeneous by sectors at the level $N$, it turns out the above supremum is finite and, in fact,
\[
\Lambda = \max \{ \gamma(v) +\gamma(\parent{v}) -1 + |\gamma(v)-1|+ |\gamma(v)-\gamma(\parent{v})| \, |v|  \, : \, 0<|v|\leq N \}
\]
As it turns out, $B$ is bounded if the tree is homogeneous by sectors and $\Lambda$ gives an estimate of the norm of $B$. In the corollary that follows the next proposition, we will find a more manageable estimate.

\begin{proposition}\label{prop:boundB}
Let $T$ be a rooted, countable infinite and locally finite tree. Assume that $T$
is homogeneous by sectors at the level $N$. Then $B$ is bounded on
$\Lip$. Furthermore, 
\[
  \|B \| \leq \max\{ 2\gamma(\root), \Lambda \}.
\]
\end{proposition}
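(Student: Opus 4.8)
The plan is to estimate $\|Bf\| = \sup_{v\in T}|(Bf)'(v)|$ for $f\in\Lip$ with $\|f\|\le 1$, by bounding $|(Bf)'(v)|$ for each vertex $v$ separately. First I would write out $(Bf)'(v)$ explicitly: for $v=\root$ it is $(Bf)(\root)=\sum_{w\in\Chi{\root}}f(w)$, and for $v\in T^*$ it is
\[
(Bf)'(v) = (Bf)(v)-(Bf)(\parent v) = \sum_{w\in\Chi{v}}f(w) - \sum_{u\in\Chi{\parent v}}f(u).
\]
The key algebraic step is to rewrite this difference in terms of $f'$ rather than $f$. Since $v\in\Chi{\parent v}$, the term $f(v)$ appears in both sums and partially cancels; what survives is a combination involving $f(v)-f(\parent v)=f'(v)$, the increments $f(w)-f(v)=f'(w)$ for $w\in\Chi v$, and the increments $f(u)-f(v)$ for the siblings $u\in\Chi{\parent v}\setminus\{v\}$, each of which can be written as $f'(u)-f'(v)$ (going $u\to\parent v\to v$) or bounded by $2\|f\|$. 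Carefully collecting these, one gets an expression of the form $(Bf)'(v) = \bigl(\gamma(v)-1\bigr)f(v) + \sum_{w\in\Chi v}f'(w) - \sum_{u\in\Chi{\parent v}\setminus\{v\}}\bigl(f(u)-f(v)\bigr) - \text{(something)}$; I would organize the bookkeeping so the coefficient of the ``bad'' term $f(v)$ (which itself is only controlled by Lemma~\ref{le:co_ea} as $|f(v)|\le(|v|+1)\|f\|$) is exactly $\gamma(v)-\gamma(\parent v)$, matching the $|\gamma(v)-\gamma(\parent v)|\,|v|$ appearing in $\Lambda$.

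Concretely, I expect the clean identity to be: for $v\in T^*$,
\[
(Bf)'(v) = \sum_{w\in\Chi v} f'(w) \;-\; \sum_{\substack{u\in\Chi{\parent v}\\ u\ne v}} f'(u) \;+\; \bigl(\gamma(v)-\gamma(\parent v)+1\bigr)f(v) \;-\; \gamma(v)\,f(\parent v),
\]
or some rearrangement thereof; rewriting $f(v)=f(\parent v)+f'(v)$ lets me merge the $f(v)$ and $f(\parent v)$ terms into a term $c\cdot f(\parent v)$ with $c=\gamma(v)-\gamma(\parent v)$ plus a term $(\gamma(v)-\gamma(\parent v)+1)f'(v)$ whose coefficient absorbs into the $|\gamma(v)-1|$ and other constants. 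Then I take absolute values: the two sums of $f'$-values contribute at most $\gamma(v)+\bigl(\gamma(\parent v)-1\bigr)$ times $\|f\|$; the term $|\gamma(v)-\gamma(\parent v)+1|\,|f'(v)|$ is bounded by $(|\gamma(v)-1|+1)\|f\|$ or folded in; and the surviving $f(\parent v)$ term is bounded using $|f(\parent v)|\le|\parent v|+1 = |v|$ times $\|f\|$, giving the crucial $|\gamma(v)-\gamma(\parent v)|\,|v|$ contribution. Summing the coefficients yields exactly $\gamma(v)+\gamma(\parent v)-1+|\gamma(v)-1|+|\gamma(v)-\gamma(\parent v)|\,|v| \le \Lambda$ for every $v\in T^*$. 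For the root, $|(Bf)(\root)| = \bigl|\sum_{w\in\Chi{\root}}f(w)\bigr| \le \sum_{w\in\Chi{\root}}\bigl(|f'(w)|+|f(\root)|\bigr)\le \gamma(\root)(1+1)\|f\| = 2\gamma(\root)\|f\|$, which is where the $2\gamma(\root)$ term comes from. Taking the supremum over all $v\in T$ gives $\|Bf\|\le\max\{2\gamma(\root),\Lambda\}\|f\|$, and the finiteness of $\Lambda$ (already asserted in the text, since homogeneity by sectors forces $\gamma(v)=\gamma(\parent v)$ for $|v|>N$, killing the $|v|$-weighted term beyond level $N$) shows $B$ is bounded on $\Lip$.

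The main obstacle I anticipate is purely organizational rather than conceptual: getting the telescoping/cancellation in $(Bf)'(v)$ exactly right so that the coefficient of the genuinely dangerous term — the one multiplied by the growing quantity $|v|$ via Lemma~\ref{le:co_ea} — comes out to be precisely $|\gamma(v)-\gamma(\parent v)|$ and not something larger like $\gamma(v)$ or $\gamma(\parent v)$. If one is careless and bounds $f(v)$ directly with its $(|v|+1)\|f\|$ estimate wherever it appears, one gets a coefficient of $|v|$ proportional to $\gamma(v)$, which is \emph{not} uniformly bounded and would wrongly suggest $B$ is unbounded. The trick is that the bulk of the $f(v)$-mass must be re-expressed via $f'$-increments (which are each bounded by $\|f\|$ with no $|v|$ growth), and only the residual discrepancy $\gamma(v)-\gamma(\parent v)$ — which vanishes past level $N$ — is allowed to meet the $|v|$ factor. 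Verifying this cancellation is the heart of the argument; everything else is triangle-inequality bookkeeping.
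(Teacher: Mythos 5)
Your plan is essentially the paper's proof: expand $(Bf)'(v)$ in terms of $f'$ over $\Chi{v}$ and over $\Chi{\parent{v}}\setminus\{v\}$, isolate an $f'(v)$ term, and leave only the coefficient $\gamma(v)-\gamma(\parent{v})$ on the single term that must be controlled via Lemma~\ref{le:co_ea} (whence the factor $|v|$ in $\Lambda$), with the root handled separately to produce $2\gamma(\root)$. The one slip is in your tentative displayed identity, whose coefficients are off --- the correct form is $(Bf)'(v)=\sum_{w\in\Chi{v}}f'(w)-\sum_{u\in\Chi{\parent{v}},\,u\neq v}f'(u)+(\gamma(v)-1)f'(v)+(\gamma(v)-\gamma(\parent{v}))f(\parent{v})$ --- but since your final coefficient tally $\gamma(v)+\gamma(\parent{v})-1+|\gamma(v)-1|+|\gamma(v)-\gamma(\parent{v})|\,|v|$ matches $\Lambda$ exactly, this is precisely the bookkeeping you already flagged as needing care, and the argument goes through as in the paper.
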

\begin{proof}
  First observe that
  \[
    (Bf)'(\root) = (Bf)(\root)= \sum_{w \in \Chi{\root}} f(w) = \sum_{w \in
      \Chi{\root}} (f(w)-f(\parent{w})) + \gamma(\root) f(\root) = \sum_{w \in
      \Chi{\root}} f'(w) + \gamma(\root) f(\root)
  \]
  and hence, since $|f'(w)|\leq \| f \|$ for all $w \in T$, we have
  \[
    |(B f)'(\root)| \leq \gamma(\root) \| f \| +  \gamma(\root)
    |f(\root)| \leq 2 \gamma(\root) \, \| f \|.
  \]
Now, for $v \in T^*$, we have
\begin{equation}\label{eq:bf'_estimate}
  \begin{split}
  (Bf)'(v)
  &= \sum_{w\in \Chi{v}} f(w) - \sum_{w \in \Chi{\parent{v}}} f(w) \\
  &= \sum_{w\in \Chi{v}} \left(f(w)-f(\parent{w})\right) + \gamma(v) f(v) - \sum_{w
      \in \Chi{\parent{v}}} \left(f(w)-f(\parent{w})\right) 
      -\gamma(\parent{v}) f(\parent{v}) \\
  &= \sum_{w\in \Chi{v}} f'(w)  - \sum_{w \in
    \Chi{\parent{v}}} f'(w) + \gamma(v) f(v)
    -\gamma(\parent{v}) f(\parent{v}),
  \end{split}
  \end{equation}
and hence
\begin{align*}
  (Bf)'(v)
  &= \sum_{w\in \Chi{v}} f'(w) - \sum_{w \in
    \Chi{\parent{v}}} f'(w) +  \gamma(v) f'(v)
  +(\gamma(v)-\gamma(\parent{v})) f(\parent{v})\\
    &= \sum_{w\in \Chi{v}} f'(w) - \sum_{\substack{w \in
    \Chi{\parent{v}} \\ w\neq v}} f'(w) + ( \gamma(v)-1) f'(v)
  +(\gamma(v) - \gamma(\parent{v})) f(\parent{v})
\end{align*}

Recalling that $|f'(v)|\leq \| f \|$ for all $v \in T^*$, it follows that
\[
  |(Bf)'(v)| \leq \gamma(v) \|f\| +(\gamma(\parent{v})-1) \|f \| + |\gamma(v)-1| \, \| f \| + |\gamma(v)-\gamma(\parent{v})| \, |f(\parent{v})|.
\]

Since $|f(\parent{v})|\leq (|\parent{v}|+1) \|f \|$ by Lemma \ref{le:co_ea}, we obtain
\[
  |(Bf)'(v)| \leq (\gamma(v) + \gamma(\parent{v})-1+|\gamma(v)-1|) \|f \| + |\gamma(v)-\gamma(\parent{v})| \, |v| \, \|f \|.
\]
Therefore, $|(Bf)'(v)| \leq \Lambda \|f \|$, for all $v \in T^*$. Since $|(Bf)'(\root)| \leq 2 \gamma(\root) \|f \|$, it follows that
\[
  \|B \| \leq \max\{ 2\gamma(\root), \Lambda \},
\]
as desired.
\end{proof}


We now provide a more computable estimate of the norm of the backward
shift. For a tree $T$, homogeneous by sectors at the level $N$, we define
\[
  \Gamma:=\sup\{ \gamma(v) \, : \, v \in T \}=\max \{ \gamma(v) \, :
  \, 0 \leq |v|\leq N \}
\]
and
\[
  \Omega:= \max\{ |(\gamma(v)-\gamma(\parent{v})| \:  |v| \, : \, 0<|v|\leq N \}.
\]
Observe that if $T$ is homogeneous of order $\gamma$, then
$\Gamma=\gamma$ and $\Omega=0$.

The number $\Lambda$ that we defined above can be estimated using
$\Gamma$ and $\Omega$. Indeed, if $0<|v|\leq N$ and  $\gamma(v) \geq 1$, then
\begin{align*}
\gamma(v)+\gamma(\parent{v}) -1+|\gamma(v)-1| +
  |(\gamma(v)-\gamma(\parent{v})| \, |v|
  & =  2\gamma(v)+\gamma(\parent{v}) -2 +
    |(\gamma(v)-\gamma(\parent{v})| \, |v| \\
  &\leq 3\Gamma-2+\Omega.
\end{align*}
If $0<|v|\leq N$ and  $\gamma(v)=0$,
\begin{align*}
\gamma(v)+\gamma(\parent{v}) -1+|\gamma(v)-1| +
  |(\gamma(v)-\gamma(\parent{v})| \, |v|
  & = \gamma(\parent{v}) +  |(\gamma(v)-\gamma(\parent{v})| \, |v|\\
  &\leq \Gamma +\Omega \\
  &\leq 3\Gamma-2+\Omega.
\end{align*}
Hence, $\Lambda \leq 3\Gamma-2+\Omega$ and we obtain the following corollary.

\begin{corollary}\label{cor:boundB}
Let $T$ be a rooted, countable infinite and locally finite tree. Assume that $T$
is homogeneous by sectors at the level $N$. Then $B$ is bounded on
$\Lip$. Furthermore, 
\[
  \|B \| \leq \max\{ 2 \gamma(\root), 3 \Gamma -2 + \Omega\}.
\]
\end{corollary}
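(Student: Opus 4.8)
The plan is to deduce this directly from Proposition~\ref{prop:boundB} together with the elementary estimate $\Lambda \leq 3\Gamma - 2 + \Omega$. Proposition~\ref{prop:boundB} already establishes that $B$ is bounded on $\Lip$ and that $\|B\| \leq \max\{2\gamma(\root), \Lambda\}$, so the only remaining task is to bound $\Lambda$ in terms of the more computable quantities $\Gamma$ and $\Omega$, and then to combine the two estimates.

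First I would recall that, since $T$ is homogeneous by sectors at level $N$, the supremum defining $\Lambda$ is attained over the finite set $\{v \in T^* : 0 < |v| \leq N\}$, and likewise the suprema defining $\Gamma$ and $\Omega$ are attained over finite sets. Then I would split the vertices $v$ with $0 < |v| \leq N$ into the two cases $\gamma(v) \geq 1$ and $\gamma(v) = 0$. In the first case $|\gamma(v)-1| = \gamma(v)-1$, so the quantity $\gamma(v)+\gamma(\parent{v})-1+|\gamma(v)-1|+|\gamma(v)-\gamma(\parent{v})|\,|v|$ reduces to $2\gamma(v)+\gamma(\parent{v})-2+|\gamma(v)-\gamma(\parent{v})|\,|v|$, which is at most $3\Gamma-2+\Omega$ by the definitions of $\Gamma$ and $\Omega$. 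In the second case $\gamma(v)=0$, so the quantity equals $\gamma(\parent{v})+|\gamma(v)-\gamma(\parent{v})|\,|v| \leq \Gamma+\Omega \leq 3\Gamma-2+\Omega$, where the last step uses $\Gamma \geq 1$. Taking the maximum over all such $v$ gives $\Lambda \leq 3\Gamma-2+\Omega$, and then monotonicity of $\max$ in its second argument, applied to the bound in Proposition~\ref{prop:boundB}, yields $\|B\| \leq \max\{2\gamma(\root),\, 3\Gamma-2+\Omega\}$, as claimed.

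There is essentially no obstacle here: the substance lies entirely in Proposition~\ref{prop:boundB}, and this corollary merely repackages the case analysis already carried out in the discussion preceding its statement. The only minor point worth spelling out is the inequality $\Gamma \geq 1$ needed to absorb the $\gamma(v)=0$ case into the uniform bound; this is immediate, since a countably infinite locally finite tree cannot consist of the root alone, so the root (and hence some vertex of $T$) has at least one child.
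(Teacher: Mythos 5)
Your proposal is correct and follows essentially the same route as the paper: the discussion immediately preceding the corollary carries out exactly your two-case analysis ($\gamma(v)\geq 1$ versus $\gamma(v)=0$) to obtain $\Lambda \leq 3\Gamma-2+\Omega$, and the corollary then follows from Proposition~\ref{prop:boundB}. Your explicit remark that $\Gamma\geq 1$ is needed to absorb the $\gamma(v)=0$ case is a small point the paper leaves implicit, and your justification of it is fine.
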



Observe that $3 \Gamma -2 + \Omega < 2 \gamma(\root)$ if and only if $3 \Gamma - 2 \gamma(\root) + \Omega < 2$ and this occurs if and only if $\gamma(\root)=\Gamma=1$ and
$\Omega=0$. That is, if and only if $T$ is a homogeneous tree of order
$1$. We obtain the following corollary.

\begin{corollary}\label{cor:boundB_L0}
Let $T$ be a rooted, countable infinite and locally finite tree. Assume that $T$
is homogeneous by sectors at the level $N$. If $T$ is homogeneous of
order $1$ then $B$ is bounded on $\Lip$ and $\| B \|
\leq 2$. In any other case, $B$ is bounded on $\Lip$
and $\|B \| \leq  3 \Gamma -2 + \Omega$.
\end{corollary}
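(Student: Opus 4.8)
The plan is to read Corollary~\ref{cor:boundB_L0} off from Corollary~\ref{cor:boundB}, which already gives both the boundedness of $B$ on $\Lip$ and the bound $\|B\|\le\max\{2\gamma(\root),\,3\Gamma-2+\Omega\}$; all that remains is to decide which of the two competing quantities realizes the maximum. First I would dispose of the case in which $T$ is homogeneous of order $1$: then $\gamma(v)=1$ for every $v\in T$, so $\gamma(\root)=\Gamma=1$ and $\Omega=0$, and Corollary~\ref{cor:boundB} gives $\|B\|\le\max\{2,\,3-2+0\}=2$, which is the first assertion.

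For the remaining case I would show that the maximum in Corollary~\ref{cor:boundB} is always realized by the second term whenever $T$ is not homogeneous of order $1$. Since $T$ is infinite and locally finite, the root has at least one child, so $\gamma(\root)\ge 1$; and $\Gamma\ge\gamma(\root)$ by the definition of $\Gamma$. Hence
\[
3\Gamma-2+\Omega-2\gamma(\root)\ \ge\ \gamma(\root)+\Omega-2\ \ge\ -1 .
\]
Because $\Gamma$, $\gamma(\root)$ and $\Omega$ are nonnegative integers, the left-hand side is an integer, so it is negative only if it equals $-1$; tracing back through the inequalities, equality forces $\gamma(\root)=\Gamma=1$ and $\Omega=0$, which (as recorded in the discussion preceding the statement) means exactly that $T$ is homogeneous of order $1$. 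Thus, when $T$ is not homogeneous of order $1$ we have $3\Gamma-2+\Omega\ge 2\gamma(\root)$, so $\max\{2\gamma(\root),\,3\Gamma-2+\Omega\}=3\Gamma-2+\Omega$ and Corollary~\ref{cor:boundB} yields $\|B\|\le 3\Gamma-2+\Omega$.

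I do not expect any genuine obstacle here, since the statement is essentially a repackaging of Corollary~\ref{cor:boundB}. The one point needing care is the comparison between $2\gamma(\root)$ and $3\Gamma-2+\Omega$: it relies on $\gamma(\root)\ge 1$, which holds precisely because the tree is infinite, and on the integrality of the quantities involved, so that the strict inequality $3\Gamma-2+\Omega<2\gamma(\root)$ can occur only in the fully degenerate situation $\gamma(\root)=\Gamma=1$, $\Omega=0$, i.e.\ the homogeneous tree of order $1$.
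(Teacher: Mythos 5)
Your argument is correct and is essentially the paper's own: the corollary is deduced from Corollary~\ref{cor:boundB} by observing that $3\Gamma-2+\Omega<2\gamma(\root)$ forces $\gamma(\root)=\Gamma=1$ and $\Omega=0$, i.e.\ the homogeneous tree of order $1$, so in every other case the maximum is $3\Gamma-2+\Omega$. Your integrality bookkeeping is just a slightly more explicit version of the one-line comparison the paper records immediately before the statement.
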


The result in the theorem above also holds for the little Lipschitz space.

\begin{theorem}
Let $T$ be a rooted, countable infinite and locally finite tree. Assume that $T$
is homogeneous by sectors at the level $N$. Then $B$ is bounded on $\Lip_0$.
\end{theorem}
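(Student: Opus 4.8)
The plan is to deduce boundedness of $B$ on $\Lip_0$ from the boundedness of $B$ on $\Lip$ (established in Proposition~\ref{prop:boundB} and its corollaries) by showing that $B$ maps $\Lip_0$ into itself. Since $\Lip_0$ is a closed subspace of $\Lip$ and $B$ is a bounded operator on $\Lip$, once we know $B(\Lip_0) \subseteq \Lip_0$ it follows immediately that the restriction $B\big|_{\Lip_0}$ is a bounded operator on $\Lip_0$, with norm at most $\|B\|_{\Lip}$. So the whole content of the theorem is the invariance $B(\Lip_0) \subseteq \Lip_0$.

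To prove invariance, I would take $f \in \Lip_0$, so that $f'(v) \to 0$ as $|v| \to \infty$, and show $(Bf)'(v) \to 0$ as $|v| \to \infty$. The key tool is the formula for $(Bf)'(v)$ derived in the proof of Proposition~\ref{prop:boundB}: for $v \in T^*$,
\[
(Bf)'(v) = \sum_{w\in \Chi{v}} f'(w) - \sum_{\substack{w \in \Chi{\parent{v}} \\ w\neq v}} f'(w) + (\gamma(v)-1) f'(v) + (\gamma(v) - \gamma(\parent{v})) f(\parent{v}).
\]
When $|v| > N$ the tree is homogeneous on the sector through $v$, so $\gamma(v) = \gamma(\parent{v})$; write $\gamma$ for this common value (which depends only on which level-$N$ sector $v$ lies in, and there is a finite maximum $\Gamma$ of such values). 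The last term vanishes, and we are left with the three sums, each of which is a finite sum (at most $\Gamma$ terms) of values $f'(w)$ with $|w| \geq |v| - 1$. Since $f'(w) \to 0$ as $|w| \to \infty$ and $\Gamma$ is a uniform bound on the number of terms, given $\varepsilon > 0$ we can choose $M \geq N+1$ so that $|f'(w)| < \varepsilon/(3\Gamma)$ whenever $|w| \geq M$; then for all $v$ with $|v| \geq M+1$ every index $w$ appearing above has $|w| \geq |v|-1 \geq M$, hence $|(Bf)'(v)| < \varepsilon$. This shows $(Bf)'(v) \to 0$, i.e. $Bf \in \Lip_0$.

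I do not expect any real obstacle here; the only point requiring a little care is the bookkeeping with the level $N$: one must note that ``homogeneous by sectors at level $N$'' forces $\gamma(v) = \gamma(\parent{v})$ for \emph{all} $v$ with $|v| \geq N+1$ (both $v$ and $\parent{v}$ then lie in the same sector $S_u$ for some $u$ at level $N$), and that the number of children along any such sector is bounded by $\Gamma < \infty$. With that observed, the estimate is the routine three-term argument above. I would write the proof in two or three sentences: first invoke the formula and the vanishing of the $\gamma$-difference term past level $N$, then run the $\varepsilon$-argument, then cite that $\Lip_0$ closed in $\Lip$ plus $B$ bounded on $\Lip$ gives boundedness of $B$ on $\Lip_0$.
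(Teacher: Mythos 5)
Your proposal is correct and follows essentially the same route as the paper: reduce to showing $B(\Lip_0)\subseteq\Lip_0$ (boundedness then being inherited from $B$ on $\Lip$ since $\Lip_0$ is a closed subspace), and verify invariance via the expansion of $(Bf)'(v)$ from Proposition~\ref{prop:boundB}, noting that for $|v|\geq N+1$ the term $(\gamma(v)-\gamma(\parent{v}))f(\parent{v})$ vanishes and the remaining terms involve at most $3\Gamma$ values of $f'$ at vertices of large modulus. The paper works with the form $\gamma(v)f(v)-\gamma(\parent{v})f(\parent{v})=\gamma(v)f'(v)$ rather than your rearranged identity, but this is the same three-term $\varepsilon/(3\Gamma)$ estimate.
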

\begin{proof}
Since $\Lip_0$ is a closed subspace of $\Lip$, it is enough to show
that, if $f \in \Lip_0$, then $Bf \in \Lip_0$.

Let $\epsilon>0$. Since $f \in \Lip_0$, there exists $N_1$ such that
\[
  |f'(v)|<\frac{\epsilon}{3 \Gamma}, \quad \text{ if } \quad |v|\geq N_1.
\]
By Equation \eqref{eq:bf'_estimate}, for $v \in T^*$ we have
\[
  (Bf)'(v) = \sum_{w\in \Chi{v}} f'(w)  -\sum_{w \in \Chi{\parent{v}}}
  f'(w) + \gamma(v) f(v) -\gamma(\parent{v}) f(\parent{v}).
\]
Hence, since $\gamma(v)=\gamma(\parent{v})$ for every $v\in T^*$ with
$|v|\geq N+1$, we have that if $|v|\geq \max\{N_1,N+1\}$ then
\begin{align*}
  |(Bf)'(v)|
  &\leq \Gamma \frac{\epsilon}{3 \Gamma} + \Gamma \frac{\epsilon}{3
    \Gamma} +   |\gamma(v) f(v) -\gamma(\parent{v}) f(\parent{v})| \\
  &\leq \frac23 \epsilon + \gamma(v) |f'(v)|\\
    &\leq \frac23 \epsilon + \Gamma |f'(v)|\\
  & < \epsilon.
\end{align*}
 Hence $Bf \in \Lip_0$, as desired.
\end{proof}

It is clear that the norm of $B$, as an operator on $\Lip_0$,
satisfies the same estimates as in Proposition~\ref{prop:boundB} and
Corollary~\ref{cor:boundB_L0}.

We want to prove the converse of Proposition~\ref{prop:boundB}. For
that, we need the following lemma.

\begin{lemma}\label{le:sup_gamma}
Let $T$ be a rooted, countable infinite and locally finite tree. If $\sup\{
\gamma(v) \, : \, v \in T \}=\infty$, then for every $n\in \N$ there
exists $v_n \in T$ such that $|v_n|\geq n$ and $\gamma(\parent{v_n})<\gamma(v_n)$.
\end{lemma}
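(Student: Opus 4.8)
The plan is to argue by contradiction. Suppose the conclusion fails: then there exists $n \in \N$ such that for \emph{every} vertex $v \in T$ with $|v| \geq n$ we have $\gamma(\parent{v}) \geq \gamma(v)$. (Here we must be slightly careful: the statement asserts the existence of $v_n$ with $|v_n| \geq n$ and $\gamma(\parent{v_n}) < \gamma(v_n)$, so its negation for a fixed $n$ is exactly that $\gamma(\parent{v}) \geq \gamma(v)$ whenever $|v| \geq n$.) I want to derive from this that $\sup\{\gamma(v) : v \in T\} < \infty$, contradicting the hypothesis.

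First I would observe that the negated hypothesis says that along any downward path, once we are past level $n$, the function $\gamma$ is non-increasing from parent to child. More precisely: fix any vertex $w$ with $|w| = n$. For any $v \in S_w$ with $v \neq w$, the chain $w = \nparent{v}{|v|-n}, \ldots, \parent{v}, v$ lies entirely at levels $\geq n$ except possibly $w$ itself, but in fact every vertex in this chain other than $w$ has level $\geq n+1 > n$, and $w$ has level $n$; applying $\gamma(\parent{u}) \geq \gamma(u)$ to each $u$ in the chain with $|u| \geq n+1$ (so that $\parent u$ has level $\geq n$) gives $\gamma(w) \geq \gamma(\parent v) \geq \cdots \geq \gamma(v)$. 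Hence $\gamma(v) \leq \gamma(w)$ for every $v \in S_w$. Since every vertex of $T$ at level $\geq n$ lies in $S_w$ for some $w$ at level $n$, we conclude
\[
\sup\{\gamma(v) : |v| \geq n\} \leq \max\{\gamma(w) : |w| = n\}.
\]
The right-hand side is finite because the tree is locally finite and the set $\{w : |w| = n\}$ is finite (it is contained in the $n$-children of $\root$, of which there are finitely many since each $\gamma(\cdot)$ along the way is finite). The set $\{v : |v| < n\}$ is also finite for the same reason, so $\sup\{\gamma(v) : |v| < n\} < \infty$ as well. Adding these two bounds gives $\sup\{\gamma(v) : v \in T\} < \infty$, the desired contradiction.

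The main obstacle is bookkeeping rather than depth: one must be precise about the logical negation of the quantified statement and about which vertices in a downward chain actually have level $\geq n$ (so that the parent-child inequality is applicable), and one must justify that each level set $\{v : |v| = k\}$ is finite, which follows by induction on $k$ using local finiteness (each vertex has finitely many children). Once those points are handled, the argument is a short contrapositive: failure of the conclusion forces $\gamma$ to be bounded by the maximum of its finitely many values up to level $n$ together with its (monotone, hence bounded) behavior below each level-$n$ vertex.
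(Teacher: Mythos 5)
Your proof is correct and follows essentially the same route as the paper: negate the statement, observe that $\gamma$ is then non-increasing along every descending chain past level $n$, and bound $\sup\gamma$ by the maximum of $\gamma$ over the finitely many vertices at levels $\leq n$, contradicting the hypothesis. The extra care you take with the negation and with the finiteness of level sets is sound but does not change the argument.
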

\begin{proof}
By contradiction, assume that there exists $N \in \N$ such that for
all $v \in T$ with $|v|\geq N$ we have $\gamma(v) \leq
\gamma(\parent{v})$. It follows that, for all $k \in \N$, we have that
if $|v|=N+k$, then
\[
  \gamma(v) \leq \gamma(\parent{v}) \leq \gamma(\nparent{v}{2}) \leq
  \gamma(\nparent{v}{3}) \leq \cdots  \leq \gamma(\nparent{v}{k}).  
\]
But, since $|\nparent{v}{k}|=N$, this implies that
\[
  \sup\{ \gamma(v) \, : \, |v|\geq N\} \leq \max\{ \gamma(u) \, : \,
  |u|=N \} 
\]
and hence that $\sup\{ \gamma(v) \, : \, v \in T\} \leq \max\{
\gamma(v) \, : \, |v| \leq N \} <\infty$, contradicting the hypothesis.
\end{proof}

We can now show that homogeneity by sectors is actually a necessary
condition for boundedness of $B$.

\begin{theorem}
Let $T$ be a rooted, countable infinite and locally finite tree. If $B: \Lip
\to \Lip$ is bounded, then $T$ is homogeneous by sectors.
\end{theorem}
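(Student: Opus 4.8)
The plan is to prove the contrapositive: if $T$ is not homogeneous by sectors, then $B$ is unbounded on $\Lip$. The first step is a purely combinatorial reformulation of the hypothesis. Walking along paths shows that $T$ is homogeneous by sectors if and only if there is an $N \in \N_0$ with $\gamma(v) = \gamma(\parent{v})$ for every $v \in T^*$ with $|v| > N$. Indeed, if $\gamma$ agrees with its parent's value above level $N$, then $\gamma$ is constant on each sector $S_w$ with $|w| = N$; conversely, if $T$ is homogeneous by sectors at level $N$ and $|v| \geq N+1$, then both $v$ and $\parent{v}$ lie in $S_w$ for $w = \nparent{v}{|v|-N}$, so $\gamma(v) = \gamma(w) = \gamma(\parent{v})$. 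Hence, if $T$ is \emph{not} homogeneous by sectors, then for every $n \in \N$ there is a vertex $z \in T^*$ with $|z| \geq n$ and $\gamma(z) \neq \gamma(\parent{z})$: given $n$, the negated definition supplies a vertex $w$ with $|w| = n$ and some $x \in S_w$ with $\gamma(x) \neq \gamma(w)$, and along the path from $w$ to $x$ the value of $\gamma$ must change across some edge, producing the desired $z$ at level $> n$.

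The second step is to build, for each $u \in T^*$, a norm-one function in $\Lip$ that is as steep as Lemma~\ref{le:co_ea} permits along the path to $u$ and flat everywhere past $u$. Define $f_u \in \calF$ by $f_u(\root) = 0$ and, for $x \in T^*$,
\[
  f_u'(x) = \begin{cases} 1, & \text{if } x \text{ lies on the path from } \root \text{ to } u, \\ 0, & \text{otherwise.}\end{cases}
\]
Then $\|f_u\| = \sup_{x \in T} |f_u'(x)| = 1$ because $u \neq \root$. Summing $f_u'$ along the path from $\root$ to $x$ gives $f_u(x) = |x \wedge u|$, the distance from $\root$ to the confluent of $x$ and $u$; in particular $f_u(v) = |u|$ for every $v \in S_u$.

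The third step is to evaluate the identity \eqref{eq:bf'_estimate} at $f_u$. Fix a child $v \in \Chi{u}$, so that $\parent{v} = u$. Every vertex of $\Chi{v}$ and of $\Chi{u}$ is a proper descendant of $u$, hence does not lie on the path to $u$, so $f_u'$ vanishes on both sets; and $f_u(v) = f_u(u) = |u|$ because $v, u \in S_u$. Thus \eqref{eq:bf'_estimate} collapses to
\[
  (Bf_u)'(v) = \gamma(v)\,|u| - \gamma(\parent{v})\,|u| = \bigl(\gamma(v) - \gamma(\parent{v})\bigr)\,|u|,
\]
so $\|B f_u\| \geq |\gamma(v) - \gamma(\parent{v})|\,|u|$. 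Now, given $n \in \N$, apply the first step with $n+1$ in place of $n$ to obtain $z \in T^*$ with $|z| \geq n+1 \geq 2$ (so $\parent{z} \neq \root$) and $\gamma(z) \neq \gamma(\parent{z})$; set $u := \parent{z}$ and $v := z \in \Chi{u}$. Then
\[
  \|B f_u\| \geq |\gamma(z) - \gamma(\parent{z})|\,|u| \geq |u| = |z| - 1 \geq n,
\]
while $\|f_u\| = 1$. Since $n$ is arbitrary, $B$ is unbounded on $\Lip$, which proves the contrapositive.

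I expect the only delicate point to be the choice of $f_u$. The term $\bigl(\gamma(v) - \gamma(\parent{v})\bigr) f(\parent{v})$ in \eqref{eq:bf'_estimate} is the one to exploit, and $f_u$ is engineered so that $f_u(\parent{v})$ attains the extremal size $|\parent{v}|$ allowed by Lemma~\ref{le:co_ea}, while the three remaining terms in \eqref{eq:bf'_estimate} vanish identically, so no cancellation estimates are needed. (When $\sup_{v \in T}\gamma(v) = \infty$ one could alternatively locate the vertices $z$ using Lemma~\ref{le:sup_gamma}, but the reformulation in the first step handles this case as well.)
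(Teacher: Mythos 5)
Your proof is correct, and it takes a genuinely different route from the paper's. The paper tests $B$ against the single global function $g(v)=|v|$, for which $(Bg)'(v)=(\gamma(v)-\gamma(\parent{v}))|v|+\gamma(v)$; because of the extra $\gamma(v)$ term, the argument there must proceed in two stages, first invoking Lemma~\ref{le:sup_gamma} to rule out $\sup_v\gamma(v)=\infty$ (so that $\gamma(v)$ can be absorbed as a bounded error) and only then extracting the unbounded main term. You instead use a family of localized ``ramp'' functions $f_u$ whose derivative is the indicator of the geodesic from $\root$ to $u$; this makes the three potentially troublesome terms in \eqref{eq:bf'_estimate} vanish identically at children of $u$, leaving the exact identity $(Bf_u)'(v)=(\gamma(v)-\gamma(\parent{v}))\,|u|$ with no error term, so the case distinction and Lemma~\ref{le:sup_gamma} are not needed, and the lower bound $|\gamma(v)-\gamma(\parent{v})|\ge 1$ for integers does the rest. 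You also spell out the combinatorial reformulation (homogeneity by sectors at some level is equivalent to $\gamma(v)=\gamma(\parent{v})$ for all sufficiently deep $v$) that the paper uses implicitly when it asserts the existence of the vertices $v_m$. The price of your approach is the need to verify that each $f_u$ is well defined, lies in $\Lip$ with $\|f_u\|=1$, and equals $|u|$ on all of $S_u$ --- all of which you do correctly. A pleasant by-product of your computation is that it directly exhibits $\|B\|\ge\sup\{|\gamma(v)-\gamma(\parent{v})|\,(|v|-1):v\in T^{**}\}$, which meshes with the quantity appearing in the last equivalence of Theorem~\ref{th:main}.
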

\begin{proof}
Consider the function $g \in \Lip$ given by $g(v)=|v|$. Observe that,
for every $v \in T^*$ we have
\begin{equation}\label{eq:bg'_gamma}
  \begin{split}
  (Bg)'(v)
  &=\sum_{w \in \Chi{v}} |w| - \sum_{w \in  \Chi{\parent{v}}} |w|
  \\
  &= \gamma(v) (|v|+1)- \gamma(\parent{v})(|\parent{v}|+1) \\
  &= \left(\gamma(v) - \gamma(\parent{v}) \right)|v| +
    \gamma(v).
  \end{split}
\end{equation}

First we show that $\sup\{ \gamma(v) \, : \, v \in T
\}< \infty$. By contradiction, assume that $\sup\{ \gamma(v) \, : \, v \in T
\}=\infty$. By the previous lemma, there exists a sequence $(v_n)$ in $T$ such that
$|v_n|\geq n$ and $\gamma(\parent{v_n}) < \gamma(v_n)$. But then,
Equation \eqref{eq:bg'_gamma} gives
\[
  (Bg)'(v_n) = \left(\gamma(v_n) - \gamma(\parent{v_n}) \right)|v_n| +
    \gamma(v_n) \geq \left(\gamma(v_n) - \gamma(\parent{v_n}) \right)|v_n| \geq n,
\]
so $(Bg)'$ is unbounded and hence $Bg \notin \Lip$ contradicting the
boundedness of $B$. Hence $\sup\{ \gamma(v) \, : \, v \in T \}<
\infty$, as claimed.

Now, let $\Gamma=\sup\{ \gamma(v) \, : \, v \in T \}$. If $T$ is not
homogeneous by sectors then, for every $m \in \N$, there exists $v_m
\in T$ with $|v_m|\geq m$ such that $\gamma(v_m) \neq
\gamma(\parent{v_m})$. But then Equation \eqref{eq:bg'_gamma} implies
\begin{align*}
   |(Bg)'(v_m)|
  &= |\left(\gamma(v_m) - \gamma(\parent{v_m}) \right)|v_m| +
    \gamma(v_m)| \\
  &\geq  \mod{ \gamma(v_m) - \gamma(\parent{v_m}) } \, |v_m|
    - |\gamma(v_m)| \\
  &\geq |v_m| - \gamma(v_m) \\
  &\geq m -\Gamma,
\end{align*}
which implies that $(Bg)'$ is unbounded and hence $Bg \notin \Lip$ contradicting the
boundedness of $B$. Therefore, $T$ must be homogeneous by sectors.
\end{proof}

A similar result holds for $\Lip_0$, with basically the same
proof. We include the details for the sake of completeness.
\begin{theorem}
Let $T$ be a rooted, countable infinite and locally finite tree. If $B: \Lip_0
\to \Lip_0$ is bounded, then $T$ is homogeneous by sectors.
\end{theorem}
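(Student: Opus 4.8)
The plan is to mimic the proof of the corresponding statement for $\Lip$, since the only ingredient used there is the behavior of the test function $g(v)=|v|$ under $B$, and $g$ in fact lies in $\Lip_0$'s ambient space $\Lip$ but \emph{not} in $\Lip_0$ — so a small adjustment is needed. The cleanest route is to replace $g$ by a function that \emph{does} belong to $\Lip_0$ yet still has $(Bf)'$ large on a prescribed sequence of vertices. First I would observe, exactly as in Equation~\eqref{eq:bg'_gamma}, that for any $f \in \calF$ and $v \in T^*$,
\[
  (Bf)'(v) = \sum_{w\in\Chi{v}} f(w) - \sum_{w\in\Chi{\parent{v}}} f(w),
\]
so if $f$ takes a constant value $c_k$ on level $k$ (i.e.\ $f(w)$ depends only on $|w|$), then $(Bf)'(v) = \gamma(v)\,c_{|v|+1} - \gamma(\parent{v})\,c_{|v|}$. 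Choosing $c_k = k\,\epsilon_k$ with $\epsilon_k \to 0$ slowly (for instance $\epsilon_k = 1/\log(k+2)$, or any sequence with $\epsilon_k\to 0$ and $k\epsilon_k \to \infty$) gives a function $f$ with $f'(v) = c_{|v|} - c_{|v|-1} \to 0$, hence $f \in \Lip_0$, while $c_k \to \infty$.

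Next I would run the two-step argument from the $\Lip$ proof on this $f$. Step one: if $\sup\{\gamma(v):v\in T\}=\infty$, apply Lemma~\ref{le:sup_gamma} to get a sequence $(v_n)$ with $|v_n|\geq n$ and $\gamma(\parent{v_n})<\gamma(v_n)$; then
\[
  (Bf)'(v_n) = \gamma(v_n)\,c_{|v_n|+1} - \gamma(\parent{v_n})\,c_{|v_n|} \geq \bigl(\gamma(v_n)-\gamma(\parent{v_n})\bigr) c_{|v_n|} \geq c_{|v_n|} \to \infty
\]
(using $c_k$ nondecreasing for $k$ large and $\gamma(v_n)-\gamma(\parent{v_n})\geq 1$), contradicting $Bf\in\Lip_0\subseteq\Lip$. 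So $\Gamma:=\sup\{\gamma(v):v\in T\}<\infty$. Step two: if $T$ is not homogeneous by sectors, pick for each $m$ a vertex $v_m$ with $|v_m|\geq m$ and $\gamma(v_m)\neq\gamma(\parent{v_m})$; then
\[
  |(Bf)'(v_m)| = |\gamma(v_m)c_{|v_m|+1} - \gamma(\parent{v_m})c_{|v_m|}| \geq c_{|v_m|} - \Gamma\,|c_{|v_m|+1}-c_{|v_m|}|,
\]
and since $|c_{k+1}-c_k|=|f'|$ is bounded (indeed tends to $0$) while $c_{|v_m|}\to\infty$, the right side tends to $\infty$, so $(Bf)'$ is unbounded, again contradicting $Bf\in\Lip$.

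The only real subtlety — and the one place the proof genuinely differs from the $\Lip$ case — is verifying the two requirements on the weight sequence simultaneously: $f\in\Lip_0$ forces $c_{k+1}-c_k\to 0$, while the argument needs $c_k\to\infty$. Both hold for $c_k = \sqrt{k}$ (then $c_{k+1}-c_k = 1/(\sqrt{k+1}+\sqrt k)\to 0$), which is perhaps the cleanest choice; I would just state $f(v)=\sqrt{|v|}$ and check $f'(v)\to 0$ in one line. One must also confirm the crude bound $\gamma(v)-\gamma(\parent{v})\geq 1$ is legitimate when the difference is nonzero (it is, since $\gamma$ is integer-valued). After these checks the contradiction is immediate in both steps, so I expect no serious obstacle beyond choosing the test function correctly.

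\begin{proof}
Consider the function $f \in \Lip$ given by $f(v)=\sqrt{|v|}$. For $v \in T^*$ we have
\[
  f'(v) = \sqrt{|v|}-\sqrt{|v|-1} = \frac{1}{\sqrt{|v|}+\sqrt{|v|-1}} \xrightarrow[\ |v|\to\infty\ ]{} 0,
\]
so $f \in \Lip_0$. Since $f$ depends only on $|v|$, Equation~\eqref{eq:bg'_gamma} (with $g$ replaced by $f$) shows that, for every $v \in T^*$,
\[
  (Bf)'(v) = \gamma(v)\sqrt{|v|+1} - \gamma(\parent{v})\sqrt{|v|}.
\]

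We first show $\Gamma := \sup\{\gamma(v):v\in T\}<\infty$. Suppose not. By Lemma~\ref{le:sup_gamma} there is a sequence $(v_n)$ in $T$ with $|v_n|\geq n$ and $\gamma(\parent{v_n})<\gamma(v_n)$, hence $\gamma(v_n)-\gamma(\parent{v_n})\geq 1$. Since $\sqrt{|v_n|+1}\geq\sqrt{|v_n|}$, we obtain
\[
  (Bf)'(v_n) \geq \bigl(\gamma(v_n)-\gamma(\parent{v_n})\bigr)\sqrt{|v_n|} \geq \sqrt{|v_n|} \geq \sqrt{n},
\]
so $(Bf)'$ is unbounded and $Bf\notin\Lip$, contradicting the boundedness of $B$ on $\Lip_0\subseteq\Lip$. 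Thus $\Gamma<\infty$.

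Now suppose $T$ is not homogeneous by sectors. Then for every $m\in\N$ there exists $v_m\in T$ with $|v_m|\geq m$ and $\gamma(v_m)\neq\gamma(\parent{v_m})$. Writing $\gamma(\parent{v_m})\sqrt{|v_m|} = \gamma(\parent{v_m})\sqrt{|v_m|+1} - \gamma(\parent{v_m})\bigl(\sqrt{|v_m|+1}-\sqrt{|v_m|}\bigr)$, we get
\begin{align*}
  |(Bf)'(v_m)|
  &= \bigl|\bigl(\gamma(v_m)-\gamma(\parent{v_m})\bigr)\sqrt{|v_m|+1} + \gamma(\parent{v_m})\bigl(\sqrt{|v_m|+1}-\sqrt{|v_m|}\bigr)\bigr| \\
  &\geq \bigl|\gamma(v_m)-\gamma(\parent{v_m})\bigr|\sqrt{|v_m|+1} - \gamma(\parent{v_m})\bigl(\sqrt{|v_m|+1}-\sqrt{|v_m|}\bigr) \\
  &\geq \sqrt{|v_m|+1} - \Gamma \\
  &\geq \sqrt{m} - \Gamma,
\end{align*}
using $|\gamma(v_m)-\gamma(\parent{v_m})|\geq 1$ and $\sqrt{|v_m|+1}-\sqrt{|v_m|}\leq 1$. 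Hence $(Bf)'$ is unbounded and $Bf\notin\Lip$, contradicting the boundedness of $B$. Therefore $T$ must be homogeneous by sectors.
\end{proof}
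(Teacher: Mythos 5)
Your proof is correct and follows essentially the same route as the paper's: the paper uses the radial test function $g(v)=\sum_{k=1}^{|v|}\frac{1}{k}$ in place of your $f(v)=\sqrt{|v|}$, but both are unbounded radial functions lying in $\Lip_0$, and the two-step argument (first bounding $\sup\gamma$ via Lemma~\ref{le:sup_gamma}, then extracting a sequence witnessing failure of homogeneity by sectors) is identical. All your estimates check out, so no changes are needed.
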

\begin{proof}
Consider the function $g \in \Lip_0$ given by $\ds
g(v)=\sum_{k=1}^{|v|} \frac{1}{k}$. Observe that, for every $v \in T^*$ we have
\begin{equation}\label{eq:bg'_gamma2}
  \begin{split}
  (Bg)'(v)
  &=\sum_{w \in \Chi{v}} \sum_{k=1}^{|w|} \frac{1}{k} - \sum_{w \in
    \Chi{\parent{v}}} \sum_{k=1}^{|\parent{w}|}\frac{1}{k} \\
  &=\gamma(v) \sum_{k=1}^{|v|+1} \frac{1}{k} - \gamma(\parent{v})
  \sum_{k=1}^{|\parent{v}|+1}\frac{1}{k} \\
  &= \left(\gamma(v) - \gamma(\parent{v}) \right) \sum_{k=1}^{|v|} \frac{1}{k} +
    \frac{\gamma(v)}{|v|+1}.
  \end{split}
\end{equation}

First we show that $\sup\{ \gamma(v) \, : \, v \in T
\}< \infty$. By contradiction, assume that $\sup\{ \gamma(v) \, : \, v \in T
\}=\infty$. By Lemma \ref{le:sup_gamma}, there exists a sequence $(v_n)$ in $T$ such that
$|v_n|\geq n$ and $\gamma(\parent{v_n}) < \gamma(v_n)$. But then,
Equation \eqref{eq:bg'_gamma2} gives
\[
  (Bg)'(v_n) =  \left(\gamma(v_n) - \gamma(\parent{v_n}) \right) \sum_{k=1}^{|v_n|} \frac{1}{k} +
    \frac{\gamma(v_n)}{|v_n|+1} \geq \left(\gamma(v_n) - \gamma(\parent{v_n})
    \right) \sum_{k=1}^{|v_n|} \frac{1}{k} \geq \sum_{k=1}^{n} \frac{1}{k}.
  \]
But this expression is unbounded and so is $(Bg)'$. Hence $Bg \notin \Lip_0$ contradicting the
boundedness of $B$.

Now, let $\Gamma=\sup\{ \gamma(v) \, : \, v \in T \}$. If $T$ is not
homogeneous by sectors then, for every $m \in \N$, there exists $v_m
\in T$ with $|v_m|\geq m$ such that $\gamma(v_m) \neq
\gamma(\parent{v_m})$. But then Equation \eqref{eq:bg'_gamma} implies
\begin{align*}
  |(Bg)'(v_m)|
  &= \mod{ \left(\gamma(v_m) - \gamma(\parent{v_m}) \right) \sum_{k=1}^{|v_m|} \frac{1}{k} +
    \frac{\gamma(v_m)}{|v_n|+1} } \\
  &\geq \mod{ \left(\gamma(v_m) - \gamma(\parent{v_m}) \right)
    \sum_{k=1}^{|v_m|} \frac{1}{k} } -
    \mod{ \frac{\gamma(v_m)}{|v_m|+1} } \\
  &\geq  \sum_{k=1}^{|v_m|} \frac{1}{k} -\frac{\Gamma}{|v_m|+1} \\
  &\geq \sum_{k=1}^{m} \frac{1}{k} -\Gamma, 
\end{align*}
which implies that $(Bg)'$ is unbounded and hence $Bg \notin \Lip_0$ contradicting the
boundedness of $B$. Therefore, $T$ must be homogeneous by sectors.
\end{proof}

The following proposition characterizes trees that are homogeneous by
sectors in terms of a combinatorial quantity.

\begin{proposition}
Let $T$ be a rooted, countable infinite and locally finite tree. Then $\sup \{
|\gamma(v)-\gamma(\parent{v})| \, |v| \, : \, v \in T^* \} < \infty$ if
and only if $T$ is homogeneous by sectors.
\end{proposition}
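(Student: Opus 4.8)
The plan is to prove both implications by exploiting one elementary observation: for $v \in T^*$ the quantity $\mod{\gamma(v)-\gamma(\parent{v})}$ is a nonnegative integer, so finiteness of
\[
  M := \sup\{ \mod{\gamma(v)-\gamma(\parent{v})}\,\mod{v} \, : \, v \in T^* \}
\]
forces $\gamma(v)=\gamma(\parent{v})$ for all sufficiently deep $v$, which is exactly what ``homogeneous by sectors'' records.

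For the forward direction I would assume $M<\infty$ and fix any integer $N > M$. If $v \in T^*$ has $\mod{v} > M$, then $\mod{\gamma(v)-\gamma(\parent{v})} \le M/\mod{v} < 1$, and being a nonnegative integer it must equal $0$; hence $\gamma(v)=\gamma(\parent{v})$. Now take any $v$ with $\mod{v}=N$ and any $w \in S_v$, and write the (downward) path $v=w_0 \sim w_1 \sim \cdots \sim w_k = w$ with $w_i \in \Chi{w_{i-1}}$, so that $\mod{w_i} = N+i > M$ for every $i \ge 1$. Applying the previous step along this path gives $\gamma(w)=\gamma(w_{k-1})=\cdots=\gamma(w_0)=\gamma(v)$. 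Thus $\gamma$ is constant on every sector $S_v$ with $\mod{v}=N$, i.e.\ $T$ is homogeneous by sectors at level $N$.

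For the converse I would assume $T$ is homogeneous by sectors at some level $N$ and estimate the supremum vertex by vertex. If $v \in T^*$ has $\mod{v} \ge N+1$, let $u := \nparent{v}{\,\mod{v}-N}$ be the ancestor of $v$ at level $N$; then both $v$ and $\parent{v}$ lie in $S_u$, so homogeneity yields $\gamma(v)=\gamma(u)=\gamma(\parent{v})$ and the corresponding term vanishes. The only remaining vertices are those with $1 \le \mod{v} \le N$, of which there are finitely many since $T$ is locally finite; over this finite set the quantity $\mod{\gamma(v)-\gamma(\parent{v})}\,\mod{v}$ is bounded by some constant. Taking the maximum of $0$ and that constant shows $M<\infty$.

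I do not expect a serious obstacle; the only points needing a little care are the integrality argument (a nonnegative integer that is strictly less than $1$ is $0$) and the bookkeeping in the converse, namely that for $\mod{v}\ge N+1$ both $v$ and $\parent{v}$ sit in the sector rooted at the same level-$N$ ancestor, together with invoking local finiteness to dispose of the finitely many shallow vertices.
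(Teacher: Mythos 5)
Your proof is correct and rests on the same key observation as the paper's: since $\gamma(v)-\gamma(\parent{v})$ is an integer, a nonzero difference at depth $\mod{v}$ contributes at least $\mod{v}$ to the supremum. The only cosmetic difference is that you run the hard direction directly (integrality forces the difference to vanish beyond depth $M$, then propagate along paths into each sector), whereas the paper argues by contradiction; your version has the minor virtue of making explicit the path argument that the paper leaves implicit when it passes from ``not homogeneous by sectors'' to the existence of arbitrarily deep vertices with $\gamma(v)\neq\gamma(\parent{v})$.
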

\begin{proof}
  First assume that $T$ is homogeneous by sectors at the level
  $N$. Then, for all $|v|>N$, we have $\gamma(v)=\gamma(\parent{v})$
  and hence
  \[
    \{ |\gamma(v)-\gamma(\parent{v})| \, |v| \, : \, v \in T^* \} =
    \{ |\gamma(v)-\gamma(\parent{v})| \, |v| \, : \, 0 < |v| \leq
    N \} \cup \{ 0 \}
  \]
  which is a finite set. Hence,  $\sup \{
  |\gamma(v)-\gamma(\parent{v})| \, |v| \, : \, v \in T^* \} < \infty$.

  Now, assume that  $\sup \{ |\gamma(v)-\gamma(\parent{v})| \, |v| \,
  : \, v \in T^* \} < \infty$. If $T$ was not homogeneous by sectors,
  for every $m \in \N$, there would exist $v_m \in T$ with $|v_m|\geq
  m$ and such that $\gamma(v_m)\neq \gamma(\parent{v_m})$. But then
  \[
    \mod{ \gamma(v_m)-\gamma(\parent{v_m}) } \, |v_m| \geq |v_m|
    \geq m
  \]
  and therefore $\sup \{|\gamma(v)-\gamma(\parent{v})| \, |v| \, : \,
  v \in T^* \}=\infty$, which is a contradition. Hence $T$ is homogeneous by sectors.
\end{proof}

We can summarize the results of this section in the following theorem.
\begin{theorem}\label{th:main}
Let $T$ be a rooted, countable infinite and locally finite tree. Let $B$ be the
backward shift. The following are equivalent.
\begin{itemize}
  \item $B: \Lip \to \Lip$ is bounded.
  \item $B: \Lip_0 \to \Lip_0$ is bounded.
  \item $T$ is homogeneous by sectors.
  \item $\sup \{ |\gamma(v)-\gamma(\parent{v})| \, |v| \, : \, v \in
      T^* \} < \infty$.
    \end{itemize}
    Moreover, in this case, $\|B \| \leq \max\{2\gamma(\root), \Lambda \}$.
\end{theorem}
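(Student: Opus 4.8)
The plan is that essentially all of the work has already been carried out in the preceding results of this section, so the proof of Theorem~\ref{th:main} amounts to organizing those results into a single cycle of equivalences. I would take the condition ``$T$ is homogeneous by sectors'' as the hub and verify two equivalences against it, plus one more side equivalence.

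Concretely, write (i) for ``$B\colon\Lip\to\Lip$ is bounded'', (ii) for ``$B\colon\Lip_0\to\Lip_0$ is bounded'', (iii) for ``$T$ is homogeneous by sectors'', and (iv) for ``$\sup\{|\gamma(v)-\gamma(\parent{v})|\,|v|:v\in T^*\}<\infty$''. First, the equivalence (iii)$\Leftrightarrow$(iv) is exactly the last Proposition above. For (iii)$\Rightarrow$(i) I would invoke Proposition~\ref{prop:boundB}, which moreover gives the bound $\|B\|\le\max\{2\gamma(\root),\Lambda\}$ and hence supplies the ``moreover'' clause of the theorem. For (iii)$\Rightarrow$(ii) I would invoke the Theorem establishing that $B$ maps $\Lip_0$ into itself when $T$ is homogeneous by sectors (using, as recalled earlier, that $\Lip_0$ is a closed subspace of $\Lip$, so that a bounded operator on $\Lip$ which preserves $\Lip_0$ restricts to a bounded operator there). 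Finally, (i)$\Rightarrow$(iii) and (ii)$\Rightarrow$(iii) are the two Theorems proved just above, via the test functions $g(v)=|v|$ and $g(v)=\sum_{k=1}^{|v|}\frac{1}{k}$ respectively. Chaining these: (i)$\Rightarrow$(iii)$\Rightarrow$(i) gives (i)$\Leftrightarrow$(iii); (ii)$\Rightarrow$(iii)$\Rightarrow$(ii) gives (ii)$\Leftrightarrow$(iii); combined with (iii)$\Leftrightarrow$(iv), all four conditions are equivalent, and in that case the norm estimate holds by Proposition~\ref{prop:boundB}.

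The one thing to check at this assembly stage is that the cycle genuinely closes, i.e. that none of the four cited statements secretly depends on Theorem~\ref{th:main} — which is the case, since each was proved directly. So there is no real obstacle in the assembly itself; the genuine content lies inside the cited results. The delicate direction there is the converse: one needs $g(v)=|v|$ (resp. $g(v)=\sum_{k=1}^{|v|}\frac{1}{k}$) to be an ``extremal'' element of $\Lip$ (resp. $\Lip_0$) whose image under $B$ already detects any fluctuation of $\gamma$ across an edge far from $\root$. In the forward direction the key input is Lemma~\ref{le:co_ea}, which bounds $|f(\parent{v})|$ by $(|\parent{v}|+1)\|f\|$; this is exactly why the quantity $|\gamma(v)-\gamma(\parent{v})|\,|v|$, and hence $\Lambda$, appears in the norm estimate.
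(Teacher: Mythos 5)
Your proposal is correct and matches the paper exactly: the paper offers no separate proof of Theorem~\ref{th:main}, presenting it merely as a summary of the section, and the assembly you describe --- Proposition~\ref{prop:boundB} and the $\Lip_0$ boundedness theorem for (iii)$\Rightarrow$(i),(ii), the two converse theorems via the test functions $g(v)=|v|$ and $g(v)=\sum_{k=1}^{|v|}\frac1k$, and the final proposition for (iii)$\Leftrightarrow$(iv) --- is precisely the intended argument, including the observation that the norm bound comes from Proposition~\ref{prop:boundB}.
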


We can obtain some simple estimates from below for the norm of the
backward shift. Indeed, assume $T$ is homogeneous by sectors and let
$g_1:T \to \C$ be defined as (see Figure~\ref{fig:h1} for an example)
\[
  g_1(v)=  \begin{cases}
    1, & \text{ if } v = \root, \\
    2, & \text{ if } v \in \Chi{\root},\\
    1, & \text{ if } v \in \nChi{\root}{2},\\
    0, & \text{ in any other case.}
  \end{cases}
\]

\begin{figure}[!htbp]
\centering
\begin{tikzpicture}[level 1/.style={sibling distance=6cm}, level
  2/.style={sibling distance=3cm}, level 3/.style={sibling
    distance=1.5cm}, level 4/.style={sibling distance=1cm}, scale=0.8,emph/.style={edge from parent/.style={black, dashed,draw}}]
\node[draw,circle,label=left:{1}] {$\root$} [grow'=down]
child{node[draw,circle,label=left:{2}]  {}
    child{node[draw,circle,label=left:{1}] {}
      child{node[draw,circle,label=left:{0}] {}
       child[emph]{}
       child[emph]{}
     }
     }
         child{node[draw,circle,label=left:{1}] {}
      child{node[draw,circle,label=left:{0}] {}
       child[emph]{}
       child[emph]{}}
     child{node[draw,circle,label=left:{0}] {}
        child[emph]{}}}  
      child{node[draw,circle,label=left:{1}]{}
        child{node[draw,circle,label=left:{0}]{}
          child[emph]{} }
      }
  } 
child{node[draw,circle,label=left:{2}]  {}
  child{node[draw,circle,label=left:{1}] {}
    child{node[draw,circle,label=left:{0}] {}
      child[emph]{}
      child[emph]{}}}
    child{node[draw,circle,label=left:{1}] {}
    child{node[draw,circle,label=left:{0}] {}
      child[emph]{}}
    child{node[draw,circle,label=left:{0}] {}
      child[emph]{}
      child[emph]{}}}
               }; 
\end{tikzpicture}
\caption{The values of the function $g_1$ are on the left of each vertex.}\label{fig:h1}
\end{figure}
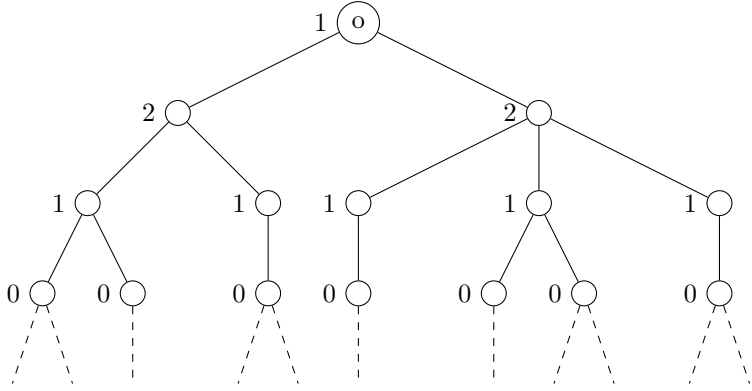

Clearly, $g_1 \in \Lip$ and $\| g_1 \|=1$. But we also have
\[
  (Bg_1)(v)=  \begin{cases}
    2 \gamma(\root), & \text{ if } v = \root, \\
    \gamma(v), & \text{ if } v \in \Chi{\root},\\
    0, & \text{ elsewehere.}
  \end{cases}
\]
Then $\| Bg_1 \| \geq 2 \gamma(\root)$ and thus $\| B \| \geq 2
\gamma(\root)$. This estimate is part of the next proposition.

\begin{proposition}\label{prop:estimate_B_below}
Let $T$ be a rooted, countable infinite and locally finite tree. Assume that $T$
is homogeneous by sectors at the level $N$. Then $\| B \| \geq \max\{2\gamma(\root), 3 \Gamma'-2 \}$, where $\Gamma':=\max\{ \gamma(v) \, : \, 0<|v|\leq N \}$.
\end{proposition}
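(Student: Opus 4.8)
The plan is to establish the two lower bounds in the maximum separately, in each case by evaluating $B$ on an explicit norm-one test function. The estimate $\|B\| \geq 2\gamma(\root)$ has already been obtained from the function $g_1$ in the discussion preceding the statement, so the remaining task is to produce $f \in \Lip$ with $\|f\| = 1$ and $\|Bf\| \geq 3\Gamma' - 2$.

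To build $f$, I would fix a vertex $v_0$ with $1 \leq |v_0| \leq N$ and $\gamma(v_0) = \Gamma'$, set $u := \parent{v_0}$, and arrange that all four summands on the right-hand side of the identity
\[
(Bf)'(v_0) = \sum_{w\in \Chi{v_0}} f'(w) - \sum_{\substack{w \in \Chi{u} \\ w\neq v_0}} f'(w) + (\gamma(v_0)-1) f'(v_0) + (\gamma(v_0) - \gamma(u)) f(u)
\]
(established inside the proof of Proposition~\ref{prop:boundB}) are nonnegative. Explicitly: set $f'(w) = 1$ for $w \in \Chi{v_0}$, set $f'(v_0) = 1$, set $f'(w) = -1$ for the siblings $w \in \Chi{u}$ with $w \neq v_0$, and set $f'(w) = \varepsilon_0$ for every vertex $w$ on the geodesic from $\root$ to $u$, where $\varepsilon_0 := 1$ if $\gamma(v_0) \geq \gamma(u)$ and $\varepsilon_0 := -1$ otherwise; let $f'$ vanish on all other vertices. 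A short check shows these four sets of vertices are pairwise disjoint, so $f$ is well defined and $\|f\| = 1$, and one computes $f(u) = \varepsilon_0(|u|+1) = \varepsilon_0 |v_0|$.

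Substituting into the displayed identity yields
\[
(Bf)'(v_0) = 2\gamma(v_0) + \gamma(u) - 2 + |\gamma(v_0) - \gamma(u)|\,|v_0| \geq 2\Gamma' - 2 + \gamma(u) + |\Gamma' - \gamma(u)|,
\]
where the last inequality uses $|v_0| \geq 1$ and $\gamma(v_0) = \Gamma'$. Since $\gamma(u) + |\Gamma' - \gamma(u)| \geq \Gamma'$ for every integer $\gamma(u) \geq 0$, this forces $\|Bf\| \geq |(Bf)'(v_0)| \geq 3\Gamma' - 2$, and combining with $\|B\| \geq 2\gamma(\root)$ completes the proof. I do not foresee a genuine obstacle; the only points needing care are the disjointness of the four prescribed vertex sets (which guarantees $\|f\| = 1$ rather than something smaller) and the observation that $\gamma(u)$ can exceed $\Gamma'$ only when $u = \root$ — a possibility the elementary inequality above absorbs automatically.
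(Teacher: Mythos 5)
Your proof is correct, but it takes a genuinely different route from the paper's. The paper exhibits an explicit ``tent'' function $g_2$ equal to $1$ at $\parent{u^*}$, $2$ at $u^*$, and $3,2,1$ on the first three generations of descendants of $u^*$ (zero elsewhere), and reads off $(Bg_2)'(u^*)=3\gamma(u^*)-2$ by direct evaluation; the term involving $f(\parent{u^*})$ plays no role there. You instead reverse-engineer the test function from the four-term decomposition of $(Bf)'(v)$ established in the proof of Proposition~\ref{prop:boundB}, choosing the signs of $f'$ on $\Chi{v_0}$, on $\{v_0\}$, on the siblings of $v_0$, and along the root-to-$u$ geodesic so that all four contributions are nonnegative; your verification of the disjointness of these four vertex sets and of $f(u)=\varepsilon_0|v_0|$ is sound, and the elementary inequality $\gamma(u)+|\Gamma'-\gamma(u)|\ge\Gamma'$ correctly absorbs the case $\gamma(u)>\Gamma'$ (possible only for $u=\root$). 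What your construction buys is strictly more than the statement asks: it shows $\|B\|\ge 2\gamma(v)+\gamma(\parent{v})-2+|\gamma(v)-\gamma(\parent{v})|\,|v|$ for every $v\in T^*$ with $\gamma(v)\ge 1$, which is exactly the quantity appearing in the definition of $\Lambda$ for such $v$; this comes close to matching the upper bound of Proposition~\ref{prop:boundB} and is directly relevant to the sharpness question the authors leave open at the end of Section~4 for levels $N\ge 2$. The paper's construction is shorter and more self-contained, but yields only the weaker constant $3\Gamma'-2$.
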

\begin{proof}
Let $u^* \in T^*$ such that $\gamma(u^*)=\Gamma'$. Define the function
$g_2: T \to \C$ as (see Figure~\ref{fig:h2} for an example)
\[
    g_2(v)=\begin{cases}
      1,& \text{ if } v=\parent{u^*},\\
      2,& \text{ if } v=u^*,\\
      3,& \text{ if } v \in \Chi{u^*}\\
      2,& \text{ if } v \in \nChi{u^*}{2},\\
      1,& \text{ if } v \in \nChi{u^*}{3},\\
      0,& \text{ in any other case.}
    \end{cases}
\]  

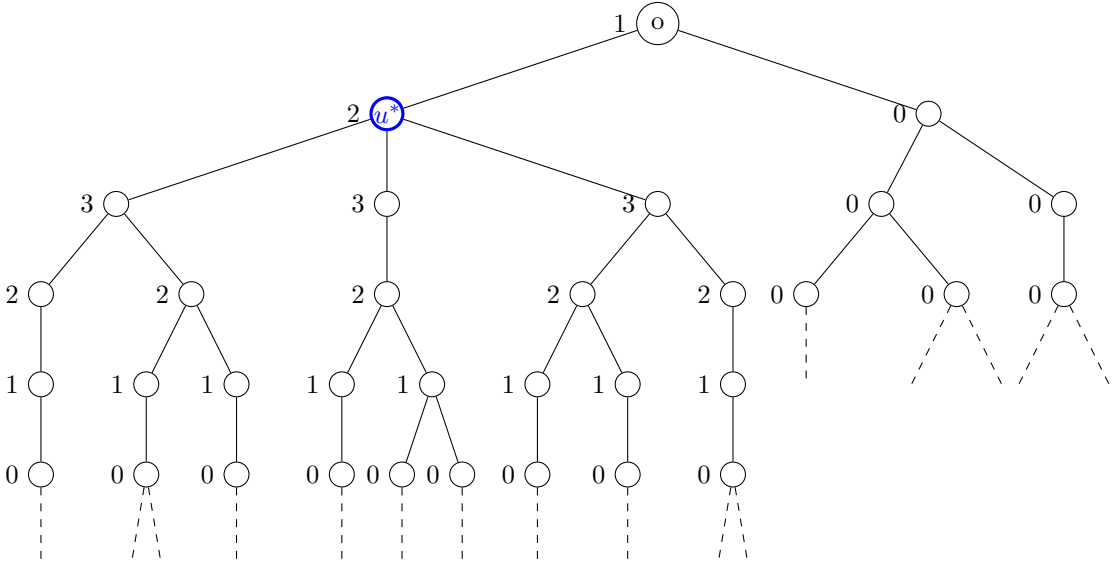
\begin{figure}[!htbp]
\centering
\begin{tikzpicture}[level 1/.style={sibling distance=9cm}, level
  2/.style={sibling distance=4.5cm}, level 3/.style={sibling
    distance=2.5cm}, level 4/.style={sibling distance=1.5cm},level
  5/.style={sibling distance=1cm}, level 6/.style={sibling distance=0.5cm}, scale=0.8,emph/.style={edge from parent/.style={black, dashed,draw}}]
\node[draw,circle,label=left:{1}] {$\root$} [grow'=down]
child{node[draw,circle,label=left:{0}]  {}
    child{node[draw,circle,label=left:{0}] {}
      child{node[draw,circle,label=left:{0}] {}
       child[emph]{}
       child[emph]{}
     }
     }
     child{node[draw,circle,right=1cm,label=left:{0}] {}
      child{node[draw,circle,label=left:{0}] {}
       child[emph]{}
       child[emph]{}}
     child{node[draw,circle,label=left:{0}] {}
        child[emph]{}}}
  } 
child{node[arn_r,draw,circle,label=left:{2}]  {$u^*$}
  child{node[draw,circle,label=left:{3}] {}
    child{node[draw,circle,label=left:{2}] {}
      child{node[draw,circle,label=left:{1}] {}
        child{node[draw,circle,label=left:{0}] {}
         child[emph]{}
         child[emph]{}}}}
  child{node[draw,circle,label=left:{2}] {}
    child{node[draw,circle,label=left:{1}] {}
      child{node[draw,circle,label=left:{0}] {}
        child[emph]{}}}
    child{node[draw,circle,label=left:{1}] {}
      child{node[draw,circle,label=left:{0}] {}
        child[emph]{}}}
    }}
  child{node[draw,circle,label=left:{3}] {}
  child{node[draw,circle,label=left:{2}] {}
    child{node[draw,circle,label=left:{1}] {}
      child{node[draw,circle,label=left:{0}] {}
        child[emph]{}}
    child{node[draw,circle,label=left:{0}] {}
        child[emph]{}}}
    child{node[draw,circle,label=left:{1}] {}
      child{node[draw,circle,label=left:{0}] {}
        child[emph]{}}}
    }}
  child{node[draw,circle,label=left:{3}] {}
    child{node[draw,circle,label=left:{2}] {}
      child{node[draw,circle,label=left:{1}] {}
        child{node[draw,circle,label=left:{0}] {}
          child[emph]{}}}
      child{node[draw,circle,label=left:{1}] {}
        child{node[draw,circle,label=left:{0}] {}
          child[emph]{}
          child[emph]{}}}}
  child{node[draw,circle,label=left:{2}] {}
      child{node[draw,circle,label=left:{1}] {}
      child{node[draw,circle,label=left:{0}] {}
        child[emph]{}}}
    }}
               }; 
\end{tikzpicture}
\caption{The values of the function $g_2$ are on the left of each vertex.}\label{fig:h2}
\end{figure}

Clearly, $g_2 \in \Lip$ and  $\| g_2 \|=1$. It is easy to check that
$(Bg_2)(u^*)=3 \gamma(u^*)$ and $(Bg_2)(\parent{u^*})=2$. Hence
$(Bg_2)'(u^*)=3 \gamma(u^*)-2 = 3 \Gamma'-2$. Therefore, $\| B \| \geq 3
\Gamma'-2$. By the calculations before the statement of this
proposition, it follows that $\| B \| \geq \max\{2\gamma(\root), 3
\Gamma'-2 \}$.
\end{proof}

Observe that the same estimate holds for $B$ as operator on
$\Lip_0$. For a particular type of trees, the above estimate can be
improved.
\begin{proposition}\label{prop:estimate_B_below2}
Let $T$ be a rooted, countable infinite and locally finite tree. Assume that $T$
is homogeneous by sectors at the level $N$. Let $\Gamma'':=\max\{
\gamma(v) \, : \, |v|=1 \}$. 
Then $\| B \| \geq  \Gamma'' + 2 \gamma(\root) -2$.
\end{proposition}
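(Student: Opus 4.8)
The plan is to exhibit an explicit test function $g$ with $\|g\|=1$ lying in $\Lip_0$ (hence in $\Lip$) for which $\bigl|(Bg)'(u^*)\bigr| \geq \Gamma'' + 2\gamma(\root) - 2$, where $u^* \in \Chi{\root}$ is chosen so that $\gamma(u^*)=\Gamma''$ (the maximum defining $\Gamma''$ is attained because there are only finitely many vertices at level $1$). Since $T$ is homogeneous by sectors, $B$ is bounded by Theorem~\ref{th:main}, so this immediately gives $\|B\| \geq \|Bg\| \geq \bigl|(Bg)'(u^*)\bigr| \geq \Gamma'' + 2\gamma(\root) - 2$, and the same argument runs verbatim for $B$ on $\Lip_0$. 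The design of $g$ is dictated by the identity $(Bg)'(u^*) = (Bg)(u^*) - (Bg)(\root) = \sum_{w\in\Chi{u^*}} g(w) - \sum_{w\in\Chi{\root}} g(w)$: I want $g$ constant and as large as possible on $\Chi{u^*}$ (which has $\Gamma''$ elements) and constant and as small (negative) as possible on $\Chi{\root}\setminus\{u^*\}$ (which has $\gamma(\root)-1$ elements), with the values at $\root$ and $u^*$ acting as the hinge that keeps all increments bounded by $1$.

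Concretely, I would set $g(\root) = -1$, $g(u^*) = 0$, $g(w) = -2$ for $w \in \Chi{\root}\setminus\{u^*\}$, $g(w) = 1$ for $w \in \Chi{u^*}$, and then extend $g$ to the remaining vertices so that in each sector below $\Chi{u^*}$ it decreases by one per level from $1$ to $0$ and stays $0$, while in each sector below $\Chi{\root}\setminus\{u^*\}$ it increases by one per level from $-2$ to $0$ and stays $0$ (so that $g'$ has finite support, hence $g \in \Lip_0$). The first routine check is that $\|g\|=1$: every increment $g'(v)$ — including $g'(\root)=g(\root)=-1$ — lies in $\{-1,0,1\}$, and $g'(u^*) = g(u^*)-g(\root) = 1$, so $\sup_{v}|g'(v)| = 1$.

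Then I would compute directly from the definition of $B$: $(Bg)(\root) = \sum_{w\in\Chi{\root}} g(w) = g(u^*) + (\gamma(\root)-1)(-2) = -2\gamma(\root)+2$, and $(Bg)(u^*) = \sum_{w\in\Chi{u^*}} g(w) = \Gamma''\cdot 1 = \Gamma''$. Consequently $(Bg)'(u^*) = (Bg)(u^*) - (Bg)(\parent{u^*}) = \Gamma'' - (-2\gamma(\root)+2) = \Gamma'' + 2\gamma(\root) - 2$, and since $\gamma(\root)\geq 1$ this quantity is nonnegative, so the absolute value is harmless and we conclude $\|B\| \geq \Gamma'' + 2\gamma(\root)-2$, both on $\Lip$ and on $\Lip_0$.

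I do not anticipate a genuine obstacle: the only real content is picking the right test function, and in particular realizing that the correct value at the root is $-1$ rather than $0$ or $1$ — this comes from maximizing $(\Gamma''-\gamma(\root))\,g(\root) + 2\Gamma'' + \gamma(\root) - 2$ over $g(\root) \in [-1,1]$, the case $\gamma(\root) > \Gamma''$ being exactly the one that makes the new estimate beat the bound $\|B\|\geq 2\gamma(\root)$ from before Proposition~\ref{prop:estimate_B_below}. The small bookkeeping points are that the prescribed values genuinely extend to a function of norm exactly $1$ in $\Lip_0$ (the decay steps are all of size $1$, so nothing can push $\|g\|$ above $1$) and that $\Gamma'' + 2\gamma(\root) - 2 \geq 0$, so the inequality is not vacuous.
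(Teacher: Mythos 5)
Your proposal is correct and is essentially identical to the paper's own proof: the test function you construct is exactly the function $g_3$ used there (same values $-1,0,1,-2$ at $\root$, $u^*$, $\Chi{u^*}$, $\Chi{\root}\setminus\{u^*\}$, with the same one-step-per-level decay to $0$), and the computation of $(Bg)'(u^*)=\Gamma''+2\gamma(\root)-2$ matches. The only addition is your explicit remark that the function also lies in $\Lip_0$, which the paper handles with a one-line observation after the proof.
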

\begin{proof}
Let $u^* \in T$, with $|u^*|=1$, be such that $\gamma(u^*)=\Gamma''$. Define the
function $g_3: T \to \C$ as (see Figure~\ref{fig:h3} for an example)
\[
    g_3(v)=\begin{cases}
      -1,& \text{ if } v=\root,\\
      0,& \text{ if } v=u^*,\\
      1,& \text{ if } v \in \Chi{u^*}\\
      -2,& \text{ if } v \in \Chi{\root}, v\neq u^* \\
      -1,& \text{ if } v \in \nChi{\root}{2}, v \notin \Chi{u^*},\\
      0,& \text{ in any other case.}
    \end{cases}
\]  

\begin{figure}[!htbp]
\centering
\begin{tikzpicture}[level 1/.style={sibling distance=7cm}, level
  2/.style={sibling distance=3cm}, level 3/.style={sibling
    distance=1cm}, level 4/.style={sibling distance=0.5cm},
  scale=0.9,emph/.style={edge from parent/.style={black,
      dashed,draw}}]
  \node[draw,circle,label=left:{-1}] {$\root$} [grow'=down]
  child{node[draw,circle,label=left:{-2}]  {}
    child{node[draw,circle,label=left:{-1}] {}
      child{node[draw,circle,label=left:{0}] {}
        child[emph]{}
        child[emph]{}
      }
    }
    child{node[draw,circle,right=1cm,label=left:{-1}] {}
      child{node[draw,circle,label=left:{0}] {}
        child[emph]{}
        child[emph]{}}
      child{node[draw,circle,label=left:{0}] {}
        child[emph]{}}}
  }
  child{node[arn_r,draw,circle,label=left:{0}]  {$u^*$}
    child{node[draw,circle,label=left:{1}] {}
      child{node[draw,circle,label=left:{0}] {}
        child[emph]{}}}
    child{node[draw,circle,label=left:{1}] {}
      child{node[draw,circle,label=left:{0}] {}
        child[emph]{}
        child[emph]{}}
      child{node[draw,circle,label=left:{0}] {}
        child[emph]{}}}
    child{node[draw,circle,label=left:{1}] {}
      child{node[draw,circle,label=left:{0}] {}
        child[emph]{}
        child[emph]{}}
      child{node[draw,circle,label=left:{0}] {}
        child[emph]{}}
      child{node[draw,circle,label=left:{0}] {}
        child[emph]{}}}
    child{node[draw,circle,label=left:{1}] {}
      child{node[draw,circle,label=left:{0}] {}
        child[emph]{}
        child[emph]{}}
    }
  };
\end{tikzpicture}
\caption{The values of the function $g_3$ are on the left of each vertex.}\label{fig:h3}
\end{figure}

Clearly, $g_3 \in \Lip$ and it is straightforward to check that $\| g_3
\|=1$. But also, $(Bg_3)(u^*)=\gamma(u^*)=\Gamma''$ and $(Bg_3)(\parent{u^*})=-2(\gamma(\root)-1)$. Hence
$(Bg_3)'(u^*)=\Gamma''+2\gamma(\root) -2 $. Therefore, $\| B \| \geq
\Gamma'' + 2 \gamma(\root) -2$, as desired.
\end{proof}

It is easily verified that the estimate in Proposition
\ref{prop:estimate_B_below2} is better than the estimate in
Proposition \ref{prop:estimate_B_below} if $\gamma(\root)>\Gamma''
= \Gamma'>2$.

How good are the estimates in Propositions \ref{prop:boundB} and
\ref{prop:estimate_B_below}? In the next section, we will show that
for a homogeneous tree, we can find the precise value of $\|B\|$. For
now, we show what happens for the case of a tree that is homogeneous
by sectors at the level $N=1$.

\begin{proposition}
Let $T$ be a rooted, countable infinite and locally finite tree. Assume that $T$
is homogeneous by sectors at the level $1$. Then
\[
  \| B \| = \max\{2\gamma(\root), \Lambda \},
\]
both as an operator on $\Lip$ and on $\Lip_0$.
\end{proposition}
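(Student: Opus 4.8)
The plan is to combine the upper bound already established with a matching lower bound. By Proposition~\ref{prop:boundB} we have $\|B\|\le\max\{2\gamma(\root),\Lambda\}$ on $\Lip$, and the same estimate holds on $\Lip_0$ by the remark following the boundedness theorem for $\Lip_0$; so it suffices to prove the reverse inequality, and since the test functions I use will lie in $\Lip_0$, it is enough to exhibit them there. The inequality $\|B\|\ge 2\gamma(\root)$ is already in hand, established via the finitely supported function $g_1$ (see Proposition~\ref{prop:estimate_B_below}). Hence everything reduces to showing $\|B\|\ge\Lambda$.

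Since $T$ is homogeneous by sectors at level $1$, the quantity $\Lambda$ is a maximum over the finitely many vertices with $|v|=1$; I would fix $v^*$ with $|v^*|=1$ attaining it and write $\gamma^*:=\gamma(v^*)$, $\gamma_0:=\gamma(\root)$, so that $\parent{v^*}=\root$ and $\Lambda=\gamma^*+\gamma_0-1+|\gamma^*-1|+|\gamma^*-\gamma_0|$. The idea is to build a function whose ``derivative'' realizes an optimal sign pattern on the vertices appearing in the formula \eqref{eq:bf'_estimate} for $(Bf)'(v^*)$. Concretely, choose signs $\sigma_0\in\{-1,1\}$ with $(\gamma^*-\gamma_0)\sigma_0=|\gamma^*-\gamma_0|$ and $\tau\in\{-1,1\}$ with $(\gamma^*-1)\tau=|\gamma^*-1|$, and define $f:T\to\C$ by $f(\root)=\sigma_0$, $f(v^*)=\sigma_0+\tau$, $f(w)=\sigma_0+\tau+1$ for every proper descendant $w$ of $v^*$, and $f(w)=\sigma_0-1$ for every $w\in\Chi{\root}\setminus\{v^*\}$ and every proper descendant of such a $w$. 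One checks that this assigns a value to each vertex consistently, that $f'$ takes values in $\{-1,0,1\}$ and vanishes outside the finite set $\{\root\}\cup\Chi{\root}\cup\Chi{v^*}$ — so $f\in\Lip_0$ and $\|f\|=1$ — with $f'(w)=1$ on $\Chi{v^*}$, $f'(w)=-1$ on $\Chi{\root}\setminus\{v^*\}$, $f'(v^*)=\tau$, and $f'(\root)=f(\root)=\sigma_0$.

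With $f$ in hand, the identity for $(Bf)'(v)$ derived from \eqref{eq:bf'_estimate}, evaluated at $v=v^*$, gives
\[
(Bf)'(v^*)=\sum_{w\in\Chi{v^*}}f'(w)-\sum_{\substack{w\in\Chi{\root}\\ w\neq v^*}}f'(w)+(\gamma^*-1)f'(v^*)+(\gamma^*-\gamma_0)f(\root)=\gamma^*+(\gamma_0-1)+|\gamma^*-1|+|\gamma^*-\gamma_0|=\Lambda,
\]
so $\|B\|\ge\|Bf\|\ge|(Bf)'(v^*)|=\Lambda$. Together with $\|B\|\ge 2\gamma(\root)$ and the upper bound of Proposition~\ref{prop:boundB}, this yields $\|B\|=\max\{2\gamma(\root),\Lambda\}$ on $\Lip$; and since both $f$ and $g_1$ lie in $\Lip_0$, the same chain of inequalities gives the identity on $\Lip_0$ as well.

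There is no serious obstacle here; the only points needing care are the bookkeeping — checking that the piecewise definition of $f$ is consistent and that $f'$ vanishes outside the stated finite set (so $f\in\Lip_0$ with $\|f\|=1$) — and the degenerate case $\gamma^*=0$, where $\Chi{v^*}=\varnothing$ and the $|\gamma^*-1|$ contribution comes from $f'(v^*)=\tau=-1$, so that $\|f\|=1$ still holds and the computation of $(Bf)'(v^*)$ goes through unchanged.
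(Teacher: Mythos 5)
Your proof is correct, and it takes a genuinely different route from the paper's. The paper orders the children $v_1,\dots,v_n$ of the root by $\gamma(v_j)$, computes $\Lambda$ explicitly in three cases ($\gamma(v_n)=1$; $\gamma(v_n)\ge 2$ with $\gamma(v_n)\ge n$; $\gamma(v_n)\ge 2$ with $n>\gamma(v_n)$), and in each case matches the upper bound against one of the previously established lower bounds $2\gamma(\root)$, $3\Gamma'-2$ (Proposition~\ref{prop:estimate_B_below}) or $\Gamma''+2\gamma(\root)-2$ (Proposition~\ref{prop:estimate_B_below2}). You instead prove the single inequality $\|B\|\ge\Lambda$ directly, by fixing a vertex $v^*$ at level $1$ attaining $\Lambda$ and building one test function whose two free signs $\sigma_0,\tau$ are chosen so that all four terms of the identity
\[
(Bf)'(v^*)=\sum_{w\in\Chi{v^*}}f'(w)-\sum_{\substack{w\in\Chi{\root}\\ w\neq v^*}}f'(w)+(\gamma^*-1)f'(v^*)+(\gamma^*-\gamma_0)f(\root)
\]
add constructively; I checked the bookkeeping (consistency of the piecewise definition, $f'$ supported on $\{\root\}\cup\Chi{\root}\cup\Chi{v^*}$ with values in $\{-1,0,1\}$, the leaf case $\gamma^*=0$) and it all works, subsuming the paper's $g_2$ and $g_3$ constructions as the two sign choices for $\sigma_0$. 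Your argument is more unified and avoids the case split entirely; it also isolates why level $1$ is special — the only nonlocal term is $f(\parent{v^*})=f(\root)=f'(\root)$, which can be set to $\pm1$ independently of the local sign constraints, whereas for $N\ge 2$ realizing $|f(\parent{v})|=|v|$ would force a long run of $f'=\pm1$ along the path to the root that can conflict with the sign needed for $f'(v)$ and the children, which is consistent with the paper leaving the sharpness question open for $N\ge2$. The paper's route, by contrast, reuses its stock lower-bound propositions and so requires no new construction, at the cost of the three-way case analysis.
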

\begin{proof}
Let $n=\gamma(\root)$ and let $v_1, v_2, \dots, v_n$ be the children of
$\root$. Furthermore, assume that $\gamma(v_1) \leq \gamma(v_2) \leq \dots
\leq \gamma(v_n)$.

Let us deal with the case $\gamma(v_n)=1$ first. It is easy to check that $\Lambda=2n$ if there exists $j$ with $\gamma(v_j)=0$, and $\Lambda=2n-1$ if $\gamma(v_j)=1$ for all $j=1, 2, \dots, n$. Hence, by Proposition \ref{prop:boundB}, $\|B\|\leq 2n=2\gamma(\root)$. By proposition \ref{prop:estimate_B_below}, it follows that  $\|B\|=2\gamma(\root)$ and thus $\|B\|=\max\{2\gamma(\root),\Lambda\}$, as desired.

So assume for the rest of the proof that $\gamma(v_n)\geq 2$. We have two cases:
\begin{itemize}
\item Assume $\gamma(v_n) \geq n$. We have three cases:
\begin{itemize}
\item[$\ast$] If $\gamma(v_j)=0$ for some $j=1, 2 \dots, n-1$, we have that
\begin{align*}
    \gamma(v_j)+\gamma(\parent{v_j}) -1 + |\gamma(v_j)-1|+
  |\gamma(v_j)-\gamma(\parent{v_j})| \, |v_j| &= 2n \\
                                              &\leq 2 \gamma(v_n) \\
                                              &\leq 3\gamma(v_n)-2.
\end{align*}
  
\item[$\ast$] If $1\leq  \gamma(v_j) \leq n$ for some $j=1, 2 \dots, n-1$, we have that
\begin{align*}
  \gamma(v_j)+\gamma(\parent{v_j}) -1 + |\gamma(v_j)-1|+ |\gamma(v_j)-\gamma(\parent{v_j})| \, |v_j|
  &= 2 \gamma(v_j) +n -2 + |\gamma(v_j)-n| \\
   &= \gamma(v_j) +2 n -2 \\
  &\leq 3 \gamma(v_n) -2.
\end{align*}

\item[$\ast$] If $n\leq  \gamma(v_j)$ for some $j=1, 2 \dots, n$, we have that
\begin{align*}
  \gamma(v_j)+\gamma(\parent{v_j}) -1 + |\gamma(v_j)-1|+ |\gamma(v_j)-\gamma(\parent{v_j})| \, |v_j|
  &= 2 \gamma(v_j) +n -2 + |\gamma(v_j)-n| \\
   &= 3 \gamma(v_j) -2 \\
  &\leq 3 \gamma(v_n) -2. 
\end{align*}
\end{itemize}

Thus, $\Lambda= 3 \gamma(v_n) -2$, and by Proposition
\ref{prop:boundB} $\|B\|\leq \max\{2n, 3 \gamma(v_n)-2 \}$. But $2n
\leq 2 \gamma(v_n) \leq 3 \gamma(v_n) -2$, since $\gamma(v_n)\geq
2$. Therefore $\|B \|\leq \Lambda= 3\gamma(v_n) -2$. But by
Proposition \ref{prop:estimate_B_below} we also have $\|B \| \geq 3\gamma(v_n)
-2$, thus  $\|B \| = 3\gamma(v_n)-2$. Therefore $\|B \|= \max\{2\gamma(\root), \Lambda\}$, as desired.

\item Assume $n > \gamma(v_n)$.
\begin{itemize}
\item[$\ast$] If $\gamma(v_j)=0$ for some $j=1, 2, \dots, n-1$, then
\begin{align*}
    \gamma(v_j) +\gamma(\parent{v_j}) -1 + |\gamma(v_j)-1| +
  |\gamma(v_j)-\gamma(\parent{v_j})| \, |v_j| &= 2n \\
  &\leq \gamma(v_n)+2n-2.
\end{align*}
\item[$\ast$] If $1\leq  \gamma(v_j)$ for some $j=1, 2 \dots, n$, then
\begin{align*}
  \gamma(v_j)+\gamma(\parent{v_j}) -1 + |\gamma(v_j)-1|+ |\gamma(v_j)-\gamma(\parent{v_j})| \, |v_j|
  &= 2 \gamma(v_j) +n -2 + |\gamma(v_j)-n| \\
  &= \gamma(v_j) +2 n -2 \\
  &\leq \gamma(v_n) +2 n -2.
\end{align*}
\end{itemize}

Hence, $\Lambda=\gamma(v_n) +2n-2$, and by Proposition
\ref{prop:boundB}, we have $\|B \|\leq \max\{2n, \gamma(v_n)
+2n-2\}=\gamma(v_n) +2n-2$. But now, by Proposition
\ref{prop:estimate_B_below2}, we have that  $\|B \| \geq \gamma(v_n)
+2n-2$, and therefore  $\|B \|= \gamma(v_n) +2n-2$. Thus $\|B \|= \max\{2\gamma(\root), \Lambda\}$, as desired.\qedhere
\end{itemize}
\end{proof}

Are the estimates in Propositions \ref{prop:boundB},
\ref{prop:estimate_B_below} and \ref{prop:estimate_B_below2} sharp for
$N\geq 2$? We have not been able to obtain an answer and we leave open
the question of what the norm of $B$ is for general trees.

{\bf Note: }{\em We would like to thank a referee for suggesting we
  investigate lower estimates for the norm of $B$, which led to
  improvements of the estimates we had in a previous version of this paper.}

\section{Norm of $B$ and of $B^n$ on Homogeneous Trees}\label{sec_norm}

In this section, we find an expression for the norms of $B$ and $B^n$ for the case when $T$ is a homogeneous tree. Recall that a tree is homogeneous of
order $\gamma$ if $\gamma(v)=\gamma$ for all $v \in T$.

\begin{theorem}
Let $T$ be a rooted homogeneous tree of order $\gamma$ and let $B$ be the
backward shift on $\Lip$. Then
\[
  \| B \| = \begin{cases}
    2,& \text{ if } \gamma=1, \\
    3 \gamma -2, & \text{ if } \gamma \geq 2.
  \end{cases}
\]
\end{theorem}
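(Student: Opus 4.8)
The plan is to establish matching upper and lower bounds. For $\gamma=1$ the tree is a single ray, $\Gamma = \gamma(\root) = 1$ and $\Omega = 0$, so Corollary~\ref{cor:boundB_L0} gives $\|B\|\le 2$, while the function $g_1$ constructed before Proposition~\ref{prop:estimate_B_below} (or directly Proposition~\ref{prop:estimate_B_below} with $\gamma(\root)=1$) yields $\|B\|\ge 2\gamma(\root)=2$; hence $\|B\|=2$. For $\gamma\ge 2$, since $T$ is homogeneous of order $\gamma$ we have $\Gamma=\gamma$, $\Omega=0$, and every vertex (including the root) has $\gamma(v)=\gamma$, so Proposition~\ref{prop:estimate_B_below} gives the lower bound $\|B\|\ge\max\{2\gamma,3\gamma-2\}=3\gamma-2$ (the maximum being $3\gamma-2$ because $\gamma\ge 2$). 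So the entire content of the theorem reduces to proving the \emph{sharp upper bound} $\|B\|\le 3\gamma-2$ when $\gamma\ge 2$; note Corollary~\ref{cor:boundB_L0} only gives $\|B\|\le 3\Gamma-2+\Omega = 3\gamma-2$ here, so in fact the corollary already matches — but I should double-check whether the root term $2\gamma(\root)=2\gamma$ causes trouble: since $2\gamma\le 3\gamma-2$ for $\gamma\ge 2$, Corollary~\ref{cor:boundB_L0} indeed yields exactly $\|B\|\le 3\gamma-2$. So for $\gamma\ge 2$ the theorem follows immediately by combining Corollary~\ref{cor:boundB_L0} with Proposition~\ref{prop:estimate_B_below}.

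Thus the only case requiring genuine work is a potential gap: I would want to re-examine whether the bound $\|B\|\le 3\gamma-2$ from Corollary~\ref{cor:boundB_L0} is truly valid in the homogeneous case, because that corollary's proof goes through Proposition~\ref{prop:boundB}, whose estimate $|(Bf)'(v)|\le\Lambda\|f\|$ uses the crude bound $|f(\parent v)|\le(|\parent v|+1)\|f\|$ from Lemma~\ref{le:co_ea}. In the homogeneous case $\gamma(v)-\gamma(\parent v)=0$ for all $v\in T^*$, so that term drops out entirely and one gets the much cleaner estimate
\begin{align*}
|(Bf)'(v)| &= \Bigl|\sum_{w\in\Chi v}f'(w)-\sum_{\substack{w\in\Chi{\parent v}\\ w\ne v}}f'(w)+(\gamma-1)f'(v)\Bigr|\\
&\le \gamma\|f\| + (\gamma-1)\|f\| + (\gamma-1)\|f\| = (3\gamma-2)\|f\|,
\end{align*}
directly from Equation~\eqref{eq:bf'_estimate}, with no appeal to Lemma~\ref{le:co_ea}. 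Combined with $|(Bf)'(\root)|\le 2\gamma\|f\|\le(3\gamma-2)\|f\|$ for $\gamma\ge 2$, this gives $\|B\|\le 3\gamma-2$ cleanly.

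So the key steps, in order, are: (1) compute $\Gamma=\gamma$, $\Omega=0$ in the homogeneous case; (2) for $\gamma=1$, invoke Corollary~\ref{cor:boundB_L0} for $\|B\|\le 2$ and Proposition~\ref{prop:estimate_B_below} (with $\gamma(\root)=1$) for $\|B\|\ge 2$; (3) for $\gamma\ge 2$, derive $\|B\|\le 3\gamma-2$ — either citing Corollary~\ref{cor:boundB_L0} directly or, more transparently, redoing the estimate from Equation~\eqref{eq:bf'_estimate} as above since the difference term vanishes — and combine with $2\gamma\le 3\gamma-2$ for the root; (4) for $\gamma\ge2$, invoke Proposition~\ref{prop:estimate_B_below} (with $\Gamma'=\gamma$) for $\|B\|\ge\max\{2\gamma,3\gamma-2\}=3\gamma-2$. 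The main (and only real) obstacle is making sure the lower-bound construction $g_2$ from Proposition~\ref{prop:estimate_B_below} applies without modification — it does, since a homogeneous tree is homogeneous by sectors at every level, in particular at level $N=1$, so $\Gamma'=\gamma$. Everything else is bookkeeping with the two inequalities.
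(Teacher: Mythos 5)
Your proposal is correct and follows essentially the same route as the paper: the upper bound $\|B\|\le\max\{2\gamma,3\gamma-2\}$ comes from Corollary~\ref{cor:boundB} (with $\Gamma=\gamma$, $\Omega=0$) and the matching lower bound from Proposition~\ref{prop:estimate_B_below}. Your extra observation that in the homogeneous case the term $(\gamma(v)-\gamma(\parent{v}))f(\parent{v})$ vanishes, so the upper bound follows directly from Equation~\eqref{eq:bf'_estimate} without Lemma~\ref{le:co_ea}, is a valid (and slightly more transparent) rederivation of the same estimate, but not a different argument.
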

\begin{proof}
First of all, observe that, by Corollary \ref{cor:boundB}, since
$\Gamma=\gamma$ and $\Omega=0$ we have $\| B \| \leq \max\{ 2\gamma, 3
\gamma-2 \}$.

By Proposition \ref{prop:estimate_B_below}, we have that $\| B \| \geq \max\{ 2\gamma, 3 \gamma-2 \}$, and hence the result follows.
\end{proof}

We have the same result for the little Lipschitz space.
\begin{theorem}
Let $T$ be a rooted homogeneous tree of order $\gamma$ and let $B$ be the
backward shift on $\Lip_0$. Then
\[
  \| B \| = \begin{cases}
    2,& \text{ if } \gamma=1, \\
    3 \gamma -2, & \text{ if } \gamma \geq 2.
  \end{cases}
\]
\end{theorem}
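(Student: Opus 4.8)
The plan is to simply transfer the argument used for $\Lip$ in the previous theorem, checking only that each ingredient survives the passage to the little Lipschitz space. For the upper bound I would invoke Corollary~\ref{cor:boundB} (together with the remark, recorded right after the proof that $B$ is bounded on $\Lip_0$, that the norm estimates of Proposition~\ref{prop:boundB} and Corollary~\ref{cor:boundB_L0} hold verbatim for $B$ acting on $\Lip_0$): since a homogeneous tree of order $\gamma$ has $\Gamma=\gamma$ and $\Omega=0$, this gives
\[
  \|B\| \leq \max\{2\gamma,\, 3\gamma-2\}.
\]

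For the lower bound I would observe that the extremal function $g_2$ constructed in the proof of Proposition~\ref{prop:estimate_B_below} (and, when needed, $g_1$) is finitely supported, hence automatically lies in $\Lip_0$; therefore the computation $(Bg_2)'(u^\ast)=3\gamma(u^\ast)-2=3\Gamma'-2$ and, for $g_1$, $\|Bg_1\|\geq 2\gamma(\root)$, are valid in $\Lip_0$ as well. For a homogeneous tree of order $\gamma$ we have $\gamma(\root)=\gamma$ and $\Gamma'=\gamma$, so these give $\|B\| \geq \max\{2\gamma,\, 3\gamma-2\}$ for $B$ on $\Lip_0$. (This is exactly the content of the remark ``the same estimate holds for $B$ as operator on $\Lip_0$'' following Proposition~\ref{prop:estimate_B_below}.)

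Combining the two bounds yields $\|B\| = \max\{2\gamma,\,3\gamma-2\}$ on $\Lip_0$, and a one-line case check finishes the proof: for $\gamma=1$ this maximum equals $2$, while for $\gamma\geq 2$ one has $3\gamma-2\geq 2\gamma$, so the maximum equals $3\gamma-2$. There is no real obstacle here; the only point requiring a word of justification is that the test functions witnessing the lower estimate belong to $\Lip_0$, which is immediate from their finite support. One could alternatively phrase the whole argument as ``the proof is identical to that of the preceding theorem, noting that $g_1$ and $g_2$ lie in $\Lip_0$,'' but I would prefer to spell out the two displayed inequalities so the reader need not re-derive them.
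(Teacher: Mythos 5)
Your proposal is correct and is exactly the argument the paper intends: the paper states this theorem without proof precisely because the upper bound from Corollary~\ref{cor:boundB} and the lower bound from Proposition~\ref{prop:estimate_B_below} were both already noted to carry over to $\Lip_0$ (the latter because $g_1$ and $g_2$ have finite support). Your spelled-out version, including the final case check $\max\{2\gamma,3\gamma-2\}$, matches the proof of the preceding $\Lip$ theorem verbatim.
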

For the computation of the spectrum of $B$ for a rooted homogeneous tree, we will need to find the
norm of $B^n$. Let us do some preliminary computations. First of all,
it is clear that, for any $f \in \calF$ we have
\[
  (B^n f)(v)=\sum_{w \in \nChi{v}{n}} f(w).
\]
From this, and since each vertex in $ \nChi{v}{n-1}$ is the parent of $\gamma$
vertices in $\nChi{v}{n}$, it follows that 
\begin{equation}\label{eq:Bnf}
  \begin{split}
    (B^n f)(v)
    &= \sum_{w \in \nChi{v}{n}} (f(w) - f(\parent{w})) + \sum_{w \in
      \nChi{v}{n}} f(\parent{w}) \\
    &= \sum_{w \in \nChi{v}{n}} f'(w) + \gamma \sum_{w
      \in \nChi{v}{n-1}} f(w).
  \end{split}
\end{equation}
In the same manner, we have
\[
  (B^n f)(v)= \sum_{w \in \nChi{v}{n}} f'(w) + \gamma \left( \sum_{w
      \in \nChi{v}{n-1}} (f(w)-f(\parent{w})) +  \gamma \sum_{w \in
      \nChi{v}{n-2}} f(w)\right),
\]
since each vertex in $ \nChi{v}{n-2}$ is the parent of $\gamma$
vertices in $\nChi{v}{n-1}$. Proceeding inductively, we get
\begin{equation*}
  \begin{split}
    (B^n f)(v)
    &= \sum_{w \in \nChi{v}{n}} f'(w) + \gamma \sum_{w \in
      \nChi{v}{n-1}} f'(w) +  \gamma^2 \sum_{w \in \nChi{v}{n-2}} f'(w) \\
    & \hspace{0.4cm} + \gamma^3 \sum_{w \in \nChi{v}{n-3}} f'(w) + \dots +
    \gamma^{n-2} \sum_{w \in \nChi{v}{2}} f'(w) + \gamma^{n-1} \sum_{w
      \in \Chi{v}} f'(w) + \gamma^n f(v).
  \end{split}
\end{equation*}
In short, we have obtained
\begin{equation}\label{eq:Bn_long}
\begin{split}
  (B^n f)(v)
    &=\sum_{\nChi{u}{n}} f(w) \\
    &= \sum_{k=0}^{n-1} \gamma^{k} \left( \sum_{w \in \nChi{v}{n-k}}
      f'(w) \right) + \gamma^n f(v).
  \end{split}
\end{equation}

We will use this expression in the proof of the following proposition.
\begin{proposition}\label{prop:Bn}
Let $T$ be a rooted homogeneous tree of order $\gamma$ and let $B$ be the
backward shift on $\Lip$. Then,
\[
  \| B^n \| \leq \max\{ (2n+1)\gamma^n-2n\gamma^{n-1},(n+1)\gamma^n \}
  \]
\end{proposition}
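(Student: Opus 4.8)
The plan is to estimate $|(B^n f)'(v)|$ using the explicit formula \eqref{eq:Bn_long} for $(B^n f)(v)$, splitting as usual into the case $v = \root$ and the case $v \in T^*$. For $v = \root$, we have $(B^n f)'(\root) = (B^n f)(\root) = \sum_{k=0}^{n-1}\gamma^k\bigl(\sum_{w\in\nChi{\root}{n-k}}f'(w)\bigr) + \gamma^n f(\root)$; since $|\nChi{\root}{n-k}| = \gamma^{n-k}$ and $|f'(w)|\le\|f\|$, and $|f(\root)| = |f'(\root)|\le\|f\|$ by Lemma~\ref{le:co_ea} (with $|\root|=0$), this gives $|(B^n f)'(\root)| \le \bigl(\sum_{k=0}^{n-1}\gamma^k\cdot\gamma^{n-k} + \gamma^n\bigr)\|f\| = (n+1)\gamma^n\|f\|$. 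That accounts for the second term in the maximum.

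For $v \in T^*$, the key step is to write $(B^n f)'(v) = (B^n f)(v) - (B^n f)(\parent{v})$ and apply \eqref{eq:Bn_long} to both terms. The subtlety is that $\nChi{v}{n-k}$ and $\nChi{\parent{v}}{n-k}$ are nested: every $(n-k)$-child of $v$ is also an $(n-k+1)$-child of $\parent{v}$. So I would reindex the expansion of $(B^n f)(\parent{v})$ to shift the inner sums so that they run over $(n-k)$-children of $v$ plus the ``extra'' $(n-k+1)$-children of $\parent{v}$ that do not descend from $v$. After telescoping, most of the $f'$-sums over $\nChi{v}{\cdot}$ cancel between the two expansions up to the difference in the $\gamma^k$ coefficients, and one is left with: a sum of $f'$-terms over children-sets that one bounds crudely by counting vertices, plus a leftover term of the form $\gamma^n f(v) - \gamma^n f(\parent{v}) = \gamma^n f'(v)$, plus boundary terms over the sets $\nChi{\parent{v}}{j}\setminus(\text{descendants of }v)$. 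Each of the latter sets has cardinality $(\gamma-1)\gamma^{j-1}$ for $j\ge 1$. Collecting the coefficients of $\|f\|$ should yield the bound $(2n+1)\gamma^n - 2n\gamma^{n-1}$; this is where I expect the bookkeeping to be heaviest, and getting the exact constant (rather than something slightly larger) will require carefully tracking which $\gamma^k$ coefficient multiplies which counting factor.

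Concretely, I anticipate the estimate for $v\in T^*$ takes the shape
\[
|(B^n f)'(v)| \le \left(\sum_{j=1}^{n}\gamma^{n-j}(\gamma-1)\gamma^{j-1} + \sum_{k=0}^{n-1}\gamma^k\gamma^{n-k} + (\text{correction}) \right)\|f\|,
\]
where the first sum counts the ``extra'' boundary vertices at each level (contributing $n(\gamma-1)\gamma^{n-1}$), the second again handles the overlapping $f'$-sums, and a further $\gamma^n$ comes from the $\gamma^n f'(v)$ term. Combining these — $n(\gamma-1)\gamma^{n-1} + n\gamma^n + \gamma^n$, or a regrouping thereof — should simplify to $(2n+1)\gamma^n - 2n\gamma^{n-1}$. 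One then concludes $\|B^n\| = \sup_{v}|(B^nf)'(v)|/\|f\| \le \max\{(2n+1)\gamma^n - 2n\gamma^{n-1}, (n+1)\gamma^n\}$, as claimed.

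The main obstacle is purely combinatorial: correctly handling the nesting $\nChi{v}{m}\subset\nChi{\parent{v}}{m+1}$ so that the cancellation in $(B^nf)(v) - (B^nf)(\parent{v})$ is carried out with the right coefficients, and then verifying that the accumulated constant is exactly $(2n+1)\gamma^n - 2n\gamma^{n-1}$ rather than something weaker. A useful sanity check before committing is the case $n=1$: the formula gives $\max\{3\gamma-2, 2\gamma\}$, matching the norm computed in the preceding theorem, which suggests the telescoping and counting are set up correctly.
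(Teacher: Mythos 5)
Your plan follows the paper's proof essentially step for step: expand $(B^nf)(v)$ and $(B^nf)(\parent{v})$ in terms of $f'$ via \eqref{eq:Bn_long}, exploit the nesting $\nChi{v}{m}\subseteq\nChi{\parent{v}}{m+1}$, and count vertices level by level; the root case is complete and correct and gives $(n+1)\gamma^n$. The gap is in the tally for $v\in T^*$, and it is not closable by regrouping: the three quantities you display sum to $n(\gamma-1)\gamma^{n-1}+n\gamma^n+\gamma^n=(2n+1)\gamma^n-n\gamma^{n-1}$, which exceeds the claimed constant by $n\gamma^{n-1}$. The overcount comes from bounding the ``overlapping'' sums with their full coefficients, $\sum_{k=0}^{n-1}\gamma^k\gamma^{n-k}=n\gamma^n$. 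After the cancellation you describe in prose, the sum $\sum_{w\in\nChi{v}{n-j}}f'(w)$ for $1\le j\le n-1$ survives only with the \emph{differenced} coefficient $\gamma^{j}-\gamma^{j-1}$ (it carries weight $\gamma^{j}$ in the expansion of $(B^nf)(v)$ and weight $\gamma^{j-1}$ in the expansion of $(B^nf)(\parent{v})$, where $\nChi{v}{n-j}\subseteq\nChi{\parent{v}}{n-j+1}$); only the top level $\nChi{v}{n}$ keeps coefficient $1$. Likewise the surviving multiple of $f'(v)$ is $(\gamma^n-\gamma^{n-1})f'(v)$, not $\gamma^n f'(v)$, because $v\in\Chi{\parent{v}}$ contributes $-\gamma^{n-1}f'(v)$ from the parent's expansion. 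The correct count is therefore
\[
\gamma^n+\sum_{j=1}^{n-1}\bigl(\gamma^{j}-\gamma^{j-1}\bigr)\gamma^{n-j}+\sum_{k=0}^{n-1}\gamma^{k}\bigl(\gamma^{n-k}-\gamma^{n-k-1}\bigr)+\bigl(\gamma^n-\gamma^{n-1}\bigr)=(2n+1)\gamma^n-2n\gamma^{n-1},
\]
where the middle sum is your boundary count $n(\gamma-1)\gamma^{n-1}$, which you do have right. As written, your argument only yields $\|B^n\|\le\max\{(2n+1)\gamma^n-n\gamma^{n-1},(n+1)\gamma^n\}$; that weaker bound would still give $r(B)=\gamma$, but it would not match the lower bound produced by the function $h_n$ and so would not suffice for the exact norm computation in Theorem~\ref{th:Bn}.
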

\begin{proof}
  Using Equation \eqref{eq:Bn_long}, we have
  \[
    \mod{ (B^n f)(\root) } \leq \sum_{k=0}^{n-1} \left( \gamma^k \sum_{w \in \nChi{\root}{n-k}}
    |f'(w)| \right) + \gamma^n |f(\root)|.
  \]
  Since for every $s \in \N$ there are $\gamma^s$ vertices in $\nChi{\root}{s}$, and
  $|f'(w)|\leq \| f \|$ for all $w \in T$, we get
  \[
    \mod{ (B^n f)(\root)}  \leq \sum_{k=0}^{n-1} \gamma^k
    \gamma^{n-k} \| f \| + \gamma^n |f(\root)|,
  \]
  and therefore, since $|f(\root)| \leq \| f \|$, we have
  \[
    \mod{ (B^n f)(\root) } \leq (n+1) \gamma^n \| f \|.
  \]

  Now, let $v \in T^*$.
  Equation \eqref{eq:Bnf} is
\[
(B^n f)(v)= \sum_{w \in \nChi{v}{n}} f'(w) + \gamma \sum_{w \in
  \nChi{v}{n-1}} f(w).
\]

Also, we have 
\[
(B^n f)(\parent{v})= \sum_{w \in \nChi{\parent{v}}{n}} f(w) =
\sum_{\substack{w \in \nChi{\parent{v}}{n} \\ w \notin \nChi{v}{n-1}}}  f(w) 
 + \sum_{w \in
  \nChi{v}{n-1}} f(w).
\]

The two equations above give
\begin{equation}\label{eq:Bnprime}
  \begin{split}
    (B^n f)'(v)
    &= (B^n f)(v)-(B^n f)(\parent{v}) \\
    &=  \sum_{w \in \nChi{v}{n}} f'(w) + (\gamma-1) \sum_{w \in
      \nChi{v}{n-1}} f(w) - \sum_{\substack{w \in \nChi{\parent{v}}{n}
        \\ w \notin \nChi{v}{n-1}}}  f(w).
\end{split}
\end{equation}

  Since each vertex in $\nChi{\parent{v}}{n-1}$ is the parent
  of $\gamma$ vertices in $\nChi{\parent{v}}{n}$, we have
  \begin{equation*}
    \begin{split}
    \sum_{\substack{w \in \nChi{\parent{v}}{n} \\ w \notin
        \nChi{v}{n-1}}} f(w)
    & = \sum_{\substack{w \in \nChi{\parent{v}}{n} \\ w \notin
            \nChi{v}{n-1}}} \left( f(w) -f(\parent{w}) \right) +
        \sum_{\substack{w \in \nChi{\parent{v}}{n} \\ w \notin
            \nChi{v}{n-1}}} f(\parent{w}) \\
    &= \sum_{\substack{w \in \nChi{\parent{v}}{n} \\ w \notin
        \nChi{v}{n-1}}} f'(w) + \gamma \sum_{\substack{w \in
        \nChi{\parent{v}}{n-1} \\ w \notin \nChi{v}{n-2}}} f(w).
     \end{split}
  \end{equation*}
            
Inductively, we obtain
\begin{equation*}
  \begin{split}
    \sum_{\substack{w \in \nChi{\parent{v}}{n} \\ w \notin
        \nChi{v}{n-1}}} f(w)
    =& \sum_{\substack{w \in \nChi{\parent{v}}{n} \\ w \notin
            \nChi{v}{n-1}}} f'(w)
    + \gamma \sum_{\substack{w \in \nChi{\parent{v}}{n-1} \\ w \notin
        \nChi{v}{n-2}}} f'(w) 
    + \gamma^2 \sum_{\substack{w \in \nChi{\parent{v}}{n-2} \\ w \notin
        \nChi{v}{n-3}}} f'(w) \\
    & \qquad \qquad \quad + \dots
    + \gamma^{n-2} \sum_{\substack{w \in \nChi{\parent{v}}{2} \\ w \notin
        \Chi{\parent{v}}}} f'(w)
    + \gamma^{n-1} \sum_{\substack{w \in \Chi{\parent{v}} \\ w \neq v}} f(w).
    \end{split}
  \end{equation*}
Since there are  $\gamma-1$ vertices in $\Chi{\parent{v}}$ different
than $v$, we have
\begin{equation*}
   \begin{split}
    \sum_{\substack{w \in \nChi{\parent{v}}{n} \\ w \notin
        \nChi{v}{n-1}}} f(w)
    =& \sum_{\substack{w \in \nChi{\parent{v}}{n} \\ w \notin
            \nChi{v}{n-1}}} f'(w)
    + \gamma \sum_{\substack{w \in \nChi{\parent{v}}{n-1} \\ w \notin
        \nChi{v}{n-2}}} f'(w) 
    + \gamma^2 \sum_{\substack{w \in \nChi{\parent{v}}{n-2} \\ w \notin
        \nChi{v}{n-3}}} f'(w) \\
    & \quad  + \dots
    + \gamma^{n-2} \sum_{\substack{w \in \nChi{\parent{v}}{2} \\ w \notin
        \Chi{\parent{v}}}} f'(w)
    + \gamma^{n-1} \sum_{\substack{w \in \Chi{\parent{v}} \\ w \neq
        v}} f'(w) + \gamma^{n-1} (\gamma-1) f(\parent{v}).
    \end{split}
  \end{equation*}
In short, we have obtained
  \begin{equation}\label{eq:chi_n_n-1}
    \sum_{\substack{w \in \nChi{\parent{v}}{n} \\ w \notin
        \nChi{v}{n-1}}} f(w)
    = \sum_{k=0}^{n-1} \gamma^{k}\left( \sum_{\substack{w\in
          \nChi{\parent{v}}{n-k} \\ w \notin \nChi{v}{n-k-1}}} f'(w)
    \right) +  \gamma^{n-1} (\gamma-1) f(\parent{v}).
  \end{equation}
  
Substituting Equation \eqref{eq:Bn_long} (for $B^{n-1}$) and \eqref{eq:chi_n_n-1} into Equation \eqref{eq:Bnprime} we obtain
\begin{equation*}
  \begin{split}
    (B^n f)'(v)
    & = (B^n f)(v)-(B^n f)(\parent{v}) \\
    &= \sum_{w \in \nChi{v}{n}} f'(w) + (\gamma-1) \left(
      \sum_{k=0}^{n-2} \gamma^{k} \left( \sum_{w \in \nChi{v}{n-k-1}}
        f'(w) \right)+ \gamma^{n-1} f(v) \right) \\
    & \qquad -  \sum_{k=0}^{n-1} \gamma^{k}\left( \sum_{\substack{w\in
          \nChi{\parent{v}}{n-k} \\ w \notin \nChi{v}{n-k-1}}} f'(w)
    \right) -  \gamma^{n-1} (\gamma-1) f(\parent{v}) \\
    &= \sum_{w \in \nChi{v}{n}} f'(w) + (\gamma-1) \left(
      \sum_{k=0}^{n-2} \gamma^{k} \left( \sum_{w \in \nChi{v}{n-k-1}}
        f'(w) \right) \right) \\
    & \qquad -  \sum_{k=0}^{n-1} \gamma^{k}\left( \sum_{\substack{w\in
          \nChi{\parent{v}}{n-k} \\ w \notin \nChi{v}{n-k-1}}} f'(w)
    \right) +  \gamma^{n-1} (\gamma-1) f'(v)
  \end{split}
\end{equation*}
Since, for every $s \in \N$, there are $\gamma^s$ vertices in
$\nChi{v}{s}$, and for every $k \in \{1, 2, \dots, n-1\}$ there are
$\gamma^{n-k} - \gamma^{n-k-1}$ vertices in $\nChi{\parent{v}}{n-k}
\setminus \nChi{v}{n-k-1}$, we obtain
\begin{equation*}
  \begin{split}
    \mod{ ( B^n f)'(v)) }
    & \leq \gamma^n \| f \| + (\gamma-1) \sum_{k=0}^{n-2}
     \gamma^k  \gamma^{n-k-1}\| f \| + \sum_{k=0}^{n-1} \gamma^{k}
    (\gamma^{n-k} - \gamma^{n-k-1}) \| f \| +  \gamma^{n-1} (\gamma-1)
    \| f \| \\
    & = \left( \gamma^n + (\gamma-1) \sum_{k=0}^{n-2}
    \gamma^{n-1} + \sum_{k=0}^{n-1}
    (\gamma^{n} - \gamma^{n-1})  +  \gamma^{n-1} (\gamma-1) \right)
  \| f \| \\
    & = \left( \gamma^n + (n-1) (\gamma-1) \gamma^{n-1} + n
      (\gamma^{n} - \gamma^{n-1})  +  \gamma^{n-1} (\gamma-1) \right)
    \| f \| \\
    & = \left( (2n+1) \gamma^n - 2n \gamma^{n-1} \right) \| f \|.
    \end{split}
 \end{equation*}
 Therefore, $ \| B^n \| \leq \max\{(2n+1) \gamma^n - 2n
 \gamma^{n-1},(n+1) \gamma^n\}$,  as desired.
\end{proof}

Using this proposition we can compute the exact value of the norm of $B^n$.

\begin{theorem}\label{th:Bn}
Let $T$ be a rooted homogeneous tree of order $\gamma$ and let $B$ be the
backward shift on $\Lip$. Then, for every $n \in \N$,
\[
  \| B^n \| = \begin{cases}
    n+1,& \text{ if } \gamma=1, \\
    (2n+1) \gamma^n - 2n \gamma^{n-1}, & \text{ if } \gamma \geq 2.
  \end{cases}
\]
\end{theorem}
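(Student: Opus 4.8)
The plan is to match the upper bound of Proposition~\ref{prop:Bn} by exhibiting, in each case, a norm-one function on which $\|B^n f\|$ attains the claimed value. First I would sort out which of the two quantities in that bound dominates: since
\[
  \bigl((2n+1)\gamma^n-2n\gamma^{n-1}\bigr)-(n+1)\gamma^n = n\,\gamma^{n-1}(\gamma-2),
\]
the maximum equals $(n+1)\gamma^n=n+1$ when $\gamma=1$ and equals $(2n+1)\gamma^n-2n\gamma^{n-1}$ when $\gamma\geq 2$. So Proposition~\ref{prop:Bn} already furnishes the correct upper bound in both cases, and only the matching lower bound remains.

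For $\gamma=1$ the tree is a single ray $\root=v_0\sim v_1\sim v_2\sim\cdots$; the function $h(v)=|v|+1$ has $h'(\root)=1$ and $h'(v_k)=1$ for all $k\geq1$, so $h\in\Lip$ and $\|h\|=1$. Since $v_0$ has the single $n$-child $v_n$, we get $(B^nh)'(\root)=(B^nh)(\root)=h(v_n)=n+1$, hence $\|B^n\|\geq\|B^nh\|\geq n+1$, which settles this case.

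For $\gamma\geq 2$, fix a child $v$ of the root (so $\parent{v}=\root$) and define $g\in\calF$ by $g(\root)=0$ together with the following prescription of its ``derivative'': $g'(v)=1$; $g'(w)=1$ for every descendant $w$ of $v$ with $1\leq\dist(v,w)\leq n$; $g'(w)=-1$ for every $w\in\nChi{\root}{m}$, $1\leq m\leq n$, that does not lie in the sector $S_v$; and $g'(w)=0$ otherwise. These vertex sets are pairwise disjoint, so $g$ is well defined; all values of $g'$ lie in $\{-1,0,1\}$ with $g'(v)=1$, so $g\in\Lip$ and $\|g\|=1$. The point is that for $0\le k\le n-1$ the set $\nChi{\root}{n-k}\setminus\nChi{v}{n-k-1}$ occurring in the proof of Proposition~\ref{prop:Bn} is precisely the set of vertices at distance $n-k$ from $\root$ that lie outside $S_v$, on which $g'\equiv -1$. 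Feeding $g$ into the exact identity for $(B^nf)'(v)$ derived there (the displayed expansion just before the absolute-value estimate), each of its four blocks of terms contributes with the same sign: $\sum_{w\in\nChi{v}{n}}g'(w)=\gamma^n$; the block $(\gamma-1)\sum_{k=0}^{n-2}\gamma^k\sum_{w\in\nChi{v}{n-k-1}}g'(w)$ equals $(\gamma-1)(n-1)\gamma^{n-1}$; the subtracted block equals $+\,n(\gamma^n-\gamma^{n-1})$ because the relevant $g'$-values are all $-1$; and $\gamma^{n-1}(\gamma-1)g'(v)=\gamma^n-\gamma^{n-1}$. Summing gives $(B^ng)'(v)=(2n+1)\gamma^n-2n\gamma^{n-1}$, so $\|B^n\|\geq\|B^ng\|\geq(2n+1)\gamma^n-2n\gamma^{n-1}$, completing the proof.

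The main obstacle is designing $g$ in the case $\gamma\geq 2$: the triangle-inequality bound in Proposition~\ref{prop:Bn} is attained only if the signs of $g'$ are arranged so that every surviving term reinforces the others, which is exactly what dictates the split ``$+1$ on the part of $S_v$ within distance $n$, $-1$ on the remaining vertices of the first $n$ levels''. Once this pattern is found, verifying that the prescription is self-consistent (disjointness of the index sets) and that the ensuing cancellation yields exactly $(2n+1)\gamma^n-2n\gamma^{n-1}$ is routine arithmetic using the identity already in hand.
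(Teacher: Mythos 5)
Your proposal is correct and follows essentially the same route as the paper: the upper bound comes from Proposition~\ref{prop:Bn}, and the lower bound comes from an explicit norm-one function whose derivative is $+1$ on the sector of a fixed child of the root up to level $n+1$ and $-1$ on the remaining vertices of the first $n$ levels, evaluated via the exact identity for $(B^nf)'$ at that child. Your test function is just a leaner version of the paper's $h_n$ (you drop the descending ``tail'' and evaluate at a single vertex instead of computing $B^nh_n$ everywhere, and you use the simpler ray function for $\gamma=1$), and all the arithmetic checks out.
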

\begin{proof}
First of all, observe that, by Proposition \ref{prop:Bn} we have $\| B
\| \leq \max\{ (2n+1) \gamma^n - 2n \gamma^{n-1},n+1 \}$, which equals
$n+1$ if $\gamma=1$, and $(2n+1) \gamma^n - 2n \gamma^{n-1}$ if $\gamma
\geq 2$.

Choose a fixed $u^* \in \Chi{\root}$. Define the function $h_{n}: T \to \C$ (see Figure~\ref{fig:h} for an example) as
\[
    h_{n}(v)=\begin{cases}
      1,& \text{ if } v=\root,\\
      |v|+1,& \text{ if } v \in S_{u^*} \text{ and } 1 \leq |v| \leq n+1,\\
      (2n+3)-|v|,& \text{ if } v \in S_{u^*} \text{ and } n+1 \leq |v| \leq 2n+3,\\
      -|v|+1,& \text{ if } v \notin S_{u^*} \text{ and } 1 \leq |v|
      \leq n,\\
      -(2n-1)+|v|,& \text{ if }  v \notin S_{u^*} \text{ and } n \leq |v|
      \leq 2n-1,\\
      0,& \text{ in any other case.}
    \end{cases}
\]

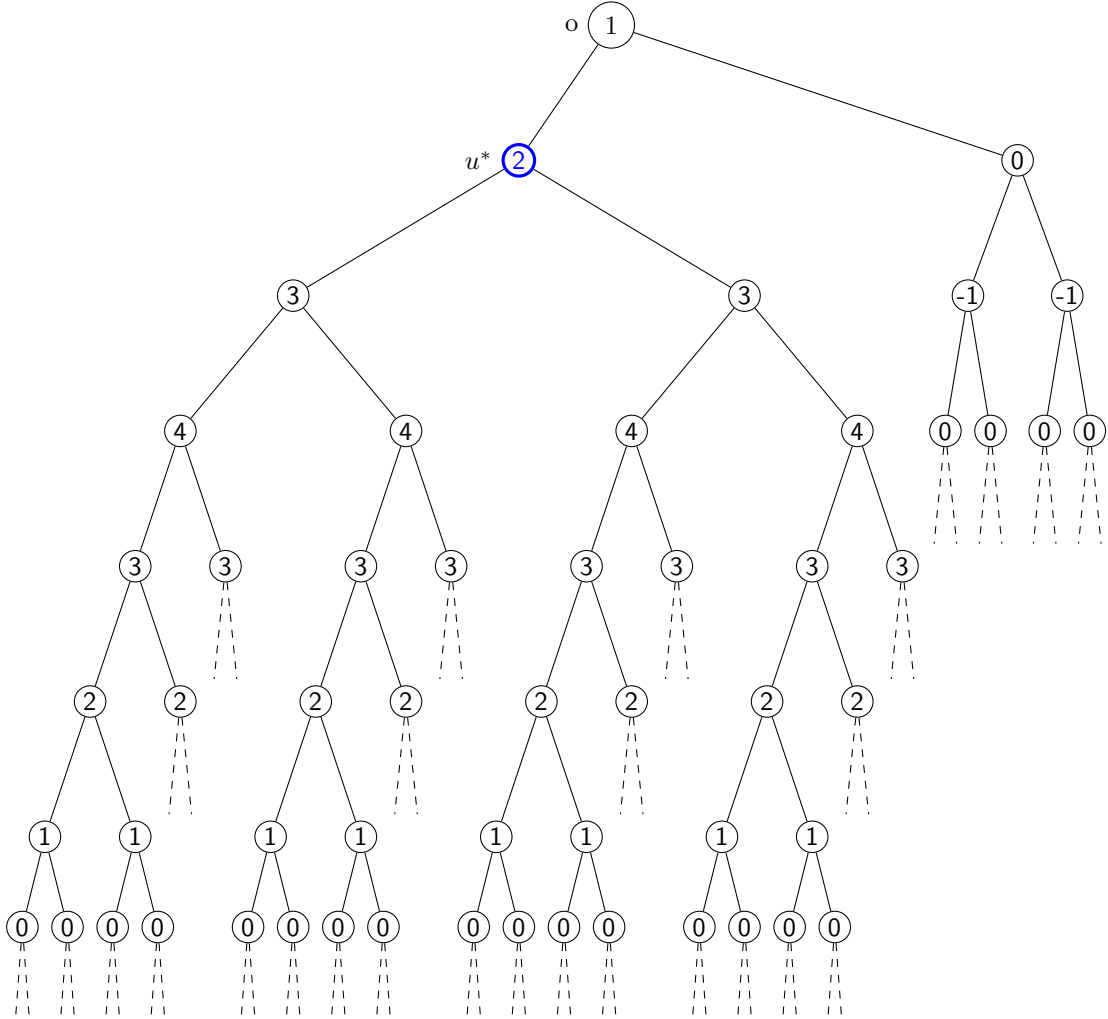
\begin{figure}[!htbp]
\begin{tikzpicture}[%
  level 1/.style={sibling distance=32cm, level distance=3cm},%
  level 2/.style={sibling distance=17cm, level distance=3cm},%
  level 3/.style={sibling distance=8.5cm, level distance=3cm},%
  level 4/.style={sibling distance=5cm, level distance=3cm},%
  level 5/.style={sibling distance=4.2cm, level distance=3cm},%
  level 6/.style={sibling distance=2cm, level distance=3cm},%
  level 7/.style={sibling distance=1cm,level distance=2cm},%
  level 8/.style={sibling distance=0.3cm, level distance=2cm},%
  scale=0.6,emph/.style={edge from parent/.style={black, dashed,draw}}]
  \node[draw,circle,label=left:{$\root$}] {1} [grow=down]
child[sibling distance=0cm]{node[left=1cm,arn_r,draw,circle,label=left:{$u^*$}] {2}
  child[sibling distance=10cm]{node[arn_n,draw,circle] {3}
    child[sibling distance=5cm]{node[arn_n,draw,circle] {4}
      child[sibling distance=2cm]{node[arn_n,draw,circle] {3}
        child[sibling distance=2cm]{node[arn_n,draw,circle] {2}
          child{node[arn_n,draw,circle] {1}
            child{node[arn_n,draw,circle] {0}
              child[emph]{}
              child[emph]{}
            }
            child{node[arn_n,draw,circle] {0}
              child[emph]{}
              child[emph]{}
            }
          }
          child{node[arn_n,draw,circle] {1}
            child{node[arn_n,draw,circle] {0}
              child[emph]{}
              child[emph]{}
            }
            child{node[arn_n,draw,circle] {0}
              child[emph]{}
              child[emph]{}
            }
          }
        }
        child[sibling distance=2cm]{node[arn_n,draw,circle] {2}
          child[sibling distance=0.5cm,level distance=2.5cm,emph]{}
          child[sibling distance=0.5cm,level distance=2.5cm,emph]{}
        }
      }
      child[sibling distance=2cm]{node[arn_n,draw,circle] {3}
        child[sibling distance=0.5cm,level distance=2.5cm,emph]{}
        child[sibling distance=0.5cm,level distance=2.5cm,emph]{}
      }
    }
    child[sibling distance=5cm]{node[arn_n,draw,circle] {4}
      child[sibling distance=2cm]{node[arn_n,draw,circle] {3}
        child[sibling distance=2cm]{node[arn_n,draw,circle] {2}
          child{node[arn_n,draw,circle] {1}
            child{node[arn_n,draw,circle] {0}
              child[emph]{}
              child[emph]{}
            }
            child{node[arn_n,draw,circle] {0}
              child[emph]{}
              child[emph]{}
            }
          }
          child{node[arn_n,draw,circle] {1}
            child{node[arn_n,draw,circle] {0}
              child[emph]{}
              child[emph]{}
            }
            child{node[arn_n,draw,circle] {0}
              child[emph]{}
              child[emph]{}
            }
          }
        }
        child[sibling distance=2cm]{node[arn_n,draw,circle] {2}
          child[sibling distance=0.5cm,level distance=2.5cm,emph]{}
          child[sibling distance=0.5cm,level distance=2.5cm,emph]{}
        }
      }
      child[sibling distance=2cm]{node[arn_n,draw,circle] {3}
        child[sibling distance=0.5cm,level distance=2.5cm,emph]{}
        child[sibling distance=0.5cm,level distance=2.5cm,emph]{}
      }
    }
  }
  child[sibling distance=10cm]{node[arn_n,draw,circle] {3}
    child[sibling distance=5cm]{node[arn_n,draw,circle] {4}
      child[sibling distance=2cm]{node[arn_n,draw,circle] {3}
        child[sibling distance=2cm]{node[arn_n,draw,circle] {2}
          child{node[arn_n,draw,circle] {1}
            child{node[arn_n,draw,circle] {0}
              child[emph]{}
              child[emph]{}
            }
            child{node[arn_n,draw,circle] {0}
              child[emph]{}
              child[emph]{}
            }
          }
          child{node[arn_n,draw,circle] {1}
            child{node[arn_n,draw,circle] {0}
              child[emph]{}
              child[emph]{}
            }
            child{node[arn_n,draw,circle] {0}
              child[emph]{}
              child[emph]{}
            }
          }
        }
        child[sibling distance=2cm]{node[arn_n,draw,circle] {2}
          child[sibling distance=0.5cm,level distance=2.5cm,emph]{}
          child[sibling distance=0.5cm,level distance=2.5cm,emph]{}
        }
      }
      child[sibling distance=2cm]{node[arn_n,draw,circle] {3}
        child[sibling distance=0.5cm,level distance=2.5cm,emph]{}
        child[sibling distance=0.5cm,level distance=2.5cm,emph]{}
      }
    }
    child[sibling distance=5cm]{node[arn_n,draw,circle] {4}
      child[sibling distance=2cm]{node[arn_n,draw,circle] {3}
        child[sibling distance=2cm]{node[arn_n,draw,circle] {2}
          child{node[arn_n,draw,circle] {1}
            child{node[arn_n,draw,circle] {0}
              child[emph]{}
              child[emph]{}
            }
            child{node[arn_n,draw,circle] {0}
              child[emph]{}
              child[emph]{}
            }
          }
          child{node[arn_n,draw,circle] {1}
            child{node[arn_n,draw,circle] {0}
              child[emph]{}
              child[emph]{}
            }
            child{node[arn_n,draw,circle] {0}
              child[emph]{}
              child[emph]{}
            }
          }
        }
        child[sibling distance=2cm]{node[arn_n,draw,circle] {2}
          child[sibling distance=0.5cm,level distance=2.5cm,emph]{}
          child[sibling distance=0.5cm,level distance=2.5cm,emph]{}
        }
      }
      child[sibling distance=2cm]{node[arn_n,draw,circle] {3}
        child[sibling distance=0.5cm,level distance=2.5cm,emph]{}
        child[sibling distance=0.5cm,level distance=2.5cm,emph]{}
      }
    }
  }
}
child[sibling distance=18cm]{node[arn_n,draw,circle] {0}
  child[sibling distance=2.2cm]{node[arn_n,draw,circle] {-1}
    child[sibling distance=1cm]{node[arn_n,draw,circle] {0}
      child[sibling distance=0.5cm,level distance=2.5cm,emph]{}
      child[sibling distance=0.5cm,level distance=2.5cm,emph]{}
    }
    child[sibling distance=1cm]{node[arn_n,draw,circle] {0}
      child[sibling distance=0.5cm,level distance=2.5cm,emph]{}
      child[sibling distance=0.5cm,level distance=2.5cm,emph]{}
    }
  }
  child[sibling distance=2.2cm]{node[arn_n,draw,circle] {-1}
    child[sibling distance=1cm]{node[arn_n,draw,circle] {0}
      child[sibling distance=0.5cm,level distance=2.5cm,emph]{}
      child[sibling distance=0.5cm,level distance=2.5cm,emph]{}
    }
    child[sibling distance=1cm]{node[arn_n,draw,circle] {0}
      child[sibling distance=0.5cm,level distance=2.5cm,emph]{}
      child[sibling distance=0.5cm,level distance=2.5cm,emph]{}
    }
  }
};
\end{tikzpicture}
\caption{Some of the values of $h_n$ for the case $\gamma=2$ and $n=2$. The values  are {\em inside} each vertex, the blue vertex is $u^*$ and the root is labeled $\root$.}\label{fig:h}
\end{figure}

It is clear that
\[
  h_{n}'(v)=\begin{cases}
    1, & \text{ if } v=\root,\\
    1, & \text{ if } v\in S_{u^*} \text{ and } 1\leq |v| \leq n+1\\
    -1, & \text{ if } v\in S_{u^*} \text{ and } n+2\leq |v| \leq 2n+3\\
    -1, & \text{ if } v\notin S_{u^*} \text{ and } 1\leq |v| \leq n\\
    1, & \text{ if } v\notin S_{u^*} \text{ and } n+1\leq |v| \leq 2n-1\\
    0,& \text{ in any other case.}
  \end{cases}
\]
and therefore $h_{n} \in \Lip$ and $\| h_{n} \|=1$. Also, a straightforward computation shows that
\[
    (B^n h_{n})(v)=\begin{cases}
      \gamma^{n-1}(n+1)+ (\gamma^n - \gamma^{n-1})(-n+1),& \text{ if } v=\root,\\
      \gamma^n (n+3-|v|),& \text{ if } v \in S_{u^*} \text{ and } 1 \leq |v| \leq n+3,\\
      \gamma^n (-(n-1)+|v|),& \text{ if } v \notin S_{u^*} \text{ and } 1 \leq |v|
      \leq n-1,\\
      0,& \text{ in any other case.}
    \end{cases}
\] 
Hence, 
\begin{equation}\label{eq:Bnhprime}
    (B^n h_{n})'(v)=\begin{cases}
      \gamma^{n-1}(n+1)+ (\gamma^n - \gamma^{n-1})(-n+1),& \text{ if } v=\root,\\
      \gamma^n (n+2) - \left(\gamma^{n-1}(n+1)+ (\gamma^n -
        \gamma^{n-1})(-n+1)\right),& \text{ if } v =u^*, \\
      \gamma^n (-n+2)- \left(\gamma^{n-1}(n+1)+ (\gamma^n -
        \gamma^{n-1})(-n+1)\right),& \text{ if } v \neq u^* \text{ and
      } |v|=1,\\
      -\gamma^n, &\text{ if } v\in S_{u^*} \text{ and } 2 \leq |v|
      \leq n+3,\\ 
      \gamma^n, &\text{ if } v\notin S_{u^*} \text{ and } 2 \leq |v|
      \leq n-1,\\ 
      0,& \text{ in any other case.}
    \end{cases}
  \end{equation}

If $\gamma=1$, Equation \eqref{eq:Bnhprime} simplifies to
\[
    (B^n h_{n})'(v)=\begin{cases}
      n+1,& \text{ if } v=\root,\\
      1,& \text{ if } v =u^* \\
      -1, &\text{ if } 2 \leq |v| \leq n+3\\ 
      0,& \text{ in any other case.}
    \end{cases}
\]  
Hence, if $\gamma=1$, we have then that $\| B^n h_{n} \|=n+1$, which together with
Proposition \ref{prop:Bn} gives that $\| B^n \|=n+1$, as desired.

If $\gamma \geq 2$, Equation \eqref{eq:Bnhprime} simplifies to
\[
      (B^n h_{n})'(v)=\begin{cases}
      \gamma^{n-1}(2n)- \gamma^n(n-1),& \text{ if } v=\root,\\
      \gamma^n (2n+1) - \gamma^{n-1}(2n),& \text{ if } v =u^*, \\
      \gamma^n - \gamma^{n-1}(2n),& \text{ if } v \neq u^* \text{ and
      } |v|=1,\\
      -\gamma^n, &\text{ if } v\in S_{u^*} \text{ and } 2 \leq |v|
      \leq n+3,\\ 
      \gamma^n, &\text{ if } v\notin S_{u^*} \text{ and } 2 \leq |v|
      \leq n-1,\\ 
      0,& \text{ in any other case.}
    \end{cases}
  \]
  It can be checked that
  \[
    \max\{ |\gamma^{n-1}(2n)- \gamma^n(n-1)|, |\gamma^n (2n+1) -
    \gamma^{n-1}(2n)|, | \gamma^n - \gamma^{n-1}(2n)|, |\gamma^n| \}=
    \gamma^n (2n+1) - \gamma^{n-1}(2n)
  \]
  and hence $\| B^n h_{n} \|= \gamma^n (2n+1) -
  \gamma^{n-1}(2n)$, which  together with
  Proposition \ref{prop:Bn} gives
  \[
    \| B^n \|=\gamma^n (2n+1) - \gamma^{n-1}(2n),
  \]
  as desired.
\end{proof}

Observe that, in the previous proof, the function $h_{n}$ is
also in $\Lip_0$. Hence we also obtain
\begin{theorem}\label{th:Bn0}
Let $T$ be a rooted homogeneous tree of order $\gamma$ and let $B$ be the
backward shift on $\Lip_0$. Then
\[
  \| B^n \| = \begin{cases}
    n+1,& \text{ if } \gamma=1, \\
    (2n+1)\gamma^n - (2n) \gamma^{n-1}, & \text{ if } \gamma \geq 2.
  \end{cases}
\]
\end{theorem}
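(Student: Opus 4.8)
The plan is to deduce Theorem~\ref{th:Bn0} immediately from Theorem~\ref{th:Bn} and, more precisely, from the argument used to prove it. First I would record that $\Lip_0$ is a closed subspace of $\Lip$ which is invariant under $B$ (both facts were established earlier in this section), so the backward shift acting on $\Lip_0$ is simply the restriction of $B$ on $\Lip$ to this invariant subspace. Consequently
\[
  \norm{B^n}_{\Lip_0} \leq \norm{B^n}_{\Lip} \leq \max\{ (2n+1)\gamma^n - 2n\gamma^{n-1},\ (n+1)\gamma^n \},
\]
the second inequality being exactly Proposition~\ref{prop:Bn}, whose proof only uses that $f \in \Lip$. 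As in the proof of Theorem~\ref{th:Bn}, the right-hand side equals $n+1$ when $\gamma=1$ and $(2n+1)\gamma^n - 2n\gamma^{n-1}$ when $\gamma\geq 2$.

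For the matching lower bound, the key observation is that the test function $h_n : T \to \C$ constructed in the proof of Theorem~\ref{th:Bn} already belongs to $\Lip_0$. Indeed, inspecting the displayed formula for $h_n'$ there, one sees that $h_n'(v)=0$ for every vertex $v$ with $|v| > 2n+3$; in particular $h_n'$ vanishes outside a bounded set, so $\lim_{|v|\to\infty} h_n'(v)=0$ and hence $h_n \in \Lip_0$ with $\norm{h_n}=1$. The computation of $(B^n h_n)'$ carried out in that proof then applies verbatim, yielding $\norm{B^n h_n}=n+1$ when $\gamma=1$ and $\norm{B^n h_n}=(2n+1)\gamma^n - 2n\gamma^{n-1}$ when $\gamma\geq 2$. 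Combining this with the upper bound above gives the claimed equality.

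I do not anticipate any real obstacle here: the only points to verify are that $B$ leaves $\Lip_0$ invariant (already proved), that the upper-bound estimate in Proposition~\ref{prop:Bn} is insensitive to whether the function lies in $\Lip$ or in the subspace $\Lip_0$, and the one-line check that $h_n'$ has bounded support. Everything else is quoted directly from the homogeneous-tree case on $\Lip$, so the proof of Theorem~\ref{th:Bn0} is essentially a corollary of Theorem~\ref{th:Bn}.
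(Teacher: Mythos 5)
Your proposal is correct and is exactly the paper's argument: the paper simply remarks, after proving Theorem~\ref{th:Bn}, that the test function $h_n$ used there already lies in $\Lip_0$ (its derivative vanishes for $|v|>2n+3$), and combines this with the upper bound of Proposition~\ref{prop:Bn}, which passes to the invariant closed subspace $\Lip_0$. Nothing is missing.
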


\section{Spectrum of $B$ on Homogeneous Trees}\label{sec_spectrum}

In this section, we compute the spectrum of $B$ for both the Lipschitz
and the little Lipschitz space in the case where $T$ is a rooted homogeneous
tree. First, we obtain part of the set of eigenvalues. We will show
later that we actually have an equality.

\begin{theorem}\label{th:sigmapB}
Let $T$ be a rooted homogeneous tree of order $\gamma$. If $B$ is the
backward shift on $\Lip$, then
\[
\{ \lambda \in \C \, : \, |\lambda| \leq \gamma \}\subseteq \sigma_{\operatorname{p}}(B).
\]
If $B$ is the backward shift on $\Lip_0$, then
\[
\{ \lambda \in \C \, : \, |\lambda|< \gamma \} \cup \{ \gamma \} \subseteq \sigma_{\operatorname{p}}(B).
\]
\end{theorem}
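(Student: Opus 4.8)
The plan is to exhibit, for each admissible $\lambda$, an explicit \emph{radial} eigenvector, i.e.\ a function depending only on the distance to the root. Since the tree is homogeneous of order $\gamma$, every vertex $v$ satisfies $\gamma(v)=\gamma$ and each of its children $w$ has $|w|=|v|+1$; so if $f(v)=a_{|v|}$ depends only on $|v|$, then $(Bf)(v)=\gamma\, a_{|v|+1}$. The equation $Bf=\lambda f$ therefore collapses to the scalar recursion $\gamma a_{n+1}=\lambda a_n$, whose solution is $a_n=(\lambda/\gamma)^n a_0$. This observation is essentially the whole proof; what remains is bookkeeping for the boundary cases.

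First I would treat $\lambda\neq 0$. Set $f_\lambda(v):=(\lambda/\gamma)^{|v|}$; this is never zero, and a one-line computation using $\gamma(v)=\gamma$ gives $(Bf_\lambda)(v)=\sum_{w\in\Chi{v}}(\lambda/\gamma)^{|v|+1}=\lambda(\lambda/\gamma)^{|v|}=\lambda f_\lambda(v)$, so $Bf_\lambda=\lambda f_\lambda$ holds as an identity in $\calF$. Next I would compute the derivative: $f_\lambda'(\root)=1$ and, for $v\in T^*$, $f_\lambda'(v)=(\lambda/\gamma-1)(\lambda/\gamma)^{|v|-1}$. Hence $\sup_{v}|f_\lambda'(v)|$ is finite exactly when $|\lambda/\gamma|\le 1$, i.e.\ when $|\lambda|\le\gamma$; in that range $f_\lambda\in\Lip$ and so $\{\lambda:0<|\lambda|\le\gamma\}\subseteq\sigma_{\operatorname{p}}(B)$ on $\Lip$. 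For $\lambda=0$ the same role is played by $\chi_{\{\root\}}$, which lies in $\Lip_0\subseteq\Lip$ and satisfies $B\chi_{\{\root\}}=0$ (the only child-sum that could be nonzero is at $\root$, but the children of $\root$ lie at level $1$, where $\chi_{\{\root\}}$ vanishes). This finishes the $\Lip$ statement.

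For the little Lipschitz space, observe that $f_\lambda'(v)\to 0$ as $|v|\to\infty$ precisely when $|\lambda/\gamma|<1$, i.e.\ $|\lambda|<\gamma$ (again $\chi_{\{\root\}}$ covers $\lambda=0$), so $\{\lambda:|\lambda|<\gamma\}\subseteq\sigma_{\operatorname{p}}(B)$ on $\Lip_0$. The remaining point $\lambda=\gamma$ is handled by the same formula: there $f_\gamma$ is the constant function $\mathbf 1$, whose derivative equals $1$ at the root and $0$ elsewhere, hence $\mathbf 1\in\Lip_0$, and trivially $B\mathbf 1=\gamma\mathbf 1$. Thus $\{\lambda:|\lambda|<\gamma\}\cup\{\gamma\}\subseteq\sigma_{\operatorname{p}}(B)$ on $\Lip_0$.

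I do not expect a genuine obstacle; the only subtlety is to resist over-claiming on the circle $|\lambda|=\gamma$: there $f_\lambda'$ has constant modulus $|\lambda/\gamma-1|$, which is nonzero unless $\lambda=\gamma$, so in $\Lip_0$ exactly the single point $\lambda=\gamma$ survives from that circle — matching the statement — while the whole closed disk survives in $\Lip$. A minor point of care is the convention $(\lambda/\gamma)^0=1$ at the root, and the clean separate treatment of $\lambda=0$ avoids any awkwardness with $0^{|v|}$.
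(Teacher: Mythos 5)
Your proof is correct and follows essentially the same route as the paper: the paper also uses the radial eigenvector $f_\lambda(v)=(\lambda/\gamma)^{|v|}$, computes $f_\lambda'(v)=(\lambda/\gamma)^{|v|-1}(\lambda/\gamma-1)$, and reads off membership in $\Lip$ and $\Lip_0$ exactly as you do. Your separate treatment of $\lambda=0$ via $\chi_{\{\root\}}$ is a minor extra care the paper leaves implicit, not a different argument.
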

\begin{proof}
Define $f_\lambda \in \calF$ as $f_\lambda(v)=
(\lambda/\gamma)^{|v|}$. Then,
\[
(Bf_\lambda)(v)=\sum_{w \in \Chi{v}} f_\lambda(w)= \sum_{w \in
  \Chi{v}} (\lambda/\gamma)^{|w|} = \gamma (\lambda/\gamma)^{|v|+1} =
(\lambda f_\lambda)(v).
\]
For $v \in T^*$ we have
\[
  f_\lambda'(v)=
  (\lambda/\gamma)^{|v|}-(\lambda/\gamma)^{|\parent{v}|}=(\lambda/\gamma)^{|v|-1}
  (\lambda/\gamma -1).
\]
Hence, $f_\lambda \in \Lip$ if and only if $|\lambda|\leq \gamma$ and
$f_\lambda \in \Lip_0$ if and only if $|\lambda| < \gamma$ or
$\lambda=\gamma$. The result now follows immediately.
\end{proof}

With this, we are ready to prove the following theorem.

\begin{theorem}\label{th:sigmaB_L}
Let $T$ be a rooted homogeneous tree of order $\gamma$. If $B$ is the
backward shift on $\Lip$, then
\[
\sigma(B)=\sigma_{\operatorname{ap}}(B)=\sigma_{\operatorname{p}}(B)=
\{ \lambda \in \C \, : \, |\lambda| \leq \gamma \}.
\]
\end{theorem}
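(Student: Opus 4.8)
The plan is to combine the inclusion already obtained in Theorem~\ref{th:sigmapB} with an upper bound on the spectral radius coming from the norm computation in Theorem~\ref{th:Bn}. Since $\sigma_{\operatorname{p}}(B) \subseteq \sigma_{\operatorname{ap}}(B) \subseteq \sigma(B)$ always holds, and Theorem~\ref{th:sigmapB} gives $\{\lambda \in \C : |\lambda| \leq \gamma\} \subseteq \sigma_{\operatorname{p}}(B)$, it suffices to prove the reverse inclusion $\sigma(B) \subseteq \{\lambda \in \C : |\lambda| \leq \gamma\}$; all three sets will then be squeezed to equality.

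To get $\sigma(B) \subseteq \{\lambda : |\lambda| \leq \gamma\}$ I would invoke the spectral radius formula $r(B) = \lim_{n\to\infty} \|B^n\|^{1/n}$ and show $r(B) \leq \gamma$ (in fact $= \gamma$, but $\leq$ is all that is needed). Using Theorem~\ref{th:Bn}: in the case $\gamma = 1$ we have $\|B^n\| = n+1$, so $\|B^n\|^{1/n} = (n+1)^{1/n} \to 1 = \gamma$; in the case $\gamma \geq 2$ we have $\|B^n\| = (2n+1)\gamma^n - 2n\gamma^{n-1} = \gamma^n\bigl((2n+1) - 2n/\gamma\bigr)$, so
\[
\|B^n\|^{1/n} = \gamma \left( (2n+1) - \frac{2n}{\gamma} \right)^{1/n} \xrightarrow[n\to\infty]{} \gamma,
\]
since the quantity in parentheses grows only polynomially in $n$ and hence its $n$-th root tends to $1$. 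In either case $r(B) = \gamma$, so $\sigma(B) \subseteq \overline{\D}(0,\gamma) = \{\lambda : |\lambda| \leq \gamma\}$.

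Putting the two inclusions together yields
\[
\{\lambda : |\lambda| \leq \gamma\} \subseteq \sigma_{\operatorname{p}}(B) \subseteq \sigma_{\operatorname{ap}}(B) \subseteq \sigma(B) \subseteq \{\lambda : |\lambda| \leq \gamma\},
\]
forcing all four to coincide, which is exactly the claimed identity. There is essentially no obstacle remaining here: the genuine work was already done in establishing the eigenvalue inclusion (Theorem~\ref{th:sigmapB}) and the exact value of $\|B^n\|$ (Theorem~\ref{th:Bn}); the present theorem is just the bookkeeping that assembles these via the spectral radius formula. The only point to be mildly careful about is recording that $\overline{\D}(0,\gamma)$ is closed (so that the limit of eigenvalues argument is not even needed — the inclusion $\sigma(B) \subseteq \overline{\D}(0,r(B))$ is immediate) and that the spectral radius formula applies to any bounded operator on a Banach space, which $B$ is by Corollary~\ref{cor:boundB}.
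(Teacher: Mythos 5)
Your proposal is correct and follows exactly the paper's own argument: compute $r(B)=\gamma$ from the norm formula in Theorem~\ref{th:Bn} via the spectral radius formula, then squeeze $\{\lambda : |\lambda|\leq\gamma\}\subseteq\sigma_{\operatorname{p}}(B)\subseteq\sigma_{\operatorname{ap}}(B)\subseteq\sigma(B)\subseteq\{\lambda : |\lambda|\leq\gamma\}$ using Theorem~\ref{th:sigmapB}. There is nothing to add.
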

\begin{proof}
First, we will compute the spectral radius of $B$. By Theorem
\ref{th:Bn} if $\gamma=1$ then
\[
  r(B)=\lim_{n \to \infty} \| B^n \|^{1/n}=\lim_{n \to \infty}
  (n+1)^{1/n}=1=\gamma.
\]
If $\gamma \geq 2$, Theorem \ref{th:Bn} gives
\[
  r(B)=\lim_{n \to \infty} \| B^n \|^{1/n}=\lim_{n \to \infty}
  ((2n+1)\gamma^n -(2n)\gamma^{n-1})^{1/n}= \lim_{n \to \infty}
  \gamma^{(n-1)/n} ((2n+1)\gamma-2n)^{1/n} =  \gamma.
\]
Therefore, $\sigma(B)\subseteq \{ \lambda \in \C \, : \, |\lambda|
\leq \gamma \}$. This and the previous theorem imply that
\[
  \{ \lambda \in \C \, : \, |\lambda| \leq \gamma \} \subseteq
  \sigma_{\operatorname{p}}(B) \subseteq \sigma_{\operatorname{ap}}(B) \subseteq \sigma(B) \subseteq
  \{ \lambda \in \C \, : \, |\lambda| \leq \gamma \},
\]
and hence the result follows.
\end{proof}

We obtain a similar result for the backward shift on $\Lip_0$.

\begin{theorem}\label{th:sigmaB_L0}
Let $T$ be a rooted homogeneous tree of order $\gamma$. If $B$ is the
backward shift on $\Lip_0$, then
\[
\sigma(B)=\sigma_{\operatorname{ap}}(B)= \{ \lambda \in \C \, : \, |\lambda| \leq \gamma \}.
\]
\end{theorem}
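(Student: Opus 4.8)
The plan is to run exactly the same bookkeeping argument as in the proof of Theorem~\ref{th:sigmaB_L}, replacing the norm and eigenvalue inputs by their $\Lip_0$ counterparts, and then to use one extra general fact (the boundary of the spectrum is contained in the approximate point spectrum) to recover the part of the boundary circle that is no longer point spectrum.

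First I would compute the spectral radius of $B$ on $\Lip_0$ from Theorem~\ref{th:Bn0}: since $\|B^n\| = n+1$ when $\gamma = 1$ and $\|B^n\| = (2n+1)\gamma^n - 2n\gamma^{n-1}$ when $\gamma \geq 2$, in either case $r(B) = \lim_{n\to\infty}\|B^n\|^{1/n} = \gamma$, exactly as computed in the proof of Theorem~\ref{th:sigmaB_L}. Hence $\sigma(B) \subseteq \{\lambda \in \C : |\lambda| \leq \gamma\}$. Next I would invoke Theorem~\ref{th:sigmapB}, which gives $\{\lambda : |\lambda| < \gamma\} \cup \{\gamma\} \subseteq \sigma_{\operatorname{p}}(B) \subseteq \sigma(B)$. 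Because $\sigma(B)$ is closed and the closure of the open disk $\{|\lambda| < \gamma\}$ is the closed disk $\{|\lambda| \leq \gamma\}$, these two inclusions already force $\sigma(B) = \{\lambda : |\lambda| \leq \gamma\}$.

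For the approximate point spectrum I would use the standard fact $\partial\sigma(B) \subseteq \sigma_{\operatorname{ap}}(B)$ (e.g. \cite[Prop.~6.7]{Conway}). Here $\partial\sigma(B) = \{\lambda : |\lambda| = \gamma\}$, so the whole circle of radius $\gamma$ lies in $\sigma_{\operatorname{ap}}(B)$; combining this with $\{\lambda : |\lambda| < \gamma\} \subseteq \sigma_{\operatorname{p}}(B) \subseteq \sigma_{\operatorname{ap}}(B)$ yields $\{\lambda : |\lambda| \leq \gamma\} \subseteq \sigma_{\operatorname{ap}}(B) \subseteq \sigma(B) = \{\lambda : |\lambda| \leq \gamma\}$, and hence equality throughout.

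There is essentially no hard step here: all the substantive content was already packaged in Theorems~\ref{th:Bn0} and~\ref{th:sigmapB}. The only subtlety worth pointing out is why the statement drops the claim $\sigma_{\operatorname{p}}(B) = \{|\lambda| \leq \gamma\}$ that held in the $\Lip$ case (Theorem~\ref{th:sigmaB_L}): the eigenfunctions $f_\lambda(v) = (\lambda/\gamma)^{|v|}$ are \emph{not} in $\Lip_0$ when $|\lambda| = \gamma$ with $\lambda \neq \gamma$, so those boundary points enter $\sigma(B)$ only through the boundary-of-spectrum argument rather than as eigenvalues. Pinning down the point spectrum precisely is deferred to Theorem~\ref{th:sigmap_L0}, so for the present statement I would not attempt to say anything more about $\sigma_{\operatorname{p}}(B)$ on the circle.
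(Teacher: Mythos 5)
Your proposal is correct and follows essentially the same route as the paper: spectral radius $\gamma$ from Theorem~\ref{th:Bn0} gives the upper inclusion, Theorem~\ref{th:sigmapB} plus closedness of the spectrum gives the lower one, and $\partial\sigma(B)\subseteq\sigma_{\operatorname{ap}}(B)$ supplies the boundary circle for the approximate point spectrum. The only difference is that you make explicit the closure step that the paper leaves implicit.
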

\begin{proof}
As was the case in the theorem above, by Theorem \ref{th:Bn0} we have
$r(B)=\gamma$ and hence
\[
  \sigma(B)\subseteq \{ \lambda \in \C \, : \, |\lambda| \leq \gamma
  \}.
\]
This, and Theorem \ref{th:sigmapB} give that
\[
  \sigma(B) =  \{ \lambda \in \C \, : \, |\lambda| \leq \gamma
  \}.
\]
Since, for any operator $A$ we have $\partial \sigma(A) \subseteq \sigma_{ap}(A)$
(see, e.g. \cite[p.~210]{Conway}), we have that
\[
  \{ \lambda \in \C \, : \, |\lambda| = \gamma \} \subseteq \sigma_{ap}(B),
\]
and, again, by Theorem \ref{th:sigmapB}, we obtain
\[
  \sigma_{ap}(B) =  \{ \lambda \in \C \, : \, |\lambda| \leq \gamma \},
\]
which completes the proof.
\end{proof}

With the previous result showing what the spectrum of the backward
shift is, we can determine the point spectrum.

\begin{theorem}\label{th:sigmap_L0}
Let $T$ be a rooted homogeneous tree of order $\gamma$. If $B$ is the backward shift on $\Lip_0$, then
\[
\sigma_{\operatorname{p}}(B)= \{ \lambda \in \C \, : \, |\lambda|< \gamma \} \cup \{ \gamma \}.
\]
\end{theorem}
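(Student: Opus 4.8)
The plan is as follows. By Theorem~\ref{th:sigmapB} we already have the inclusion $\{\lambda\in\C:|\lambda|<\gamma\}\cup\{\gamma\}\subseteq\sigma_{\operatorname{p}}(B)$, and by Theorem~\ref{th:sigmaB_L0} we have $\sigma_{\operatorname{p}}(B)\subseteq\sigma(B)=\{\lambda\in\C:|\lambda|\le\gamma\}$. Hence it only remains to show that if $|\lambda|=\gamma$ and $\lambda\neq\gamma$, then $\lambda$ is not an eigenvalue of $B$ on $\Lip_0$. Fix such a $\lambda$; note $\lambda\neq 0$ since $\gamma\ge 1$, and set $\mu:=\lambda/\gamma$, so that $|\mu|=1$ and $\mu\neq 1$. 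Suppose, for contradiction, that $f\in\Lip_0\setminus\{0\}$ satisfies $Bf=\lambda f$. The radial eigenfunction $f_\lambda(v):=\mu^{|v|}$ from the proof of Theorem~\ref{th:sigmapB} never vanishes, so we may define $h\in\calF$ by $h(v):=\mu^{-|v|}f(v)$; thus $f=f_\lambda h$ and $|h(v)|=|f(v)|$ for every $v$.

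I would first record two consequences of the hypotheses. From $(Bf)(v)=\sum_{w\in\Chi{v}}\mu^{|w|}h(w)=\mu^{|v|+1}\sum_{w\in\Chi{v}}h(w)$ and $(\lambda f)(v)=\gamma\mu^{\,|v|+1}h(v)$ we see that $Bf=\lambda f$ is equivalent to the mean value property $\sum_{w\in\Chi{v}}h(w)=\gamma\,h(v)$ for every $v\in T$. Also, for $v\in T^*$ one computes $(f_\lambda h)'(v)=\mu^{|v|}h(v)-\mu^{|v|-1}h(\parent{v})=\mu^{|v|-1}\bigl(\mu h(v)-h(\parent{v})\bigr)$, so $|f'(v)|=|\mu h(v)-h(\parent{v})|$; since $f\in\Lip_0$, this forces $\mu h(v)-h(\parent{v})\to 0$ as $|v|\to\infty$. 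Now I would sum this last relation over the $\gamma$ children of a vertex $v$: using the mean value property,
\[
\sum_{w\in\Chi{v}}\bigl(\mu h(w)-h(v)\bigr)=\mu\gamma h(v)-\gamma h(v)=\gamma(\mu-1)h(v),
\]
while the left-hand side is a sum of $\gamma$ terms each tending to $0$ as $|v|\to\infty$, hence tends to $0$. Since $\gamma(\mu-1)\neq 0$, we conclude $h(v)\to 0$ as $|v|\to\infty$.

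To finish, I would iterate the mean value property: by induction $\sum_{w\in\nChi{v}{n}}h(w)=\gamma^n h(v)$ for every $v\in T$ and $n\in\N$, and since $\nChi{v}{n}$ has exactly $\gamma^n$ elements, $|h(v)|\le\max_{w\in\nChi{v}{n}}|h(w)|\le\sup_{|w|\ge|v|+n}|h(w)|$, which tends to $0$ as $n\to\infty$ by the previous step. Hence $h\equiv 0$, so $f=f_\lambda h\equiv 0$, contradicting $f\neq 0$. This proves $\sigma_{\operatorname{p}}(B)\cap\{|\lambda|=\gamma\}=\{\gamma\}$, and combined with the inclusions from the first paragraph it yields the claimed equality. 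I do not anticipate a genuine obstacle: the argument becomes transparent once one passes to $h=f/f_\lambda$, which turns the eigenvalue equation into a mean value property and the membership $f\in\Lip_0$ into the condition $\mu h(v)-h(\parent{v})\to 0$. The only step deserving a little care is deducing $h\equiv 0$ from $h(v)\to 0$: this must come from the exact averaging identity over $n$-children — available because the tree is homogeneous, so $|\nChi{v}{n}|=\gamma^n$ — rather than from any maximum principle.
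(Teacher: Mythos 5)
Your proof is correct, and the two key identities it rests on are exactly the ones the paper uses; what differs is the logical organization, which is essentially the reverse of the paper's. Your single-step identity $\gamma(\mu-1)h(v)=\sum_{w\in\Chi{v}}(\mu h(w)-h(\parent{w}))$ is, after multiplying by $\mu^{|v|}$, the paper's identity $(\lambda-\gamma)f(v)=\sum_{w\in\Chi{v}}f'(w)$, and your iterated averaging $\sum_{w\in\nChi{v}{n}}h(w)=\gamma^n h(v)$ is the paper's $B^nf=\lambda^n f$ written out. The paper normalizes $f(w^*)=1$, uses the averaging identity to find, at every level below $w^*$, a descendant with $|f|\geq 1$, and then gets a contradiction from the single-step identity at one sufficiently deep such vertex. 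You run the same two steps in the opposite order: the single-step identity (applied at all deep vertices) gives $|f(v)|=|h(v)|\to 0$ at infinity, and the averaging identity then propagates this back to $h\equiv 0$. The conjugation by $f_\lambda$ is cosmetic here since $|\mu|=1$ implies $|h|=|f|$, but it does make the structure transparent by turning the eigenvalue equation into a mean-value property; the paper's version is a contradiction argument localized at one vertex, while yours is a direct global vanishing argument. Both are complete; neither has a gap.
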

\begin{proof}
By Theorems \ref{th:sigmapB} and \ref{th:sigmaB_L0}, it suffices to
show that if $\lambda \neq \gamma$ and $|\lambda|=\gamma$, then
$\lambda \notin \sigma_p(B)$. So let $\lambda \neq \gamma$ with
$|\lambda|=\gamma$  and assume then that $Bf=\lambda f$ for a nonzero
$f \in \Lip_0$.

First, since $f$ is not zero, there exists a vertex $w^*$ such that $f(w^*)\neq
0$. Dividing by a constant, if necessary, we may assume that $f(w^*)=1$

Now, we claim that for all $n \in \N$ there exists $v \in
\nChi{w^*}{n}$ with $|f(v)|\geq 1$. Indeed, suppose this was not the
case. Then, for some $m \in \N$ we would have $|f(v)|< 1$ for all $v
\in \nChi{w^*}{m}$. But since $B^m f = \lambda^m f$, we have
\[
\lambda^m f(w^*)=\sum_{v \in \nChi{w^*}{m}} f(v)
\]
and hence we obtain
\[
|\lambda|^m = |\lambda^m f(w^*)| \leq \sum_{v \in \nChi{w^*}{m}}
|f(v)| < \gamma^m 1 = |\lambda|^m,
\]
which is a contradiction, so the claim is true.

Now, since $f \in \Lip_0$, there exists $N \in \N$ such that, for all
$|v|\geq N$ we have
\[
  |f'(v)|=|f(v)-f(\parent{v})| < \frac{|\gamma - \lambda|}{2\gamma}
\]
By the claim, there exists $u^* \in \nChi{w^*}{N}$ with $|f(u^*)|\geq
1$. Hence
\[
  \sum_{u \in \Chi{u^*}} (f(u) - f(\parent{u})) = (Bf)(u^*) - \gamma
  f(u^*) = (\lambda- \gamma) f(u^*). 
\]
But then,
\[
  |\lambda- \gamma| \,  |f(u^*)| \leq   \sum_{u \in \Chi{u^*}} |f(u) -
  f(\parent{u})| < \gamma \frac{|\gamma - \lambda|}{2\gamma},
\]
since every $u \in \Chi{u^*}$ satisfies $|u|>N$. But the last display
implies that $|f(u^*)| < \frac12$, which is a contradiction. Hence
there cannot be $\lambda \neq \gamma$ with $|\lambda|=\gamma$ and
$Bf=\lambda f$ for a nonzero $f \in \Lip_0$, which completes the proof
of the theorem.
\end{proof}

\section{Hypercyclicity}\label{sec_hyper}

In \cite{CoMA1}, it is shown that $\Lip$ (with an equivalent norm) is
not separable, while $\Lip_0$ is separable (this was originally shown
in \cite{CoEa1}). So, in order to study hypercyclicity of operators,
we need to restrict ourselves to $\Lip_0$, which we do from now on.

First, we get rid of the question of whether $S$ is hypercyclic. It is
not since the norm of $S$ is one and therefore $S$ can never be
hypercyclic. We offer an alternative proof.

\begin{theorem}
Let $T$ be a rooted, countably infinite and locally finite tree and let $S$ be the forward
shift on $\Lip_0$. Then $S$ is not hypercyclic.
\end{theorem}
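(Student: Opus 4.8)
The plan is to give the ``alternative proof'' promised just before the statement, which uses only the definition of $S$ rather than the fact that $\norm{S}\le 1$. The starting point is the observation that $(Sg)(\root)=0$ for \emph{every} $g\in\calF$. Hence, for any $f\in\Lip_0$ and any $n\ge 1$, the iterate $S^nf=S(S^{n-1}f)$ vanishes at the root, so the orbit $\{f,Sf,S^2f,\dots\}$ is contained in the set $\{f\}\cup\calM_0$, where $\calM_0:=\{g\in\Lip_0 \, : \, g(\root)=0\}$.

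Next I would verify that $\calM_0$ is a closed, proper linear subspace of $\Lip_0$. It is a linear subspace because evaluation at $\root$ is linear; it is closed because that evaluation is a bounded functional on $\Lip_0$, since $|g(\root)|=|g'(\root)|\le\norm{g}$ for every $g$; and it is proper because the constant function $c\equiv 1$ lies in $\Lip_0$ (its ``derivative'' is supported at the root, hence tends to $0$) while $c(\root)=1\ne 0$. In particular $\Lip_0\setminus\calM_0$ is a nonempty open set, so it contains more than one point and is certainly not reduced to $\{f\}$.

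Finally I would conclude: since $\calM_0$ is closed, the set $\{f\}\cup\calM_0$ is closed, and by the previous remark it is a proper subset of $\Lip_0$. Therefore the closure of the orbit of $f$ under $S$ is contained in a proper closed subset of $\Lip_0$, so $f$ cannot be a hypercyclic vector; as $f$ was arbitrary, $S$ is not hypercyclic. There is no genuine obstacle in this argument: the only point requiring (minimal) care is the boundedness of evaluation at the root with the present norm $\norm{f}=\sup_{v}|f'(v)|$, which is immediate from $f'(\root)=f(\root)$. For completeness I would also record the even shorter remark that $\norm{S}\le 1$ forces $\norm{S^nf}\le\norm{f}$ for all $n$, so every orbit lies in a ball of radius $\norm{f}$ and hence cannot be dense in the infinite-dimensional space $\Lip_0$.
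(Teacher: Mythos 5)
Your argument is correct and rests on exactly the same two facts as the paper's proof: that $(S^n f)(\root)=0$ for all $n\geq 1$, and that evaluation at the root is bounded because $|g(\root)|=|g'(\root)|\leq \|g\|$. The paper packages this as the concrete statement that the orbit cannot come within $\tfrac12$ of $\chi_{\{\root\}}$, while you phrase it as the orbit lying in the proper closed set $\{f\}\cup\{g : g(\root)=0\}$; this is only a cosmetic repackaging of the same proof.
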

\begin{proof}
If $S$ were hypercyclic, then there would exist $f \in
\Lip_0$ and a natural number $N$ such that
\[
\| S^{N} f - \chi_{\{\root\}} \| < \frac{1}{2},
\]
where $\chi_{\{\root\}}$ is the characteristic function of the root
$\root$. The definition of the norm in $\Lip$ then would imply that
\[
  | (S^{N} f)(\root) - 1 | \leq  \| S^{N} f - \chi_{\{\root\}} \| < \frac{1}{2}.
\]
But, since $(S^n f)(\root)=0$ for every $n \in \N$, this is a
contradiction. Therefore $S$ is not hypercyclic. 
\end{proof}

We will use the following lemma, which is proved in \cite{CoMA2}.

\begin{lemma}
Let $X$ be the set of all functions in $\Lip_0$ with finite support.
Then $X$ is dense in $\Lip_0$.
\end{lemma}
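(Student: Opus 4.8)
The plan is to realise every $f \in \Lip_0$ as a limit, in the $\Lip$-norm, of finitely supported functions obtained by keeping $f$ on a large ball and then ``tapering'' it linearly down to zero. Fix $f \in \Lip_0$ and $\epsilon>0$. Since $f'(v)\to 0$ as $|v|\to\infty$, first choose $N$ with $|f'(v)|<\epsilon$ whenever $|v|>N$, and put $M:=N+1$. Every vertex $v$ with $|v|\geq M$ lies in a unique sector $S_u$ with $|u|=M$, so these sectors partition $\{v:|v|\geq M\}$, the vertex $u$ being the element of $S_u$ at level $M$. Pick a parameter $K\in\N$ to be specified later, and define $g$ by: $g(v)=f(v)$ for $|v|\leq M$; $g(w)=f(u)\,\frac{M+K-|w|}{K}$ when $w\in S_u$, $|u|=M$ and $M\leq|w|\leq M+K$; and $g(v)=0$ otherwise. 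Since $T$ is locally finite, the ball $\{|v|\leq M+K\}$ is finite, so $g$ has finite support; as its derivative is then also finitely supported, $g\in\Lip_0$, hence $g\in X$.

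Next I would estimate $\|f-g\|=\sup_{v}|f'(v)-g'(v)|$ level by level. On $\{|v|\leq M\}$ the functions $f$ and $g$ agree, including at $\root$, so the difference is $0$ there. For $|v|>M+K$ one has $g(v)=g(\parent{v})=0$ — using that the taper formula already gives $0$ at level $M+K$ — hence $g'(v)=0$ and $|f'(v)-g'(v)|=|f'(v)|<\epsilon$. On the taper region $M<|v|\leq M+K$, letting $u$ denote the level-$M$ ancestor of $v$, a direct computation from the defining formula (with $|v|=M+1$, where $\parent{v}=u$ and $g(u)=f(u)$, as a special case) yields $g'(v)=-f(u)/K$, so $|f'(v)-g'(v)|\leq|f'(v)|+|f(u)|/K<\epsilon+|f(u)|/K$.

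The crux is to make $|f(u)|/K$ small uniformly over the finitely many vertices $u$ at level $M$, and this is exactly where Lemma~\ref{le:co_ea} enters: $|f(u)|\leq(|u|+1)\|f\|=(M+1)\|f\|$, a bound independent of $u$ and of $K$. Hence taking $K>(M+1)\|f\|/\epsilon$ (trivial if $f\equiv 0$) gives $|f(u)|/K<\epsilon$ and therefore $\|f-g\|\leq 2\epsilon$. Since $\epsilon>0$ was arbitrary, $X$ is dense in $\Lip_0$.

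The obstacle that forces this ``tapering'' rather than a blunt truncation is that members of $\Lip_0$ need not be bounded — by Lemma~\ref{le:co_ea} they may grow linearly in $|v|$ — so setting $g=f$ on a ball and $g=0$ outside it would create a jump of modulus up to $(M+1)\|f\|$ at the boundary. Spreading the descent over $K$ levels replaces that single jump by $K$ increments of size $|f(u)|/K$, which the growth bound lets us make uniformly small by choosing $K$ large. I expect the only genuinely delicate bookkeeping to be the two boundary levels $|v|=M+1$ and $|v|=M+K+1$ of the taper; the construction is otherwise unaffected even if some sector $S_u$ is finite and the taper never reaches $0$, since the relevant increments are still $-f(u)/K$ and the ``outer'' region is simply empty on that branch.
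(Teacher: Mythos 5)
Your argument is correct and complete: the linear taper over $K$ levels, combined with the growth bound $|f(u)|\leq(M+1)\|f\|$ from Lemma~\ref{le:co_ea} to control the increment $|f(u)|/K$ uniformly over the finitely many level-$M$ vertices, gives $\|f-g\|\leq 2\epsilon$ exactly as you claim, and you correctly handle the root, the two boundary levels of the taper, and branches that terminate early. Note that the paper does not prove this lemma itself but cites it from \cite{CoMA2}, so there is no in-paper argument to compare against; your proof stands on its own, and your diagnosis of why a blunt truncation fails (functions in $\Lip_0$ may grow linearly) is precisely the point that makes the tapering necessary.
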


The following definition will be useful to characterize hypercyclicity.

\begin{defi}
Let $T$ a rooted tree and $v \in T$. We say that $S_v$ is a {\em free end (at
  $v$)} if for all $w \in S_v$ we have $\gamma(w)=1$.
\end{defi}

\begin{figure}[!htbp]
\centering
\begin{tikzpicture}[level 1/.style={sibling distance=2.5cm}, level 2/.style={sibling distance=1.5cm}, scale=0.7,emph/.style={edge from parent/.style={black, dashed,draw}}]
\node[draw,circle] {$\root$} [grow'=down]
child {node[draw,circle]  {}
    child{node[draw,circle] {}
      child{node[draw,circle] {}
      child[emph]{}}
        child{node[draw,circle] {}
          child{node[draw,circle] {}
          child[emph]{}}
        child{node[draw,circle] {}
           child[emph]{}}
            }
         }  
         child{node[draw,circle]{}
         child[emph]{} }
  } 
child {node[draw,circle]  {}
  child {node[draw,circle] {}
  child[emph]{}}
    child {node[arn_r, draw,circle] {$v$}
        child{node[arn_r, draw,circle] {}
            child{node[arn_r, draw,circle] {}
                child{node[arn_r, draw,circle] {}
                    child[emph]{}
                }
            }
        }
     } 
};
\end{tikzpicture}
\caption{Free end starting at the vertex $v$.}
\end{figure}

Recall that $T^n$ denotes the set of vertices that have $n$-parents;
i.e., $v \in T^n$ if there exists $u \in T$ with $v \in
\nChi{u}{n}$. Also, recall that $\gamma(u,n)$ denotes the number of
vertices in the set $\nChi{u}{n}$.

We define the function  $\beta: T \times \N \to \R$ as
\[
  \beta(v,n)=
  \begin{cases}
    \frac{1}{\gamma(\nparent{v}{n},n)}, & \text{ if } v \in T^n, \\
    0,  & \text{ if } v \notin T^n.
    \end{cases}
\]
The following lemma will be used later.
\begin{lemma}\label{le:beta}
  Let $T$ be a rooted, countably infinite and locally finite tree. If $T$ is homogeneous
  by sectors and has no free ends then
  \[
    \sup_{w \in T} \beta(w,n) \to 0 \quad \text{ as } n \to \infty.
  \]
 \end{lemma}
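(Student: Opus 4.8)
The plan is to convert the statement into a \emph{uniform} lower bound on the number of $n$-children. First I would observe that, as $w$ ranges over $T^n$, its $n$-parent $\nparent{w}{n}$ ranges exactly over the vertices $u$ with $\nChi{u}{n}\neq\varnothing$, and for such $w$ one has $\beta(w,n)=1/\gamma(\nparent{w}{n},n)$; since $\beta(w,n)=0$ for $w\notin T^n$, this gives
\[
  \sup_{w\in T}\beta(w,n)=\sup\Bigl\{\tfrac{1}{\gamma(u,n)}\ :\ u\in T,\ \gamma(u,n)\geq 1\Bigr\}.
\]
Hence it suffices to find $n_0$ and a sequence $\rho(n)\to\infty$ so that $\gamma(u,n)\geq\rho(n)$ for every $n\geq n_0$ and every $u$ with $\gamma(u,n)\geq 1$.

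The key input is the following remark. Let $N$ be a level at which $T$ is homogeneous by sectors. If $|v|=N$ and the sector $S_v$ has more than one vertex, then $S_v$ is homogeneous of order $\gamma(v)\geq 2$: indeed $\gamma(v)=0$ would force $S_v=\{v\}$, and $\gamma(v)=1$ would make $S_v$ a free end, both excluded. Now fix $n\geq N+1$ and a vertex $u$ with $\gamma(u,n)\geq 1$, and pick $w\in\nChi{u}{n}$, so $|w|=|u|+n$. Let $v$ be the ancestor of $w$ at level $\max\{|u|,N\}$. Then $v$ is a descendant of $u$ (or equal), $|v|\geq N$, and $|v|<|w|$ because $|u|+n>\max\{|u|,N\}$; in particular $S_v$ is nontrivial, so it is homogeneous of order $\gamma(v)\geq 2$. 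Setting $k:=|w|-|v|=|u|+n-\max\{|u|,N\}\geq n-N\geq 1$, every element of $\nChi{v}{k}$ lies at level $|u|+n$ and below $u$, hence is an $n$-child of $u$, so
\[
  \gamma(u,n)\ \geq\ \gamma(v,k)\ =\ \gamma(v)^{k}\ \geq\ 2^{\,n-N}.
\]
Therefore $\sup_{w\in T}\beta(w,n)\leq 2^{-(n-N)}$ for all $n\geq N+1$, which tends to $0$ as $n\to\infty$, proving the lemma.

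The only point requiring a little care is the level bookkeeping that identifies an $n$-child of $u$ with a $k$-child of the intermediate vertex $v$, together with checking that $v$ lies strictly above $w$ (so that $S_v$ is nontrivial and the no-free-ends hypothesis applies); both are immediate once $n>N$, so I do not anticipate any genuine obstacle. It is worth noting that homogeneity by sectors is used essentially here: without it one can construct trees with no free ends in which some vertex has a bounded number of $n$-children for every $n$, so the conclusion would fail.
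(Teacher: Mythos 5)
Your proof is correct and follows essentially the same route as the paper's: both exploit that, past the homogeneity level $N$, every nontrivial sector is homogeneous of some order at least $2$ (free ends and singleton sectors being excluded), so any vertex having an $n$-child has at least $2^{\,n-N}$ of them, giving $\sup_{w}\beta(w,n)\le 2^{-(n-N)}$. Your single reduction to the ancestor at level $\max\{|u|,N\}$ replaces the paper's three-way case split on $|w|$, and as a small bonus it only ever invokes the order-$\ge 2$ bound for sectors that demonstrably contain more than one vertex.
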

 \begin{proof}
Since $T$ is homogeneous by sectors, there exists $M \in \N$ such that
for every $v \in T$ with $|v|=M$, we have $\gamma(v)=\gamma(u)$
for every $u \in S_v$. Since $T$ is locally finite, there exist
finitely many such $v$, say $v_1, v_2, \dots, v_r$. For each $j=1, 2,
\dots, r$, define $\mu_j:=\gamma(v_j)$. Since $T$ has no free ends, $\mu:=\min\{ \mu_1,
\mu_2, \dots, \mu_r \} \geq 2$.

Let $w\in T$ and let $n \geq 2 M$.

\begin{itemize}
\item If $|w|<n$, then $w \notin T^n$ and hence
  \[
    \beta(w,n)=0.
  \]
  
\item If $|w|\geq M+n$, then $|\nparent{w}{n}| \geq M$ and hence
  $\gamma(\nparent{w}{n},{n}) \geq \mu^n$. Therefore,
  \[
    \beta(w,n) \leq \frac{1}{\mu^n}.
  \]
  
\item If $n \leq |w| < M+n$, let $k=|w|$. Then, since $k-M < n$, we
  have $\nChi{\nparent{w}{k-M}}{k-M} \subseteq
  \nChi{\nparent{w}{n}}{n}$ and hence
  \[
    \gamma(\nparent{w}{k-M},k-M) \leq \gamma(\nparent{w}{n},n).
  \]
  But clearly $\mod{\nparent{w}{k-M}}=M$ which implies that
  $\gamma(\nparent{w}{k-M},k-M) \geq \mu^{k-M}$ and hence $\beta(w,n)
  \leq \frac{1}{\mu^{k-M}}$. Since $k >n$ this gives
  \[
    \beta(w,n) \leq \frac{1}{\mu^{n-M}}
  \]  
\end{itemize}
Hence, for every $w \in T$, we have
  \[
    \beta(w,n) \leq \frac{1}{\mu^{n-M}}
  \]
  if $n > 2M$. Therefore, if $n > 2M$, then
  \[
    \sup_{w \in T} \beta(w,n) \leq \frac{1}{\mu^{n-M}}
  \]
and thus  
  \[
    \sup_{v \in T} \beta(v,n) \to 0 \quad \text{ as } n \to \infty. \qedhere
  \]
\end{proof}

We can now give a sufficient condition for hypercyclicity of $B$.

\begin{theorem}
Let $T$ be an countably infinite and locally finite tree and assume that $B$ is
bounded on $\Lip_0$. If $T$ has no free ends, then $B$ is hypercyclic.  
\end{theorem}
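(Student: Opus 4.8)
The plan is to apply the Hypercyclicity Criterion (Theorem~\ref{th:hyp_cri}). Since $B$ is bounded on $\Lip_0$, Theorem~\ref{th:main} shows that $T$ is homogeneous by sectors; combined with the hypothesis that $T$ has no free ends, this puts us exactly in the setting of Lemma~\ref{le:beta}, so $\sup_{w\in T}\beta(w,n)\to 0$ as $n\to\infty$. As the dense subset required by the Criterion I would take $X$, the space of finitely supported functions in $\Lip_0$, which is dense by the lemma recalled above.

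First I would dispatch the condition $B^n f\to 0$ for $f\in X$: if the support of $f$ is contained in $\{v:|v|\le D\}$, then $(B^nf)(v)=\sum_{w\in\nChi{v}{n}}f(w)=0$ as soon as $n>D$, since every $w\in\nChi{v}{n}$ has $|w|=|v|+n>D$; thus $B^nf=0$ eventually. Next I would define the maps $R_n\colon X\to\Lip_0$ which ``undo'' $n$ applications of $B$ by distributing the mass at each vertex evenly among its $n$-children:
\[
(R_nf)(w)=\beta(w,n)\,f\big(\nparent{w}{n}\big)\quad\text{if }w\in T^n,\qquad (R_nf)(w)=0\quad\text{otherwise.}
\]
Because $T$ is locally finite and $f$ has finite support, $R_nf$ is finitely supported, hence lies in $\Lip_0$. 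Since for each $w\in\nChi{v}{n}$ we have $\nparent{w}{n}=v$ and $\beta(w,n)=1/\gamma(v,n)$, a direct computation gives
\[
(B^nR_nf)(v)=\sum_{w\in\nChi{v}{n}}(R_nf)(w)=\gamma(v,n)\cdot\frac{f(v)}{\gamma(v,n)}=f(v),
\]
so in fact $B^nR_nf=f$ for every $n$, which is more than the third bullet of the Criterion requires.

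The remaining condition, $R_nf\to 0$ in $\Lip_0$, is the heart of the matter. Writing $\|g\|=\sup_{v}|g(v)-g(\parent{v})|\le 2\sup_{w}|g(w)|$ and using $|(R_nf)(w)|\le\beta(w,n)\max_{v\in T}|f(v)|$ (the maximum being finite because $f$ has finite support), I obtain
\[
\|R_nf\|\;\le\;2\Big(\max_{v\in T}|f(v)|\Big)\sup_{w\in T}\beta(w,n)\;\longrightarrow\;0
\]
by Lemma~\ref{le:beta}. All three hypotheses of the Hypercyclicity Criterion then hold, and therefore $B$ is hypercyclic.

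The step I expect to be the main obstacle is precisely this last one: showing that the (approximate) right inverses $R_nf$ shrink to zero. This is exactly the place where one needs to know that $n$-children proliferate along every branch, which is the content of Lemma~\ref{le:beta}, and it is precisely here that the hypothesis ``no free ends'' enters — along a free end $\gamma(\cdot,n)$ stays bounded, so $\beta(\cdot,n)$ does not tend to $0$ and $R_nf$ cannot be made small. Everything else is bookkeeping with the identity $(B^nf)(v)=\sum_{w\in\nChi{v}{n}}f(w)$ and the density of finitely supported functions in $\Lip_0$.
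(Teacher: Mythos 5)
Your proposal is correct and follows essentially the same route as the paper: the Hypercyclicity Criterion applied to the dense set of finitely supported functions, with the same averaging right inverses $R_n$ built from $\beta(\cdot,n)$, and with Lemma~\ref{le:beta} doing the real work in showing $R_nf\to 0$. Your bound $\|R_nf\|\le 2\sup_w|(R_nf)(w)|\le 2\bigl(\max_v|f(v)|\bigr)\sup_w\beta(w,n)$ just streamlines the paper's case-by-case estimate of $(R_nf)'(v)$ and yields the same conclusion.
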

\begin{proof}
To show hypercyclicity of $B$, we will use the Hypercyclicity
Criterion (Theorem \ref{th:hyp_cri}). Let $X$ be the set of all
functions with finite support and for each $n \in \N$, define the
function $R_n: X \to \Lip$ as
\[
  (R_n f)(v) =\begin{cases}
    \beta(v,n) f(\nparent{v}{n}), & \text{ if } v \in T^n, \text{ and } \\
    0, & \text{ if } v \notin T^n.
  \end{cases}
\]
(Observe that $R_n f$ also has finite support.)

\begin{enumerate}
\item First, let $f \in X$. Choose $N \in \N$ such that $f(v)=0$ for
  all $v$ with $|v|\geq N$. Then, for all $v \in T$,
  \[
    (B^nf)(v)=\sum_{w \in \nChi{v}{n}} f(w) =0
  \]
  if $n> N$ and, similarly, for all $v \in T^*$ we have
  $(B^nf)(\parent{v})=0$ if $n>N$. Hence $(B^nf)'=0$ as long as $n >
  N$. Therefore $B^n f \to 0$, as $n \to \infty$, as desired.

\item Let $f \in X$ and $v \in T$. Since $f$ is of finite support,
  there exists $M >0$ such that $|f(v)|\leq M$ for all $v \in T$.
\begin{itemize}
  \item If $|v|<n$, then $(R_n f)(v)=0$; while if $0<|v|<n$, then $(R_n
  f)(\parent{v})=0$. Hence $(R_n f)'(v)=0$ if $|v|<n$.
\item If $|v|=n$, then, since $\parent{v} \notin T^n$, then
  \begin{align*}
   |(R_n f)'(v)|
    & = | (R_n f)(v) - (R_n f)(\parent{v}) | \\
    & = \beta(v,n) |f(\nparent{v}{n})|  \\
    & \leq M \beta(v,n).
  \end{align*}
  
  \item If $|v| > n$. Then,
    \begin{align*}
      |(R_n f)'(v)|
      & = | (R_n f)(v) - (R_n f)(\parent{v})| \\
    & \leq \beta(v,n) |f(\nparent{v}{n})| + \beta(\parent{v},n)
    |f(\nparent{v}{n+1})| \\
    & \leq M (\beta(v,n) + \beta(\parent{v},n)).
  \end{align*}
\end{itemize}
  Therefore, for all $v \in T$, we have
  \[
      |(R_n f)'(v)| \leq 2M \sup_{v \in T} \beta(v,n)
  \]
  and hence
  \[
    \| R_n f \| \leq 2M \sup_{v \in T} \beta(v,n).
  \]
    
Therefore, since $T$ has no free ends, by Lemma \ref{le:beta} (since
$B$ is bounded and hence $T$ is homogeneous by sectors) we have that
$ R_n f  \to 0$, as desired.

\item Now, let $v \in T$. We then have
  \begin{equation*}
    \begin{split}
      (B^n (R_n f))(v)
      & = \sum_{w \in \nChi{v}{n}} (R_n f)(w) \\
      & = \sum_{w \in \nChi{v}{n}} \frac{1}{\gamma(\nparent{w}{n},n)}  f(\nparent{w}{n}) \\
      & = \sum_{w \in \nChi{v}{n}} \frac{1}{\gamma(v,n)}  f(v) \\
      & = f(v).
    \end{split}
  \end{equation*}
  Therefore, $B^n R_n f \to f$ as $n \to \infty$, as desired.
\end{enumerate}
Since all conditions in the hyperciclicity criterion hold, it follows
that $B$ is hypercyclic.
\end{proof}

The following theorem shows that the condition on the previous theorem
actually characterizes hypercyclicity.

\begin{theorem}
Let $T$ be an countably infinite and locally finite tree and assume that $B$ is
bounded on $\Lip_0$. If $B$ is hypercyclic, then $T$ has no free ends.
\end{theorem}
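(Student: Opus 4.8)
The plan is to prove the contrapositive: assuming $B$ is bounded on $\Lip_0$, if $T$ has a free end then $B$ is not hypercyclic. Suppose $S_v$ is a free end and enumerate its vertices as $v=w_0, w_1, w_2, \dots$, where each $w_{k}$ is the unique child of $w_{k-1}$ (existence and uniqueness follow from $\gamma(w)=1$ for all $w\in S_v$). The key structural observation is that $B^n$ acts rigidly along a free end: since $w_k$ has exactly one $n$-child, namely $w_{k+n}$, we have $(B^n f)(w_k)=f(w_{k+n})$ for every $f\in\calF$, every $k\ge 0$ and every $n\in\N$. Consequently, for $k\ge 1$,
\[
  (B^n f)'(w_k)=(B^n f)(w_k)-(B^n f)(w_{k-1})=f(w_{k+n})-f(w_{k+n-1})=f'(w_{k+n}).
\]

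Next I would fix an arbitrary $f\in\Lip_0$ and exploit that $f'(w_j)\to 0$. This yields $\sup_{k\ge 1}\mod{(B^n f)'(w_k)}=\sup_{j\ge n+1}\mod{f'(w_j)}\to 0$ as $n\to\infty$, so the ``free-end part'' of the derivative of $B^n f$ dies out. Then I take the test function $g:=\chi_{\{w_1\}}\in\Lip_0$ (finitely supported functions lie in $\Lip_0$); since $w_1$ has a parent, $g'(w_1)=1$, and therefore for every $n$,
\[
  \norm{B^n f - g}\ge \mod{(B^n f)'(w_1)-g'(w_1)}\ge 1-\mod{f'(w_{1+n})},
\]
and the right-hand side tends to $1$. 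Thus no subsequence of the orbit $\{f, Bf, B^2f,\dots\}$ converges to $g$, so $f$ is not a hypercyclic vector; since $f$ was arbitrary, $B$ is not hypercyclic, which proves the theorem by contraposition.

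There is no serious obstacle here: the argument is short once one localizes to the free end and tracks the sequence of derivatives $(f'(w_j))_{j\ge 1}$, along which $B$ acts exactly like the norm-one unweighted backward shift on a tail, which cannot be hypercyclic. The only items worth spelling out are the identity $(B^n f)(w_k)=f(w_{k+n})$ (an immediate induction, or a one-line count of the $n$-children inside a free end) and the fact that $g=\chi_{\{w_1\}}$ genuinely belongs to $\Lip_0$. The boundedness hypothesis on $B$ is used only to guarantee that each $B^n f$ is again an element of $\Lip_0$, so that speaking of the orbit and of hypercyclicity is meaningful.
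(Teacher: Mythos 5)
Your proposal is correct and is in essence the paper's own argument: the paper also localizes to the free end, picks a vertex $v^*$ there with $\gamma(v^*)=\gamma(\parent{v^*})=1$ (your $w_1$), uses the same identity that $B^N$ acts as a pure shift along the free end so that $(B^Nf)'(v^*)=f'(w_{N})$ for the unique $N$-child $w_N$ of $v^*$, and tests against the characteristic function of that vertex. The only real difference is how the contradiction is closed: the paper first replaces $f$ by a hypercyclic vector of norm less than $\tfrac12$ (using density of the set of hypercyclic vectors) and then needs just one $N$ with $\|B^Nf-\chi_{\{v^*\}}\|<\tfrac12$, whereas you keep $f$ arbitrary and instead use the little-Lipschitz decay $f'(w_j)\to 0$ to force $\liminf_{n\to\infty}\|B^nf-\chi_{\{w_1\}}\|\geq 1$. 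Your variant avoids invoking the density of hypercyclic vectors, at the price of one small point you should make explicit: the inequality $\|B^nf-g\|\geq 1-|f'(w_{1+n})|$ only rules out approximation by terms with large index, so you should add that consequently the ball of radius $\tfrac12$ about $g$ contains at most finitely many orbit points $B^nf$, and a finite set cannot be dense in a nonempty open subset of $\Lip_0$; with that sentence the proof is complete.
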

\begin{proof}
  Asumme that $T$ has a free end. Let $v^*$ be a vertex on the free
  end such that $\gamma(v^*)=1$ and $\gamma(\parent{v^*})=1$.  Then,
  for each $n \in \N$ each of the sets $\nChi{v^*}{n}$ and $\nChi{\parent{v^*}}{n}$ has a
  unique element. 

Since $B$ is hypercyclic there exists a hypercyclic vector $f$. In
fact, by the density of the hypercyclic vectors, we may assume that
$\| f \| < \frac{1}{2}$. Let $\chi_{\{v^*\}} \in\Lip_0$ be the
characteristic funtion of $v^*$. By hypercyclicity of $B$, there
exists $N \in \N$,  such that
\[
  \| B^N f - \chi_{\{v^*\}} \| < \frac{1}{2}.
\]
But then
\begin{align*}
  \| B^N f - \chi_{\{v^*\}} \|
  &\geq  \mod{ (B^N f)'(v^*) - \chi_{\{v^*\}}'(v^*) } \\
  &= \mod{ \sum_{w \in \nChi{v^*}{N}} f(w) -  \sum_{w \in
    \nChi{\parent{v^*}}{N}} f(w) - 1 } \\
  &= \mod{ f(w_N) - f(\parent{w_N}) - 1 },
\end{align*}

where $w_N$ is the unique element in the set $\nChi{v^*}{N}$. Hence,
\[
  \mod{ f(w_N) -f(\parent{w_N}) - 1 } < \frac{1}{2}
\]
and therefore
\[
  \frac{1}{2} <  \mod{ f(w_N) -f(\parent{w_N}) }.
\]
But, since $\| f \| < \frac{1}{2}$, we have
\[
  \mod{ f(w_N) -f(\parent{w_N}) } < \frac{1}{2},
\]
which is a contradiction. Therefore, $T$ cannot have free ends.
\end{proof}

We summarize the previous two theorems to obtain a full
characterization of the hypercyclicity of $B$.

\begin{theorem}\label{th:main_hyper}
Let $T$ be a rooted, countably infinite and locally finite tree and assume that $B$ is
bounded on $\Lip_0$. Then $B$ is hypercyclic if and only if $T$ has no free ends.
\end{theorem}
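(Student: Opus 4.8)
The plan is to note that Theorem~\ref{th:main_hyper} is precisely the conjunction of the two theorems that immediately precede it, so that the proof reduces to citing them. One of those theorems supplies the implication that if $T$ has no free ends (and $B$ is bounded on $\Lip_0$) then $B$ is hypercyclic; the other supplies the converse, that if $B$ is hypercyclic then $T$ has no free ends. Putting the two together yields the asserted equivalence, with nothing left to verify.

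For orientation it is worth recalling where the content of those two results lies, since that is where any difficulty resides. The direction ``no free ends $\Rightarrow$ hypercyclic'' is obtained by invoking the Hypercyclicity Criterion (Theorem~\ref{th:hyp_cri}) with $X$ the dense set of finitely supported elements of $\Lip_0$ and the approximate right inverses
\[
  (R_n f)(v) = \begin{cases} \beta(v,n)\, f(\nparent{v}{n}), & v \in T^n,\\ 0, & v \notin T^n. \end{cases}
\]
The decisive point is Lemma~\ref{le:beta}: boundedness of $B$ forces $T$ to be homogeneous by sectors (Theorem~\ref{th:main}), and the absence of free ends bounds the branching from below, so that $\sup_{w\in T}\beta(w,n)\to 0$; this makes $R_n f \to 0$, while $B^n f \to 0$ because $f$ has finite support, and $B^n R_n f = f$ identically. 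The direction ``hypercyclic $\Rightarrow$ no free ends'' is contrapositive: if there is a free end at $v^*$, then $\nChi{v^*}{n}$ and $\nChi{\parent{v^*}}{n}$ are singletons for every $n$, so a hypercyclic vector $f$ with $\|f\|<\tfrac12$ approximating $\chi_{\{v^*\}}$ to within $\tfrac12$ in the $\Lip_0$ norm would force $|f(w_N)-f(\parent{w_N})-1|<\tfrac12$, contradicting $\|f\|<\tfrac12$.

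Thus the real obstacles — constructing the maps $R_n$ and establishing the decay in Lemma~\ref{le:beta} on the positive side, and exploiting the rigidity of a free end on the negative side — have already been surmounted in the preceding two theorems. The present proof is therefore essentially a single line: the equivalence follows at once by combining them.
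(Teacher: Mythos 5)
Your proposal is correct and matches the paper exactly: the paper gives no separate argument for Theorem~\ref{th:main_hyper}, simply presenting it as the conjunction of the two immediately preceding theorems, which is precisely what you do. Your orienting summary of where the real work lies (the Hypercyclicity Criterion with the maps $R_n$ and Lemma~\ref{le:beta} for one direction, the rigidity of a free end for the other) accurately reflects the paper's proofs of those two results.
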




\begin{thebibliography}{24}
\bibitem{ACE1} R.~F. Allen, F. Colonna and G.~R. Easley, {\em 
Multiplication Operators between Lipschitz-type Spaces on a Tree}
Int.~J.~Math.~Math.~Sci. (2011) Art. ID 472495, 36 pp.

\bibitem{ACE2} R.~F. Allen, F. Colonna and G.~R. Easley, {\em 
Multiplication Operators on the Iterated Logarithmic Lipschitz Spaces of 
a Tree}, Mediterr.~J.~Math.~{\bf  9} (2012) 575--600.

\bibitem{ACE3} R.~F. Allen, F. Colonna and G.~R. Easley, {\em
    Multiplication Operators on the Weighted Lipschitz Space of a
    Tree}, J.~Operator Theory {\bf 69} (2013) 209--231.

\bibitem{ACE4} R.~F. Allen, F. Colonna and G.~R. Easley, {\em
    Composition Operators on the Lipschitz Space of a Tree},
  Mediterr.~J.~Math.~{\bf 11} (2014) 97--108.

\bibitem{BaMa} F. Bayart and \'E. Matheron, {\em
    Dynamics of Linear Operators}, Cambridge University Press,
  Cambridge, 2009.

\bibitem{birkhoff} G.~D.~Birkhoff, {\em D\'emonstration d'un
    th\'eor\`eme \'el\'ementaire sur les fonctions enti\`eres},
  C. R. Acad. Sci. Paris {\bf 189} (1929) 473--475
  
\bibitem{cartier1} P. Cartier, {\em Fonctions harmoniques sur un
    arbre}, in Symposia Mathematica, Vol. IX (Convegno di Calcolo
  delle Probabilit\`a, INDAM, Rome, 1971) Academic Press, London, 1972.

\bibitem{cartier2} P. Cartier, {\em G\'eom\'etrie et analyse sur
    les arbres}, in S\'eminaire Bourbaki, 24\`eme ann\'ee (1971/1972),
  Exp. No. 407, pp. 123--40. Lecture Notes in Math., Vol. 317,
  Springer, Berlin, 1973.

\bibitem{CoCo1} J.~M. Cohen and F. Colonna, {\em The Bloch Space of a
    Homogeneous Tree}, Bol. Soc. Mat. Mexicana (2), {\bf 37} (1992) 63--82.

\bibitem{CoCo2} J.~M. Cohen and F. Colonna, {\em Embeddings of Trees
    in the Hyperbolic Disk}, Complex Variables Theory Appl.~{\bf 24}
  (1994) 311--335.

\bibitem{CoEa1}  F. Colonna and G.~R. Easley, {\em Multiplication
    Operators on the Lipschitz Space of a Tree}, Integral Equations
  Operator Theory {\bf 68} (2010) 391--411.

\bibitem{CoEa2} F. Colonna and G.~R. Easley, {\em Multiplication
    Operators between the Lipschitz Space and the Space of Bounded
    Functions on a Tree}, Mediterr.~J.~Math.~{\bf 9} (2012) 423--438.

\bibitem{CoMA1} F. Colonna and R. A. Mart\'inez-Avenda\~no, {\em
    Some Classes of Operators with Symbol on the Lipschitz Space of
    a Tree}, Mediterr. J. Math. {\bf 14} (2017) 1, Art. 18, 25.

\bibitem{CoMA2} F. Colonna and R. A. Mart\'inez-Avenda\~no, {\em
    Composition Operators on the Little Lipschitz Space of a Tree.} Preprint.
  
\bibitem{Conway} John B.~Conway, {\em A Course in Functional
    Analysis}, second edition, Springer-Verlag, 1990.

\bibitem{FlJa} Richard J.~Fleming and James E. Jamison, {\em
    Isometries on Banach spaces: function spaces}, Chapman \& Hall/CRC
  Monographs and Surveys in Pure and Applied Mathematics, 129, Chapman
  \& Hall/CRC, Boca Raton, FL, 2003.

\bibitem{GEP} Karl G.~Grosse-Erdmann and Alfred Peris-Manguillot, {\em
    Linear Chaos}, Springer, London, 2006.

\bibitem{halmos} Paul R. Halmos, {\em A Hilbert space problem book},
    Springer-Verlag, New York--Berlin, 1982.

\bibitem{JJS} Z.~J.~Jab\l o\'nski, I.~B.~Jung and J.~Stochel, {\em
    Weighted Shifts on Directed Trees}, Mem.~Amer.~Math.~Soc.~{\bf 216}
  (2012), no. 1017.

\bibitem{maclane} G.~R.~MacLane, {\em Sequences of Derivatives and
    Normal Families}, J.~Analyse Math.~{\bf 2}
(1952/53) 72--87.
  
\bibitem{MA} R.~A.~Mart\'inez-Avenda\~no, {\em Hypercyclicity of
    Shifts on Weighted $\Lp$ Spaces of Directed Trees},
  J.~Math.~Anal.~Appl. {\bf 446} (2017) 823--842.

\bibitem{rolewicz} S.~Rolewicz, {\em On Orbits of Elements},
  Studia Math.~{\bf 32} (1969) 17--22.

\bibitem{shields} A.~Shields, {\em Weighted Shift Operators and
    Analytic Function Theory}, in Topics in operator theory,
  pp. 49--128. Math. Surveys, No. 13, Amer. Math. Soc., Providence,
  R.I., 1974.

\bibitem{zhu} K.~Zhu, {\em Operator Theory in Function Spaces},
  Second Edition, American Mathematical Society, Providence, RI, 2007.
  
\end{thebibliography}
\end{document}